\theoremstyle{plain}
\newtheorem{theorem}{\textbf{Theorem}}[section]
\newtheorem{lemma}{\textbf{Lemma}}[section]
\newtheorem{proposition}{\textbf{Proposition}}[section]
\newtheorem{remark}{\textbf{Remark}}[section]
\newtheorem{definition}{\textbf{Definition}}[section]
\newcommand{\bt}[0]{\bar\theta}
\newcommand{\bart}[0]{\bar{\theta}}
\newcommand{\te}[0]{\theta  }
\newcommand{\Ltv}[0]{L^2(0,T;H^1_0)}
\newcommand{\dep}[0]{\delta^\eps_{u(t)}}
\def\R3{\mathbb{R}^3} 
\def\R{\mathbb{R}}
\def\F2o{\overline{F_2}}
\def \E{\mathcal E}
\def \eps{\varepsilon}
\newcommand{\beq}[0]{\begin{equation}}
\newcommand{\eeq}[0]{\end{equation}}
\newcommand{\mint}[0]{\int_\Omega}
\newcommand{\tint}[0]{\int_0^t}
\newcommand{\Tint}[0]{\int_0^T}
\newcommand{\dert}[0]{\frac{d}{dt}}
\newcommand{\EE}[0]{{\mathcal E}}
\newcommand{\vc}[1]{\mathbf{#1}}
\newcommand{\mt}[1]{\mathsf{#1}}
\newcommand{\Zn}[0]{Zn\textsubscript{45}Au\textsubscript{30}Cu\textsubscript{25}}
\newcommand{\meas}[0]{\mathscr{L}^3}
\DeclareMathOperator{\supp}{supp}
\DeclareMathOperator{\tr}{tr}
\DeclareMathOperator{\argmin}{argmin}
\DeclareMathOperator{\Div}{Div}
\DeclareMathOperator*{\esssup}{ess\,sup}
\newcommand{\tow}[0]{\rightharpoonup}
\def\blfootnote{\gdef\@thefnmark{}\@footnotetext}
\begin{document}

\title{A model for the evolution of highly reversible martensitic transformations}

\author{
{\sc Francesco Della Porta}\\
Mathematical Institute, University of Oxford\\
Oxford OX2 6GG, UK\\
\textit{francesco.dellaporta@maths.ox.ac.uk\footnote{Current institution: Max-Planck Institute for Mathematics in the Sciences, Leipzig, Germany, 
\textit{Francesco.DellaPorta@mis.mpg.de}}}
}

\date{\today}
\maketitle

\begin{abstract}
In this work we introduce a new system of partial differential equations as a simplified model for the evolution of reversible martensitic transformations under thermal cycling in low hysteresis alloys. The model is developed in the context of nonlinear continuum mechanics, where the developed theory is mostly static, and cannot capture the influence of dynamics on martensitic microstructures. First, we prove existence of weak solutions; secondly, we study the physically relevant limit when the interface energy density vanishes, and the elastic constants tend to infinity. The limit problem provides a framework for the moving mask approximation recently introduced by the author. In the last section we study the limit equations in a one-dimensional setting. After closing the equations with a constitutive relation between the phase interface velocity and the temperature of the one-dimensional sample, the equations become a two-phase Stefan problem with a kinetic condition at the free boundary. Under some further assumptions, we show that the phase interface reaches the domain boundary in finite time. % showing that the , which, as a corollary, leads to uniqueness.
\end{abstract}

%\title{\large\textbf{Study of curved interfaces in materials satisfying the cofactor conditions}}

%%%%%%%%%%%%%%%%%%%%%%%%%%%%%%%%%%%%%%%%%
%%%%   INTRODUCTION
%%%%%%%%%%%%%%%%%%%%%%%%%%%%%%%%%%%%%%%%%
\begin{section}{Introduction}
Martensitic transformations are solid-to-solid phase transitions, an abrupt change in the crystalline structure occurring in certain alloys or ceramics when their temperature crosses a critical value $\theta_T$. The high-temperature phase is usually called austenite, the low-temperature one martensite. These transformations are important, for example, in shape-memory alloys, which are materials able to recover, upon heating, deformations that are apparently plastic. Shape memory alloys have a wide range of applications, from medical stents to smart actuators. However, damages arising from multiple thermal cycles, deteriorating reversibility of the transformation, are still major obstacles to their use.\\
A previous study, identified some conditions of geometric compatibility between austenite and martensite, called cofactor conditions, and speculated they might affect reversibility \cite{JamesHyst}. This conjecture was partially confirmed with the discovery of Zn\textsubscript{45}Au\textsubscript{30}Cu\textsubscript{25} \cite{JamesNew}. This is the first material closely satisfying the cofactor conditions, with a relative error of order $10^{-4}$, which shows no loss of reversibility of the transformation after more than 16000 thermal cycles. This result is quite surprising if compared with other materials, where nucleation of micro-crack deteriorates reversibility, and rupture occurs after only a few thousands cycles (see e.g., \cite{Chluba}). However, a rigorous mathematical understanding of reversibility and its connections with the cofactor conditions is still missing.\\
An important step towards understanding reversibility, is to fully characterise the martensitic microstructures arising during the transformation. These have been successfully studied in the context of nonlinear continuum mechanics, where martensitic transformations are treated as elastic deformations \cite{BallJames1,BallJames2,Batt}. This theory relies on minimising an energy functional in order to predict the local deformations of the crystalline structure. As proven in \cite{MullerSverak}, we often find ourselves in the presence of infinitely many minimisers, with the drawback of not being able to discriminate which ones are physically relevant.\\
In a previous study, in order to address this problem, based on experimental observations with Zn\textsubscript{45}Au\textsubscript{30}Cu\textsubscript{25}, we introduced the moving mask approximation \cite{FDP2}. This, under suitable assumptions, enabled characterisation of the macroscopic deformation gradients for martensite as the ones of the form
$$
\mt 1 + \vc a(\vc x) \otimes \vc n(\vc x), \quad\text{a.e. $\vc x$}, 
$$
for some $\vc a, \vc n \in L^\infty(\Omega; \R^3)$. This result, which would not have been achievable in a static framework, turned out to be in accordance with experiments on Zn\textsubscript{45}Au\textsubscript{30}Cu\textsubscript{25} \cite{XC,FDP2,JamesNew}, and appears relevant for many other alloys. \\
In order to frame the moving mask approximation in a dynamical model in the context of continuum mechanics, with the aim of better understanding the complex microstructures arising in Zn\textsubscript{45}Au\textsubscript{30}Cu\textsubscript{25}, in this paper we introduce a simplified model to describe the evolution of thermally induced martensitic phase transitions with ultra-low thermal hysteresis. {Here and below, by thermal hysteresis (or simply hysteresis) we mean $\frac12(A_s+A_f-M_s-M_f)$, $A_s,A_f$ (resp. $M_s,M_f$) being the temperatures where austenite (resp. martensite) starts to appear and ends to appear during thermally induced martensitic transformations. This is consistent with the way hysteresis is measured in the literature when the phase transformation is induced by locating the sample on a thermostat and by varying its temperature (see e.g., \cite{JamesHyst,JamesMuller}).} Low hysteresis hence means that the transformation takes place in a small range of temperature values. This is the case, for example, for Zn\textsubscript{45}Au\textsubscript{30}Cu\textsubscript{25} where the experimental values of hysteresis are of $2\,^\circ \mathrm K$, in contrast to values up to $70\,^\circ \mathrm K$ in NiTi alloys. Our model results in a system of partial differential equations, for which we prove existence of weak solutions, and that we relate to the moving mask approximation of \cite{FDP2}.
\\
The structure of this paper is the following: in Section \ref{nonlonear elastic}, we introduce the nonlinear elasticity model to describe martensitic phase transitions. In the same section, we recall the equations describing the balance of momentum and the balance of energy in continuum mechanics. Indeed, following the work in \cite{AberKnowles,AberKnowles2}, it seems relevant for the evolution of the austenite-martensite interface to take into account thermodynamic effects. In Section \ref{potential}, we introduce the assumptions on the free-energy density of the system, and the boundary conditions, which reflect the experimental setup for \Zn \,(see \cite{JamesNew}). Furthermore, we justify the linearization of the equation for the conservation of energy around the critical temperature. Existence of solutions to the resulting system of partial differential equations is proved in Section \ref{existence full}. This proof requires the application of a version of the div-curl lemma (see e.g., \cite{All}) to show that the solutions to a suitably defined approximated problem converge to weak solutions of our problem, when the regularisation vanishes. In Section \ref{inelali} we study the limit of the solutions as the elastic constants tend to infinity and the interface energy density goes to zero. This approximation is physically relevant and has been already adopted, for example in \cite{BallChuJames} and \cite{BallKoumatosQC}. Here we prove that the deformation gradients $\nabla\vc y$ generate in the limit a gradient Young measure $\nu_{\vc x,t}$ which is supported on the phases, and whose evolution cannot be deduced from the conservation of momentum alone. 
%
%which is no longer governed by the equation for the conservation of momentum, but which is supported on the phases. 
On the other hand, the equation for the conservation of energy becomes a heat equation with a source term of the type $\dert \int_{\R^{3\times3}}\eta_1(\mt A)\,\mathrm{d}\nu_{\vc x,t}(\mt A)$, {where $\eta_1-\log\Bigl(\frac{\theta}{\theta_T}\Bigr)$ is the entropy of the system, and $\eta_1$ is supposed to be a smooth function, assuming different values in austenite and martensite}. In Section \ref{connection} we recall the moving mask approximation introduced in \cite{FDP2}, and explain how it can be framed in the context of our limit problem. In this section, we also devise a possible strategy to make the connection rigorous.\\
In the last section, we study the propagation of a laminate in a long bar, where we can simplify the problem, under some further assumptions, to a one-dimensional one. The resulting equation is an easier version of the equations introduced in \cite{AberKnowles2}, that are underdetermined, and should be closed with a constitutive relation for the velocity of the phase boundary. 
After introducing a constitutive relation between the velocity and the temperature of the phase boundary, our problem becomes a two-phase Stefan problem with kinetic condition at the free boundary (see e.g., \cite{Vis,VisLib}). We prove that, under our assumptions, the position of the austenite-martensite phase interface is a monotone function of time, which reaches the domain boundary in finite time. This simple, one-dimensional case is an example of solution satisfying the moving mask approximation to the limit problem obtained in Section \ref{inelali}. \\
\end{section}
%%%%%%%%%%%%%%%%%%%%%%%%%%%%%%%%%%%%%%%%%
%%%%   NONLINEAR ELASTICITY MODEL
%%%%%%%%%%%%%%%%%%%%%%%%%%%%%%%%%%%%%%%%%
\section{Preliminaries}

\label{nonlonear elastic}
\subsection{Nonlinear elasticity model}
One way to model martensitic phase transitions at a continuum level is in the context of nonlinear elasticity(see e.g., \cite{BallJames1} and \cite{Batt}). In this framework, one can look at phase transformation from austenite to martensite as elastic deformations minimizing the given free energy
\beq
\label{energia}
\EE(\vc y,\theta)  = \mint\phi(\nabla \vc y(\vc x),\theta)\,\mathrm d\vc x.
\eeq
Here and below the bounded smooth domain $\Omega\subset\R^3$ represents a sample in its undeformed austenite phase at the critical temperature, while $\vc y(\vc x)\in\R^3$ is the position in space occupied by the point of material $\vc x\in\Omega$ after the transformation. The temperature of the body is represented by $\theta$, while $\phi$ is the free-energy density that reflects the fact that below the critical temperature $\theta_T$ martensite is energetically preferable to austenite, while the opposite holds when $\theta>\theta_T$. Defined $\mathcal D:=\{\mt F\in\mathbb{R}^{3\times 3}\colon \det \mt F >0\}$, standard assumptions on $\phi(\cdot,\theta)\colon\mathcal D\to \mathbb{R}$ are (see e.g., \cite{BallKoumatos,FDP2})
\begin{itemize}
\item  $\phi(\cdot,\theta)$ is a function bounded below by a constant depending on $\theta$ for each $\theta>0$;
% continuous on $\mathcal D$ for every $\theta>0$;
\item $\phi(\cdot,\theta)$ satisfies frame-indifference, i.e., for all $\mt F\in\mathcal D$ and all rotations $\mt R\in SO(3)$, $\phi(\mt R \mt F,\theta)=\phi(\mt F,\theta)$. This property reflects the invariance of the free-energy density under rotations;
\item $\phi$ has cubic symmetry, i.e., for all $\mt F\in\mathcal D$ and all rotations $\mt Q$ in the symmetry group of austenite $\mathcal{P}_a$, $\phi(\mt {FQ},\theta)=\phi(\mt F,\theta)$ must hold (see \cite{Batt} for more details);
\item denoting by $K_\theta$ the set of minima for the free-energy density at temperature $\theta$, i.e., $K_\theta:=\{\mt F\in\mathcal D\colon\,\mt F\in\argmin(\phi(G,\theta))\}$,
\beq
K_\theta = 
\begin{cases}
\alpha(\theta)SO(3),\qquad &\theta>\theta_T\\ 
SO(3)\cup\bigcup_{i=1}^NSO(3)\mt U_i(\theta_T), \qquad &\theta=\theta_T\\
\bigcup_{i=1}^NSO(3)\mt U_i(\theta), \qquad &\theta<\theta_T .
\end{cases}
\eeq
Here, $\alpha(\theta)$ is a scalar dilatation coefficient satisfying $\alpha(\theta_T)=1$, while {%\color{red}
$\mt U_i(\theta)\in\R^{3\times3}_{Sym^+}$ are the $N$ positive definite symmetric matrices corresponding to the transformation from austenite to the $N$ variants of martensite at temperature $\theta$.} From now on, we omit the dependence on the temperature in $K_\theta$ when $\theta<\theta_T$, and we neglect the dependence of $\alpha$ and the $\mt U_i$ from $\theta$. {In \Zn, for example, the transformation is from a cubic to a monoclinic II lattice, and the $N=12$ variants of martensite can be found in \cite[Table 4.4]{Batt}. Our setup is however very general, and the analysis below does not rely on $N$ and on the shape of the $\mt U_i$'s. }%;
%\item interfacial energy contributions are ignored.
\end{itemize}
At the critical temperature, austenite and martensite are energetically equivalent; therefore the energy $\E$ is not quasiconvex and minimisers attaining both phases might therefore not exist. {For this reason, in the literature authors have often considered a physically relevant regularisation of $\E$ which takes into account interface energy. This is usually done by penalizing the second derivatives in the $BV$ or in the $L^2$ norm, and has shown to characterise physically relevant minimisers, and to capture many aspects of the physical phenomenon such as the finite scale of the microstructures and the branching of laminates (see e.g., \cite{BallJames1,ContiSchweizer,FDP4,MullerDolzman,KohnMuller,JamesMuller}).  } However, even if this model is more accurate, it is more difficult to capture analytically the behaviour of solutions.\\ 

On the other hand, by arguing as in \cite{BallJames1,BallJames2} it is often possible to construct minimising sequences which allow coexistence of both austenite and martensite. A way to deal with the lack of weak lower-semicontinuity of the functional is thus to consider the relaxed problem
$$
\E^r(\vc y,\nu_{\vc x}) = \mint \int_{\R^{3\times3}}\phi(\mt F,\theta_T) \,\mathrm{d}\nu_{\vc x}\,\mathrm{d}\vc x,
$$
where $\nu_{\vc x}$ is a gradient Young measure with barycentre $\nabla \vc y$ for some $\vc y\in W^{1,\infty}(\Omega;\R^3)$ (see e.g., \cite{BallJames2, Muller} and also \cite{Pedregal} regarding gradient Young measures). In this way, under mild assumptions on $\phi$ the problem always admits minimisers, and the gradient Young measure can keep track of the oscillations in the minimising sequences. Thus, $\nu_{\vc x}$ embeds the information about the microstructures, while the barycentre of $\nu_{\vc x}$, namely $\nabla\vc y (\vc x):=\int_{\R^{3\times3}}\mt F \,\mathrm{d}\nu_{\vc x}(\mt F)$, is also called macroscopic deformation gradient, because it is an average of the fine microstructures. We have $\nabla\vc y \in K^{qc}$ a.e. in $\Omega$, where
$$
K^{qc}:=\bigl\{\mt M\in \R^{3\times 3}\colon f(\mt M)\leq \max_K f,\,\text{for all quasiconvex } f\colon\R^{3\times3}\to\R \bigr\},
$$
is the quasiconvex hull of the set $K$ (see \cite{Muller}).\\

Characterising the quasiconvex hull of a generic set is an open problem which strongly affects our understanding of martensitic transformations. Nonetheless, the nonlinear elasticity model has been successfully used to understand many aspects of this phenomenon, such as twinning (see \cite{BallJames1}), shape-memory effect (see \cite{BattSme}), and, more recently, hysteresis
(see \cite{JamesMuller}).

\subsection{Dynamics of martensitic phase transitions in continuum solid mechanics}
{
Existing studies in the literature have studied the evolution of martensitic transformations at a continuum level within two different frameworks: the first one is the framework of phase field models, the second one is the context of the dynamics for nonlinear continuum mechanics. In the former case, the system is described by one (or more) order parameter representing the volume fraction of a phase (or of a martensitic variant) at each (or at almost each) point of the domain. Examples can be found in \cite{PF3,PF2,PF1} and references therein. These models have been used in the literature to capture aspects of the phenomenon like fatigue and hysteresis. However, to our knowledge, there is no analytical result stating that the compatibility between crystalline phases is preserved during the whole transformation. Furthermore, no result proves that for every (or almost every) time instant the deformation map $\vc y$, describing the change of atoms positions during the lattice transformation, is continuous. This is observed in experiments and should be reflected by the model. In the context of nonlinear elasticity, the continuity of the deformation map $\vc y$ is assured by Sobolev embeddings, and the crystal compatibility holds in some weak form on every plane in the domain (see e.g., \cite[Remark 5.1]{FDP2}). Furthermore, compatibility between crystalline phases, and continuity of the underlying crystal deformation map have been proven to be important tools to understand microstructures (see e.g., \cite{BallJames1,Batt}). For these reasons, in this manuscript we decided to focus on the nonlinear elasticity model and work within this framework, which seems better suited to describe the complex microstructures arising in \Zn.
%
%
%
%used to analytically understand, for example, the formation of laminates and more complex microstructures \cite{BallJames1,Batt}, the shape memory effect \cite{BattSme}, and more recently hysteresis \cite{JamesMuller}. In the context of nonlinear elasticity, the continuity of the deformation map $\vc y$ is assured by Sobolev embeddings and the crystal compatibility holds in some weak form on every plane in the domain (see e.g., \cite[Remark 5.1]{FDP2}). These factors are expected to be responsible for the complex microstructures observed in \Zn. For these reasons, we believe that work should be carried out to better understand the dynamics of martensitic transformations in the context of the nonlinear elasticity model, and this is the direction in which this paper aims to contribute.
}

As mentioned in the previous section, the statics of martensite to austenite phase transitions 
can be successfully modelled in the framework of nonlinear elasticity. 
For these reasons, many authors have tried to study the evolution of martensitic phase transitions within the same theory, where the conservation of linear momentum can be expressed in Lagrangian coordinates as 
\beq
\label{evolution 0}
\rho_0 \ddot {\vc y }= \Div \upsigma.%\textsf{\upshape$\sigma$}.
\eeq
Again, here $\vc y(\vc x,t)$ is the position at time $t$ of the material point $\vc x\in\Omega$, $\rho_0(\vc x)$ the density of the body in its reference configuration, and $\mt\sigma$ the Piola-Kirchhoff stress tensor, whose dependence on $\vc y$ and the deformation gradient $\mt F=\nabla \vc y$ is given in terms of the free energy $\phi$ of the material through the relation 
\beq
\label{def sigma}
\upsigma(\mt F,\theta)=\frac{\partial\phi}{\partial \mt F}(\mt F,\theta).
\eeq
However, as $\phi$ is in general not rank-one convex, the operator $\Div (\upsigma(\nabla \vc y,\theta))$ is not elliptic, making the problem extremely complex from a mathematical point of view, and the related initial value problem formally ill posed. 
Furthermore, the total energy is conserved in the model for smooth solutions, which is not coherent with physical observations, where heat is absorbed or released during the process. Therefore, one should seek solutions with shocks that, as explained for example in \cite{AberKnowles}, dissipate energy, but which are mathematically very difficult to study. %ied in more than one dimension.
%Great attention in the literature is given also to models keeping in account viscous effects, which thus consider a stress tensor
Also studied in the literature are viscoelastic models. In the case of viscoelasticity of rate type the stress tensor has the form
$$
\upsigma = \upsigma(\nabla \vc y,\nabla\dot{\vc  y},\theta),
$$ 
in \eqref{evolution 0}, or in its quasi-static variant where $\ddot{\vc y}$ is assumed to be negligible. 
%{\color{red}
However, many difficulties arise in dimension greater than one due to the fact that frame-indifference prohibits a linear dependence on $\nabla\dot{\vc y}$
% all the $\upsigma_1$ satisfying frame indifference are difficult to treat mathematically 
(see e.g., \cite{Sengul_thesis}). For further discussion, we refer the reader to \cite{BallOpen, BallPego, BallSengul, Pego, Sengul_thesis} and references therein.
%}
\\

On the other hand, the phase transition generates heat, and it is therefore reasonable, when modelling the evolution for the austenite to martensite phase transition, to take into account thermal effects. The equations of continuum mechanics expressing the balance of linear momentum and energy in the absence of heat supply and neglecting body forces are given by (see e.g., \cite{BallOpen,Truesdell})
%\beq
%\label{Pb 0}
%\begin{cases}
\begin{align*}
\frac\partial{\partial t}\Bigl(\rho_0\frac12 |\dot {\vc y}|^2+U\Bigr)-\Div(\mt C(\nabla \vc y,\theta)\nabla\theta) - \Div(\dot{\vc y}\upsigma(\nabla \vc y,\theta)) &=0,\\
\rho_0\ddot{\vc y}-\Div \upsigma(\nabla \vc y,\theta)& = \vc 0.
\end{align*}
%\end{cases}
%\eeq
Here, $U$ is the internal energy of the system, and we also made use of the Fourier law for the heat flux $\vc q$, that is $\vc q=-\mt C\nabla \theta$. For simplicity $\mt C$ will be considered to be a constant, symmetric, positive definite matrix in $\R^{3\times3}$ with smallest eigenvalue equal to $c_m$ and $\rho_0$ is assumed constant. %Furthermore, we neglect the inertial terms $\ddot{\vc  y}$, as it seems that the mechanical effects related to the phase transition, i.e., the displacements of the atoms average position, are much faster than thermal ones. This seems a common way of reasoning in the literature (see e.g., \cite{AberKnowles,AberKim,Sengul_thesis} and references therein), but unfortunately there is, up to our knowledge, no rigorous argument to explain this. 
{The term $\Div(\dot{\vc y}\upsigma(\nabla \vc y,\theta))$ in the energy balance accounts for the energy dissipated or generated by internal forces and, by using the equation for the conservation of momentum, can be rewritten as $\rho_0\frac12 \frac\partial{\partial t}|\dot {\vc y}|^2 + \nabla\dot{\vc y}:\upsigma(\nabla \vc y,\theta) $.}
Thus, after exploiting the relation $U=\phi+\theta\eta$ relating the internal energy to the entropy $\eta$ and the free energy $\phi$, the above equations become
\beq
\label{Pb 0}
\begin{split}
%\begin{cases}
%\Div (\sigma(\nabla y,\theta))&=0,\\
%\bar y\in\argmin_{y\in\mathcal K} \mathcal E(y,\theta),\\
\rho_0 \theta \dot\eta(\nabla \vc y,\theta)-\Div(\mt C\nabla\theta) &=0,\\
\rho_0\ddot{\vc y}-\Div \upsigma(\nabla\vc y,\theta)&=\vc 0,
\end{split}%\end{cases}
\eeq
where we also made use of
\beq
\label{entropy def 0}
\eta(\nabla\vc  y,\theta)=-\frac{\partial\phi}{\partial \theta}(\nabla \vc y,\theta).\eeq
%The problem is thus coupled and the entropy restricts the possible $y$.  \\

%%%%%%%%%%%%%%%%%%%%%%%%%%%%%%%%%%%%%%%%%%%%%%%%%%%%%%%%%%%
%%%%  Sezione sul modello
%\section{A simplified model for the phase boundary motion}
%\label{quasistatic model}
%%%%%%%%%%%%%%%%%%%%%%%%%%%%%%%%%%%%%%%%%%%%%%%%%%%%%%%%%%%

\section{Hypotheses, boundary conditions and approximations}
%Time evolution for a potential supported on the wells}
\label{potential}
{%\color{red}
The aim of this section is to introduce some hypotheses on the free-energy density $\phi$. Given the fact that we are restricting ourselves to elastic deformations, and that elastic moduli are in general considerable in metals, minimisers of the free energy lie extremely close to the wells. Therefore, it is a reasonable approximation to adopt in Section \ref{inelali} the approach of \cite{BallKoumatosQC}, and let $\phi$ grow to infinity outside the wells, thus neglecting the influence of the shape of $\phi(\mt F)$ for $\mt F\notin K\cup SO(3)$ on our results.\\}
{%\color{red} 
Following the approach of \cite{AberKim} and references therein, we split the energy contributions into a term, $\phi_3$ below, responsible for the increase of energy with temperature, and terms, namely $\phi_1,\phi_2$, describing the energy thermo-mechanical effects. %The former is independent of the elastic deformation and of the phase.
As hysteresis is smaller than $5\,^\circ \mathrm K$ in materials undergoing ultra-reversible martensitic transformations such as Zn\textsubscript{45}Au\textsubscript{30}Cu\textsubscript{25}, we can neglect the effects of thermal expansion, and we also neglect the heat generated by elastic deformations which are not a change of phase, which, as mentioned above, are usually observed to be small. For $k\geq 1$ we thus consider
\beq
\label{energia kk}
\phi^k(\mt F,\theta) = \phi_1(\mt F,\theta)+{k}\phi_2(\mt F)+\phi_3(\theta),
\eeq
where:
\begin{itemize}
\item $\phi_2\colon \R^{3\times3}\to [0,\infty)$ is smooth and satisfies
$$
\phi_2(\mt F) = 0 \Leftrightarrow \mt F\in K\cup SO(3),\qquad c_2(|\mt F|^{%\tilde 
p}+1)\geq\phi_2(\mt F)\geq c_0|\mt F|^p-c_1,%\qquad 
$$
with $c_0,c_1,c_2>0$ and $p\in(2,\infty)$. %, $\tilde p\in[p,\infty)$. 
When $k\gg 1$, the $\phi_2$ term is responsible for the growth of the energy outside the wells, and $k$ plays the role of an elastic constant. {An example of $\phi_2$ in the case of $N\geq 2$ martensitic variants $\mt U_i$ is given by $\phi_2(\mt F) = |\mt 1 - \mt F^T\mt F|^2\prod_{i=1}^{N}|\mt U_i^T\mt U_i - \mt F^T\mt F|^2$};
{\item 
$
\phi_3(\theta)=\gamma\theta\Bigl(1-\log\Bigl(\frac{\theta}{\theta_T}\Bigr)\Bigr),
$
arises naturally by assuming that the specific heat $\gamma>0$ is constant, and hence independent of the phase and of the temperature. Indeed, $\phi^k$ must satisfy $\gamma = -\theta\frac{\partial^2 \phi^k}{\partial\theta^2}$ (see \cite{AberKnowles2});}
\item
$\rho_0\phi_1 = \bar\phi_1(\mt F)-\theta%\frac{\theta}{\theta_T}
 \eta_1(\mt F)$, where $\bar\phi_1,\eta_1\colon \R^{3\times 3}\to\R$ are smooth and such that
\[
\begin{split}
\min_{\R^{3\times3}}\phi_1(\cdot,\theta) =   \min\Bigl\{0, -\alpha\Bigl(1-\frac{\theta}{\theta_T}\Bigr)\Bigr\},%
 \quad &%\text{for every } \mt F\in \R^{3\times3},
 \\
\bar\phi_1(\mt F) =-\alpha,\qquad\text{and}\qquad
\eta_1(\mt F)= - \frac{\alpha}{\theta_T},\quad &\text{for every } \mt F\in K,\\% \qquad 
\bar\phi_1(\mt F)=0\qquad \text{and}\qquad\eta_1(\mt F) %&
=  0 ,\quad&\text{for every } \mt F\in SO(3),\\
\end{split}
\]
for every $\mt F\in \R^{3\times3}$, for some positive constants $\alpha$ {representing the latent heat of the transformation}. %,c_a,c_b$ and $q\in(1,p)$. 
This term is responsible for the phase transition, that is, this term is responsible for changing the global minimizers of $\phi^k$ from $SO(3)$ above $\theta_c$ to $K$ below $\theta_c$. The behaviour is chosen to be linear in $\theta$ because of the small hysteresis;
\item the following growth conditions hold
\beq
\label{stime crescita}
\begin{split}
\bigl|\eta_1(\mt F)\bigr| + \bigl|\bar\phi_1(\mt F)\bigr|+\Bigl|\frac{\partial\phi_2}{\partial\mt F}(\mt F)\Bigr|+
\Bigl|\frac{\partial\bar\phi_1}{\partial\mt F}(\mt F)\Bigr|\leq c_d(1+|\mt F|^r),\\
%\bigl|\eta_1(\mt F)\bigr|\leq c_a|\mt F|^q+c_b, \qquad 
\Bigl|\frac{\partial\eta_1}{\partial\mt F}(\mt F)\Bigr|\leq c_f(1+|\mt F|^{\tilde{r}}),
\end{split}
\eeq
for some positive constants %$c_a,c_b,c_d,c_f$, $r,q <p$, $\tilde{r}\leq\frac{5}{6}p$ and for all $\mt F\in\R^{3\times3}$.
$c_d,c_f$, $r <p$, $\tilde{r}<\frac{5}{6}p$ and for all $\mt F\in\R^{3\times3}$.
\end{itemize}
The choice of our free energy is coherent with the one adopted in \cite{AberKim} and references therein, 
and works particularly well for computations; other choices for the free energy %whose entropy is lower for the martensite, and which is globally minimized by austenite above $\theta_T$ 
reflecting the change of energetically preferable phase across $\theta_T$ would be possible. We finally add to the energy of the system also a term of the type $\frac1k|\Delta\vc y|^2$, which penalizes interfaces between martensitic variants. A term of this type is often kept in account when modelling martensitic transformations (see e.g., \cite{BallJames1,ContiSchweizer,FDP4,MullerDolzman}) and makes the model more accurate. This term is going to disappear from the equations when we send $k\to\infty$, in the next section. We remark that we could choose $\frac {c_\beta} {k ^\beta}$ for some $c_\beta,\beta>0$ in front of $|\Delta\vc y|^2$ instead of $\frac1k$ without affecting the results below. Furthermore we can replace $|\Delta\vc y|^2$ with $|\nabla\nabla\vc y|^2$. Indeed, as these terms differ only by a null Lagrangian, the equations governing the evolution are the same. 
\begin{remark}
{\rm
In the definition of the energy we did not take into account the constraint $\phi^k(\mt F,\cdot )\to\infty$ as $\det\mt F \to 0$, which is usually introduced to avoid interpenetration. This constraint is extremely difficult to handle from a mathematical point of view, as, in this case, there exists no $c>0$ such that $\Bigl|\frac{\partial\phi^k(\mt F,\cdot )}{\partial\mt F}\Bigl|\leq c(1+|\phi^k(\mt F,\cdot )|)$. That is, a finite energy deformation map $\vc y\in W^{1,p}(\Omega;\R^3)$ might be such that $\frac{\partial\phi^k(\nabla\vc y,\cdot )}{\partial\mt F}\notin L^1(\Omega;\R^{3\times 3})$, and there is, up to our knowledge, no reference in the literature on how to control this term. We refer the reader to Section 2.4 in \cite{BallOpen}. Nonetheless, the interpenetration constraint holds in the limit $k\to\infty$, as shown in Section \ref{inelali}.
}
\end{remark}
}

Under the above assumptions, and after adding the term responsible for the interface energy, \eqref{Pb 0} becomes
\beq
\label{Pb 1}
\begin{cases}
\rho_0\ddot{\vc y} - \Div \upsigma^k(\nabla \vc y,\theta)+\frac1k\Delta^2\vc y&=\vc 0,\\
\gamma\rho_0 \dot \theta -\Div(\mt C\nabla\theta) + \theta\dot\eta_1(\nabla \vc y) &= 0%-\theta\dot\eta_1(\nabla \vc y%,\theta
,
\end{cases}
\eeq
where 
$$ \upsigma^k(\mt F,\theta) := \frac{\partial \phi^k}{\partial \mt F}(\mt F,\theta),\qquad  \eta_1(\mt F):= -\frac{\partial \phi_1} {\partial \theta}(\mt F).$$% is given by exploiting \eqref{energia kk} in \eqref{def sigma}. 
We supplement the problem with initial and boundary conditions for $\theta$ and for $\vc y$ reflecting the experimental setup for Zn\textsubscript{45}Au\textsubscript{30}Cu\textsubscript{25} in \cite{JamesNew}. The initial conditions $\theta_0(\vc x),\vc y_a(\vc x),\vc y_b(\vc x)$ respectively represent from a physical point of view the temperature, the position and the velocity at the point $\vc x$ at $t=0$. The boundary conditions for the equation describing the conservation of momentum reflect the lack of external forces imposed at the boundary of the sample. {Different boundary conditions should be chosen to describe stress induced transformations.} We consider $\Omega\subset\R^3$ to be bounded, connected and smooth, $\partial\Omega_D\subset \partial\Omega$ to be open and non-empty in $\partial\Omega$, $\partial\Omega_N=\partial\Omega\setminus\partial\Omega_D$, and set
\beq
\label{Pb bc}
\begin{cases}
\theta=\theta_B,\qquad &\text{on } \partial\Omega_D\times(0,T),\\
\mt C\nabla\theta{\cdot \vc m}=0,\qquad &\text{on } \partial\Omega_N\times(0,T),\\
\bigl(\upsigma^k(\nabla \vc y,\theta)+\nabla\Delta\vc y	\bigr)\cdot \vc m= \vc 0,\qquad &\text{on } \partial\Omega\times(0,T),\\
\Delta\vc y	= \vc 0,\qquad &\text{on } \partial\Omega\times(0,T),\\
\theta|_{t=0}=\theta_0(\vc x),\qquad &\text{in } \Omega,\\
\vc y|_{t=0} = \vc y_a(\vc x),\qquad &\text{in } \Omega,\\
\dot{\vc y}|_{t=0} = \vc y_b(\vc x),\qquad &\text{in } \Omega,
%\eta_1|_{t=0}=\eta_{1,0}(x),\qquad &\text{in } \Omega,
\end{cases}
\eeq
where %$\theta_B\in H^{1}(\Omega) \cap L^\infty(\Omega)$, $\theta_0\in L^\infty(\Omega;\R_+)$, $\vc y_b\in L^2(\Omega;\R^3)$, $\vc y_a\in W^{1, p}(\Omega;\R^3)\cap H^{2}(\Omega;\R^3)$
$\theta_B$, $\theta_0$, $\vc y_b$, $\vc y_a$ are functions in suitable function spaces, and $\vc m$ is the unit outer normal to $\Omega$. 
{Small thermal-hysteresis implies that the thermally induced phase transition can occur within a small temperature range around $\theta_T.$ For this reason, in a consistent way with the experiments in \cite{JamesNew} on Zn\textsubscript{45}Au\textsubscript{30}Cu\textsubscript{25} for which $\theta_T\approx240\,^\circ \mathrm K$ and the hysteresis is about $2\,^\circ \mathrm K$, we can assume $\|\theta_B-\theta_T\|_{L^\infty}+\|\theta_T-\theta_0\|_{L^\infty} \ll \theta_T $. 
%
%
%In a consistent way with the experiments in \cite{JamesNew} on Zn\textsubscript{45}Au\textsubscript{30}Cu\textsubscript{25}, for which $\theta_T\approx240\,^\circ \mathrm K$ and the hysteresis is about $2\,^\circ \mathrm K$, we can assume $\|\theta_B-\theta_T\|_{L^\infty}+\|\theta_T-\theta_0\|_{L^\infty} \ll \theta_T $. {\color{red}Indeed, small thermal-hysteresis implies that the thermally induced phase transition can occur within a small temperature range around $\theta_T.$ For this reason, reflecting the experimental setup of \Zn,  
%
%
%
%restrict ourselves to the study of the physical phenomenon within this small temperature range, and study the phase transformation when boundary conditions are close to $\theta_T$.
} 
In this way, we can formally justify the replacement of the $\theta$ in front of $\dot\eta_1$ with $\theta_T$, thus rewriting \eqref{Pb 1} as
\beq
\label{cons ener}
\begin{cases}
\rho_0\ddot{\vc y} - \Div \upsigma^k(\nabla \vc y,\theta)+\frac1k\Delta^2\vc y&=\vc 0,
\\
\gamma\rho_0 \dot \theta -\Div(\mt C\nabla\theta) +\theta_T\dot\eta_1(\nabla \vc y)&= 0
.
\end{cases}
\eeq
Under suitable assumptions on $\eta_1$, a maximum principle such as the one in Proposition \ref{MacPrin} could be proved to rigorously justify this approximation.

\section{Existence of weak solutions}
\label{existence full}
{
Before proving existence of suitably defined solutions to \eqref{Pb bc}--\eqref{cons ener} we first introduce the notation for norms and functional spaces. 
\subsection{Notation}
Below $c,c_T$ represent two positive constants whose value is dependent just on the parameters, and, in the case of $c_T$, on the final time $T$ and the initial data. Their value may change from line to line or even within the same line, but is always independent of $n,k,\eps,M,N$. When there is a dependence on $k$ or on $M$, this constant will be denoted by $c_{T,k}$, $c_{T,M}$ respectively.

We denote by $\mt A :\mt B$ the Frobenius product between matrices $\mt A,\mt B\in\R^{3\times 3}$ defined as $\mt A:\mt B=\tr(\mt A^T\mt B)$. 

For $q\geq 1$ and $m\in\mathbb{N}$, $L^q(\Omega;\R^m)$ represents the space of Lebesgue measurable maps $\vc f\colon\Omega\to\R^m$ such that $\|\vc f\|_q:=\bigl(\mint |\vc f(\vc x)|^q\,\mathrm{d}\vc x\bigr)^{\frac1q}<\infty.$ As usual, if $q=\infty$, $\|\vc f\|_\infty = \esssup_\Omega \vc f.$ Whenever the space $L^s(\Omega;\R^m)$ is endowed with the weak topology, we denote it by $L^q_w(\Omega;\R^m)$. Given $s\in\mathbb{N}$ we also denote by
$W^{s,q}(\Omega;\R^m)$ (with the usual norm $\|\cdot\|_{W^{s,p}}$) the Sobolev space of maps $\vc f\in L^q(\Omega;\R^m)$ such that all derivatives of $\vc f$ up to order $s$ are in $L^q(\Omega)$. We follow the usual notation $H^s(\Omega;\R^m) = W^{s,2}(\Omega;\R^m)$ and we denote by $\|\cdot\|_{H^s}$ its norm. Below, we write $H^1_D$ the space of functions $\xi\in H^1(\Omega)$ such that $\xi=0$ on $\partial\Omega_D$, and by $H^{-1}_D$ its dual. Finally, we will denote by $\vc V$ the Hilbert space
$$
\vc V : = \bigl\{\vc u\in L^2(\Omega;\R^3)\colon \|\vc u\|_{\vc V} <\infty \bigr\},\qquad \|\vc u\|_{\vc V}^2:=\mint\bigl(|\vc u|^2+|\nabla\vc u|^2+|\Delta\vc u|^2	\bigr)\,\mathrm{d}\vc x,
$$ 
endowed with the scalar product $(\cdot,\cdot)_{\vc V}$ defined by
$$
(\vc u,\vc w)_{\vc V} := \mint\bigl(\vc u\cdot\vc w+\nabla\vc u:\nabla\vc w+\Delta\vc u\cdot\Delta\vc w\bigr)\,\mathrm{d}\vc x.
$$
Given the boundary conditions for our problem, we are not able to apply the Miranda-Talenti inequality, and therefore the embedding $\vc V\hookrightarrow H^1(\Omega;\R^3)$ might not be compact. The following lemma gives us sufficient conditions for this to hold:
\begin{lemma}
\label{lemmacomp0}
Let $\vc u_j\in \vc V$ be such that $\|\vc u_j\|_V \leq c$
and such that $|\nabla\vc u_j|^2$ is uniformly integrable. Then, there exists $\vc u\in\vc V$ and a non-relabelled subsequence $\vc u_j$ converging weakly in $\vc V$ and strongly in $H^1(\Omega;\R^3)$ to $\vc u$. Furthermore, the space $\vc V\cap W^{1,q}(\Omega;\R^3)$ is compactly embedded in $H^1(\Omega;\R^3)$ for every $q>2$.
\end{lemma}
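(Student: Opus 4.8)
The plan is to reduce the statement to strong $L^2$-convergence of the gradients, and to get the latter by combining interior elliptic regularity with the equi-integrability hypothesis on a thin boundary layer. Observe first that, directly from the definition of $\|\cdot\|_{\vc V}$, the embedding $\vc V\hookrightarrow H^1(\Omega;\R^3)$ is continuous; what is \emph{not} available in general is a uniform $H^2$ bound, since, in the absence of boundary conditions, $\vc u\in L^2$ and $\Delta\vc u\in L^2$ do not control the full Hessian -- this is precisely why Rellich cannot be applied on $\Omega$ directly. As $\vc V$ is a Hilbert space, the bound $\|\vc u_j\|_{\vc V}\leq c$ yields, along a non-relabelled subsequence, $\vc u_j\rightharpoonup\vc u$ weakly in $\vc V$ for some $\vc u\in\vc V$; in particular $\vc u_j\rightharpoonup\vc u$, $\nabla\vc u_j\rightharpoonup\nabla\vc u$ and $\Delta\vc u_j\rightharpoonup\Delta\vc u$ weakly in $L^2$, and, $\Omega$ being bounded and smooth, Rellich--Kondrachov gives $\vc u_j\to\vc u$ strongly in $L^2(\Omega;\R^3)$. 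It then suffices to show $\nabla\vc u_j\to\nabla\vc u$ strongly in $L^2(\Omega;\R^{3\times3})$.

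For $m\in\mathbb N$ set $\Omega_m:=\{\vc x\in\Omega\colon\dist(\vc x,\partial\Omega)>1/m\}$, so that $\overline{\Omega_m}\subset\Omega_{m+1}\Subset\Omega$ and $|\Omega\setminus\Omega_m|\to0$. Interior $L^2$-regularity for the Laplacian (no boundary condition needed) provides a constant $C_m$, depending only on $m$ and $\Omega$, such that
\[
\|\vc u_j\|_{H^2(\Omega_{m+1})}\leq C_m\bigl(\|\Delta\vc u_j\|_{L^2(\Omega)}+\|\vc u_j\|_{L^2(\Omega)}\bigr)\leq C_m\, c,
\]
so $\{\vc u_j\}_j$ is bounded in $H^2(\Omega_{m+1})$ and hence, by Rellich, precompact in $H^1(\Omega_m)$. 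A diagonal extraction over $m$ then gives a further non-relabelled subsequence with $\vc u_j\to\vc u$ strongly in $H^1(\Omega_m)$ for every $m$ (the limit necessarily being $\vc u$, as it must coincide with the $L^2$-limit).

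It remains to control $\Omega\setminus\Omega_m$. For any measurable $E\subset\Omega$, weak lower semicontinuity of the $L^2(E)$-norm gives $\int_E|\nabla\vc u|^2\leq\liminf_j\int_E|\nabla\vc u_j|^2$; hence, using the equi-integrability of $\{|\nabla\vc u_j|^2\}_j$, given $\eps>0$ we may pick $\delta>0$ such that $\int_E|\nabla\vc u_j|^2<\eps$ for all $j$ and $\int_E|\nabla\vc u|^2<\eps$ whenever $|E|<\delta$, and then $m$ with $|\Omega\setminus\Omega_m|<\delta$. Splitting the integral over $\Omega_m$ and $\Omega\setminus\Omega_m$,
\[
\int_\Omega|\nabla\vc u_j-\nabla\vc u|^2\leq\int_{\Omega_m}|\nabla\vc u_j-\nabla\vc u|^2+2\int_{\Omega\setminus\Omega_m}|\nabla\vc u_j|^2+2\int_{\Omega\setminus\Omega_m}|\nabla\vc u|^2\leq\int_{\Omega_m}|\nabla\vc u_j-\nabla\vc u|^2+4\eps,
\]
and, letting $j\to\infty$ with $m$ fixed, the first term vanishes by the strong convergence on $\Omega_m$. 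Thus $\limsup_j\int_\Omega|\nabla\vc u_j-\nabla\vc u|^2\leq4\eps$ for all $\eps>0$, so $\nabla\vc u_j\to\nabla\vc u$ in $L^2(\Omega;\R^{3\times3})$, which together with $\vc u_j\to\vc u$ in $L^2$ gives strong convergence in $H^1(\Omega;\R^3)$. For the final assertion, if $\{\vc u_j\}$ is bounded in $\vc V\cap W^{1,q}(\Omega;\R^3)$ with $q>2$, then $\sup_j\|\nabla\vc u_j\|_{L^q}<\infty$, hence (e.g.\ by Hölder's inequality, or by de la Vallée--Poussin) $\{|\nabla\vc u_j|^2\}_j$ is equi-integrable and the first part applies.

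The only genuine difficulty is that the boundary conditions here do not allow a global $H^2$ (Miranda--Talenti type) estimate, so interior regularity controls $\vc u_j$ only away from $\partial\Omega$; the equi-integrability of $|\nabla\vc u_j|^2$ is exactly the ingredient that makes the boundary-layer contribution uniformly small in $j$, thereby upgrading weak $H^1$-convergence to strong.
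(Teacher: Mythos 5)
Your argument is correct, but it takes a genuinely different route from the paper. The paper observes that $\nabla(\vc u-\vc u_j)$ is curl-free while its divergence, $\Delta(\vc u-\vc u_j)$, is bounded in $L^2$, and then invokes a version of the div-curl lemma (valid for equi-integrable products) to conclude that $\int_\Omega \nabla(\vc u-\vc u_j):\nabla(\vc u-\vc u_j)\,\mathrm{d}\vc x\to0$; the equi-integrability hypothesis is what lets that lemma upgrade distributional convergence of the product to convergence of the integral over all of $\Omega$. You instead bypass the div-curl machinery entirely: interior elliptic regularity gives a uniform $H^2(\Omega')$ bound on every $\Omega'\Subset\Omega$ (no boundary condition needed), Rellich then yields strong $H^1$ convergence on compact subsets after a diagonal extraction, and the equi-integrability of $|\nabla\vc u_j|^2$ — together with weak lower semicontinuity of $\int_E|\nabla\vc u|^2$ — makes the contribution of the boundary layer $\Omega\setminus\Omega_m$ uniformly small. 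Both proofs spend the equi-integrability hypothesis in the same place, namely to rule out concentration of $|\nabla\vc u_j|^2$ near $\partial\Omega$ where no global $H^2$ (Miranda--Talenti type) control is available; your version is more elementary and self-contained, at the cost of being specific to the gradient/Laplacian structure, whereas the paper's div-curl argument is shorter once the cited lemma is accepted as a black box. The deduction of the final compact-embedding statement from Hölder is the same in both.
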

\begin{proof}
By Banach-Alaoglu, we get the existence of $\vc u\in\vc V$ and of a non-relabelled subsequence $\vc u_j$ converging weakly in $\vc V$. By Sobolev embeddings, $\vc u_j\to\vc u$ also strongly in $L^2(\Omega;\R^3)$. Now, we notice that, on the one hand, $\nabla \times \nabla(\vc u-\vc u_j) = 0$ in the sense of distributions. On the other hand $\|\Div \nabla(\vc u-\vc u_j)\|_2\leq c$. Therefore, an application of a version of the div-curl lemma (see e.g., \cite{All}) entails 
$$
\lim_j\|\nabla\vc u_j-\nabla\vc u\|^2_2 =
\lim_j\int_{\Omega}  \nabla(\vc u-\vc u_j):\nabla(\vc u-\vc u_j)\,\mathrm{d}\vc x = 0.
$$
Now, we recall that for a sequence $\vc u_j\in W^{1,q}(\Omega;\R^3)$ with $\|\vc u_j\|_{W^{1,q}}\leq c$, for some $c>0$ $q>2$, $|\nabla\vc u_j|^2$ is uniformly integrable. Therefore, thanks to the first statement of the lemma, we deduce that $\vc V\cap W^{1,p}(\Omega;\R^3)$ is compactly embedded in $H^{1}(\Omega;\R^3).$
\end{proof}
Finally, given a Banach space $X$ with norm $\|\cdot\|_X$, $T>0$, and some $q\geq 1$, we use the space $L^s(0,T;X)$ of measurable maps $f\colon (0,T)\to X$ such that $\|f\|_{L^s(0,T;X)}:= \bigl( \Tint \|f(t)\|_X^s\,\mathrm{d}t\bigr)^{\frac1s}<\infty.$ We refer the reader to \cite{Boyer,Evans} for more details on these functional spaces.
}
\subsection{Statement and proof of the existence result}
We are now ready to prove the existence of suitably defined solutions to our problem:
\begin{theorem}
\label{esiste 3d}
Let $\theta_B\in H^{1}(\Omega) \cap L^{\frac{p}{p-r}}(\Omega)$, $\theta_0\in L^2(\Omega;\R_+)$, $\vc y_b\in L^2(\Omega;\R^3)$, $\vc y_a\in W^{1, p}(\Omega;\R^3)\cap \vc V$. Then, there exist $(\vc y_k,\theta_k)$ such that for every $T>0$ 
\begin{align*}
&\vc y_k \in L^\infty(0,T;W^{1,p}(\Omega;\R^3)\cap \vc V)\cap W^{1,\infty}(0,T; L^2(\Omega;\R^3)),\\
&\theta_k \in L^2(0,T;H^1(\Omega))\cap L^\infty(0,T;L^2(\Omega)),
\end{align*}
 %and a Young measure $\nu_{\vc x,t}^k\in L^\infty_{w*}((0,T)\times\Omega;\mathcal M_1(\R^{3\times 3}))$ 
and
%\begin{align}
%\label{equazione 1a}
%\Tint\mint \tilde\xi\mt C\nabla\theta_k\cdot\nabla\tilde\psi \,\mathrm{d}\vc x = \Tint\mint \biggl(\theta_T\int_{\R^{3\times3}} \eta_1(\mt A)\,\mathrm{d}\nu_{\vc x,t}^k(\mt A)+\rho_0\theta_k\biggr)\dot{\tilde\xi}\tilde\psi \,\mathrm d\vc x\mathrm dt+	\tilde\xi(0)\mint\tilde\psi\theta_0\,\mathrm d\vc x, 
%\\
%\label{equazione 1b}
%\Tint\mint \biggl(\xi \nabla\boldsymbol\psi:\int_{\R^{3\times3}}\frac{\partial\phi^k}{\partial\mt F}(\mt A,\theta_k)\,\mathrm{d}\nu_{\vc x,t}^k(\mt A)  -\rho_0\dot{\xi}\dot{\vc y}_k\cdot\boldsymbol \psi\biggr)\,\mathrm d\vc x\,\mathrm d t=%\dot\xi(0)\mint \vc y_0\cdot\boldsymbol\psi\,\mathrm{d}\vc x
%-\xi(0)\mint \vc y_1\cdot\boldsymbol\psi\,\mathrm{d}\vc x,\\
%\label{equazione 1c}
%\int_{\R^{3\times3}} \mt A \,\mathrm{d}\nu_{\vc x,t}^k(\mt A) = \nabla\vc y_k (\vc x, t),\qquad\text{a.e. $(\vc x,t)\in \Omega\times(0,T)$},
%\end{align}
\begin{align}
\label{equazione 1a}
\Tint\mint \Bigl( \zeta\mt C\nabla\theta_k\cdot\nabla\omega - \bigl(\theta_T\eta_1(\nabla\vc y_k)+\gamma\rho_0\theta_k\bigr)\dot{\zeta}\omega \Bigr)\,\mathrm d\vc x\, \mathrm dt+	\zeta(0)\mint\omega\bigl(\gamma\rho_0\theta_0+\theta_T\eta_1(\nabla\vc y_a)\bigr)\,\mathrm d\vc x, 
%\Tint\mint \zeta\mt C\nabla\theta_k\cdot\nabla\omega \,\mathrm{d}\vc x = \Tint\mint \bigl(\theta_T\eta_1(\nabla\vc y_k)+\gamma\rho_0\theta_k\bigr)\dot{\zeta}\omega \,\mathrm d\vc x\, \mathrm dt+	\zeta(0)\mint\omega\bigl(\gamma\rho_0\theta_0+\theta_T\eta_1(\nabla\vc y_a)\bigr)\,\mathrm d\vc x, 
\\
\label{equazione 1b}
\Tint\mint \biggl(\frac\xi k\Delta\vc y\cdot\Delta\boldsymbol\psi+\xi \nabla\boldsymbol\psi:\frac{\partial\phi^k}{\partial\mt F}(\nabla\vc y_k,\theta_k) -\rho_0\dot{\xi}\dot{\vc y}_k\cdot\boldsymbol \psi\biggr)\,\mathrm d\vc x\,\mathrm d t=%\dot\xi(0)\mint \vc y_0\cdot\boldsymbol\psi\,\mathrm{d}\vc x
\rho_0\xi(0)\mint \vc y_1\cdot\boldsymbol\psi\,\mathrm{d}\vc x,
%\\
%\label{equazione 1c}
%\int_{\R^{3\times3}} \mt A \,\mathrm{d}\nu_{\vc x,t}^k(\mt A) = \nabla\vc y_k (\vc x, t),\qquad\text{a.e. $(\vc x,t)\in \Omega\times(0,T)$},
\end{align}
for all $\xi,\zeta\in C^1_c(-T,T)$, $\boldsymbol\psi\in C^1(\Omega;\R^3)\cap \vc V$, and all $\omega\in C^1(\Omega)\cap H^1_D$. Furthermore, $\theta_k=\theta_B$ on $\partial\Omega_D$ in the sense of traces for a.e. $t\in(0,T)$, 
$$\dot{\vc y}_k\in C([0,T];L^2_w(\Omega;\R^3)),\qquad\theta_k+\eta(\nabla\vc y_k)\in C([0,T]; L^{\min{\{2,p/{r}}\}}_w(\Omega))$$ 
and
\begin{align}
\label{che bund 3d}
\begin{split}
\Bigl(\|\theta_k\|^2_{2}+\|\dot{\vc y}_k\|_{2}^2+ \|\vc{y}_k\|^p_{W^{1,p}}+\frac1k\|\Delta\vc y_k\|_{2}^2\Bigr)(t)+\tint\|\nabla{\theta}_k\|^2_{2}(\tau)\,\mathrm{d}\tau 
 + k \mint \phi_2(\nabla \vc y_k(\vc x,t))\,\mathrm d\vc x \\\leq C_T+c\bigg|\mint \vc y_a (\vc x)\,\mathrm{d}\vc x\bigg|^p+\frac ck\|\Delta\vc y_a\|_{2}^2 + c k\mint \phi_2(\nabla \vc y_a) \,\mathrm d\vc x, 
\end{split}
\end{align}
for a.e $t\in(0,T)$, for some positive $C_T$ depending on the parameters and on $\|\theta_B\|_{{\frac{p}{p-r}}}$, $\|\theta_B\|_{H^1}$, $\|\theta_0\|_2$, $\|\vc y_b\|_2$ and $T$ only, and for some $c$ depending solely on the parameters.
%%
%%\begin{split}
%%\mint \biggl(|\theta_k|^2(\vc x,t) +|\dot{\vc y}_k|^2(\vc x,t)+\int_{\R^{3\times3}}\bigl(k\phi_2(\mt A)+\bar\phi_1(\mt A)\bigr)\,\mathrm{d}\nu_{\vc x,t}^k(\mt A)\biggr)\,\mathrm d\vc x+\frac{c_m}{2}\tint\|\nabla\theta_k\|^2_2(\tau)\mathrm{d}\tau\\
%%\leq 
%%\mint \biggl(\theta_0^2 +|{\vc y_b}|^2+k\phi_2(\nabla\vc y_a)+\bar\phi_1(\nabla\vc y_a)\biggr)\,\mathrm d\vc x+t\|\theta_B\|^2_{H^1},
%%\end{split}
%%\eeq
%for a.e. $t\in(0,T)$.
\end{theorem}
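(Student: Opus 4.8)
The solution is built by a regularised Faedo--Galerkin scheme. Fix $k$, and introduce parameters $n$ (Galerkin dimension), $M$ (a truncation level for the lower-order nonlinearities $\eta_1,\partial_{\mt F}\eta_1,\bar\phi_1$) and $\eps$ (a small added viscosity, say $\eps\Delta^2\dot{\vc y}$ in the momentum balance). One projects \eqref{cons ener} onto the finite-dimensional subspaces spanned by Galerkin bases of $\vc V\cap W^{1,p}(\Omega;\R^3)$ and of $H^1_D$, writing $\theta_n=\theta_B^{\mathrm e}+(\text{fluctuation in the temperature basis})$ with $\theta_B^{\mathrm e}\in H^1(\Omega)$ a fixed extension of $\theta_B$ coinciding with $\theta_B$ on $\partial\Omega_D$. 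Truncation and viscosity make the resulting Carath\'eodory system of ODEs solvable, and the a priori estimate below upgrades local to global existence; one then lets, in order, $n\to\infty$, $M\to\infty$, $\eps\to0$.

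\noindent\textbf{A priori estimate.} Test the momentum balance with $\dot{\vc y}_n$. The boundary conditions \eqref{Pb bc} are designed so that all boundary contributions of the biharmonic operator and of $\Div\upsigma^k$ cancel, and since
\[
\mint\nabla\dot{\vc y}_n:\upsigma^k(\nabla\vc y_n,\theta_n)\,\mathrm d\vc x
=\dert\mint\Bigl(\tfrac1{\rho_0}\bar\phi_1(\nabla\vc y_n)+k\,\phi_2(\nabla\vc y_n)\Bigr)\mathrm d\vc x
-\tfrac1{\rho_0}\mint\theta_n\,\dot\eta_1(\nabla\vc y_n)\,\mathrm d\vc x,
\]
one gets the identity $\dert\mathcal E^{\mathrm m}_n=\tfrac1{\rho_0}\mint\theta_n\dot\eta_1(\nabla\vc y_n)\,\mathrm d\vc x$ for $\mathcal E^{\mathrm m}_n:=\tfrac{\rho_0}2\|\dot{\vc y}_n\|_2^2+\tfrac1{2k}\|\Delta\vc y_n\|_2^2+\tfrac1{\rho_0}\mint\bar\phi_1(\nabla\vc y_n)+k\mint\phi_2(\nabla\vc y_n)$. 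Testing the energy balance with $\theta_n-\theta_B^{\mathrm e}$ (the boundary terms again vanish thanks to the mixed conditions on $\partial\Omega_D$ and $\partial\Omega_N$), and using that $\theta_B^{\mathrm e}$ is time-independent so that $\mint\dot\eta_1(\nabla\vc y_n)\theta_B^{\mathrm e}\,\mathrm d\vc x=\dert\mint\eta_1(\nabla\vc y_n)\theta_B^{\mathrm e}\,\mathrm d\vc x$, gives a differential inequality containing $\tfrac{\gamma\rho_0}2\dert\|\theta_n\|_2^2$, the dissipation $\mint\mt C\nabla\theta_n\cdot\nabla\theta_n\,\mathrm d\vc x\ge c_m\|\nabla\theta_n\|_2^2$, and the same coupling integral with prefactor $\theta_T$. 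Forming $\rho_0\theta_T\times(\text{momentum identity})+(\text{energy inequality})$ cancels the coupling integral exactly and produces
\[
\dert\Bigl(\rho_0\theta_T\,\mathcal E^{\mathrm m}_n+\tfrac{\gamma\rho_0}2\|\theta_n\|_2^2-\gamma\rho_0\!\mint\!\theta_n\theta_B^{\mathrm e}\,\mathrm d\vc x-\theta_T\!\mint\!\eta_1(\nabla\vc y_n)\theta_B^{\mathrm e}\,\mathrm d\vc x\Bigr)+\tfrac{c_m}2\|\nabla\theta_n\|_2^2\le c\,\|\theta_B\|_{H^1}^2 .
\]
Integrating in time and absorbing the indefinite terms $\mint\bar\phi_1(\nabla\vc y_n)$, $\mint\eta_1(\nabla\vc y_n)\theta_B^{\mathrm e}$, $\mint\theta_n\theta_B^{\mathrm e}$, $\mint\mt C\nabla\theta_n\cdot\nabla\theta_B^{\mathrm e}$ by Young's inequality, the growth bounds \eqref{stime crescita} with $r<p$, the hypothesis $\theta_B\in L^{p/(p-r)}(\Omega)$, and the coercivity $\phi_2(\mt F)\ge c_0|\mt F|^p-c_1$ together with $\phi_2\ge0$ (which keeps the constants independent of $k$), one obtains \eqref{che bund 3d} for every quantity except the full $W^{1,p}$-norm. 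The latter follows from the Poincar\'e inequality once one observes that integrating the momentum balance over $\Omega$ and using \eqref{Pb bc} makes the total momentum $\mint\rho_0\dot{\vc y}_n\,\mathrm d\vc x$ constant, so $\mint\vc y_n(t)\,\mathrm d\vc x=\mint\vc y_a\,\mathrm d\vc x+t\mint\vc y_b\,\mathrm d\vc x$; this explains both the term $c|\mint\vc y_a\,\mathrm d\vc x|^p$ and the dependence of $C_T$ on $\|\vc y_b\|_2$ and $T$.

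\noindent\textbf{Passage to the limit.} From \eqref{che bund 3d}, $\vc y_n$ is bounded in $L^\infty(0,T;\vc V\cap W^{1,p}(\Omega;\R^3))$ and $\dot{\vc y}_n$ in $L^\infty(0,T;L^2(\Omega;\R^3))$; since $p>2$, the compact embedding $\vc V\cap W^{1,p}\hookrightarrow\hookrightarrow H^1$ of Lemma \ref{lemmacomp0} and the Aubin--Lions--Simon lemma give $\vc y_n\to\vc y$ in $C([0,T];H^1(\Omega;\R^3))$, hence $\nabla\vc y_n\to\nabla\vc y$ a.e.; moreover $\theta_n$ is bounded in $L^\infty(0,T;L^2)\cap L^2(0,T;H^1)$ and converges weakly there. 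The nonlinear terms are identified by combining this a.e.\ convergence with equi-integrability supplied by the growth conditions: $\eta_1(\nabla\vc y_n)$, $\partial_{\mt F}\bar\phi_1(\nabla\vc y_n)$ and $\partial_{\mt F}\phi_2(\nabla\vc y_n)$ are bounded in $L^\infty(0,T;L^{p/r})$ with $p/r>1$ and converge strongly in $L^1$ by Vitali's theorem, while the temperature-linear part of the stress, $\theta_n\,\partial_{\mt F}\eta_1(\nabla\vc y_n)$, is bounded in $L^2(0,T;L^\sigma)$ with $\tfrac1\sigma=\tfrac16+\tfrac{\tilde r}p<1$ precisely because $\tilde r<\tfrac56p$, so the weak--strong product converges in $L^1$; since the time derivatives in \eqref{equazione 1a}--\eqref{equazione 1b} have been transferred onto $\zeta,\xi\in C^1_c(-T,T)$, one may pass to the limit $n\to\infty$ in these weak formulations, and \eqref{che bund 3d} survives as an inequality by weak lower semicontinuity of the norms and Fatou's lemma for $\mint\phi_2(\nabla\vc y_k)$. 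The time-continuity statements, the attainment of the initial data, and the trace $\theta_k=\theta_B$ on $\partial\Omega_D$ come from a Lions--Magenes argument applied to the \emph{enthalpy} $w_k:=\gamma\rho_0\theta_k+\theta_T\eta_1(\nabla\vc y_k)$, whose distributional time derivative $\dot w_k=\Div(\mt C\nabla\theta_k)$ lies in $L^2(0,T;H^{-1}_D)$ while $w_k\in L^\infty(0,T;L^{\min\{2,p/r\}})$, together with the analogous argument for $\dot{\vc y}_k$ (using $\ddot{\vc y}_k$ bounded in a negative Sobolev space). One then removes $M$ and finally $\eps$ by repeating the same compactness and weak--strong arguments.

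\noindent\textbf{Main obstacle.} The crux is the lack of rank-one convexity of $\phi^k$: $\Div\upsigma^k$ is not elliptic and no monotonicity is available, so the strong convergence of $\nabla\vc y$ needed to identify $\upsigma^k(\nabla\vc y,\theta)$ and $\eta_1(\nabla\vc y)$ cannot be read off the energy bound via the Miranda--Talenti inequality (which does not apply under our boundary conditions) and must instead be wrung out of the interplay between the $W^{1,p}$-coercivity provided by $k\phi_2$ and the div--curl lemma underpinning Lemma \ref{lemmacomp0}; this is exactly the step in which the regularisation is removed. A secondary, more technical, obstacle is the coupling term $\dot\eta_1(\nabla\vc y)=\partial_{\mt F}\eta_1(\nabla\vc y):\nabla\dot{\vc y}$ in the energy balance, which involves the uncontrolled quantity $\nabla\dot{\vc y}$: this forces one to work throughout with the enthalpy $w=\gamma\rho_0\theta+\theta_T\eta_1(\nabla\vc y)$ rather than with $\theta$ alone, and it is the origin of the borderline exponents $r<p$, $\tilde r<\tfrac56p$ and of the hypothesis $\theta_B\in L^{p/(p-r)}(\Omega)$.
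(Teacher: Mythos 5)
Your overall strategy coincides with the paper's: a Galerkin scheme with a truncation $H_M$ of the lower-order nonlinearities, the a priori estimate obtained by cancelling the coupling integral $\mint\theta\,\dot\eta_1$ between the momentum balance tested with $\dot{\vc y}$ and the energy balance tested with $\theta-\theta_B$, the Young-inequality absorption of $\mint\eta_1(\nabla\vc y)\theta_B$ into $\frac k4\mint\phi_2$ using $r<p$ and $\theta_B\in L^{p/(p-r)}$, the control of $\mint\vc y\,\mathrm d\vc x$ by testing with constants followed by Poincar\'e, the div--curl compactness of Lemma \ref{lemmacomp0}, Vitali's theorem for the nonlinearities, the exponent bookkeeping for $\theta\,\partial_{\mt F}\eta_1(\nabla\vc y)$ via $L^2(0,T;L^6)$ and $\tilde r<\frac56 p$, and the enthalpy-based continuity in time. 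These all match.

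There is, however, a genuine gap in your choice of auxiliary regularisation. You replace the paper's term $\frac1N(\vc y_n,\boldsymbol\psi)_{H^2}$ by a viscosity $\eps\Delta^2\dot{\vc y}$ and then claim that, at the stage $n\to\infty$ with $M$ fixed, \eqref{che bund 3d} bounds $\vc y_n$ in $L^\infty(0,T;W^{1,p}\cap\vc V)$ so that Lemma \ref{lemmacomp0} yields $\vc y_n\to\vc y$ in $C([0,T];H^1)$ and hence $\nabla\vc y_n\to\nabla\vc y$ a.e. This is circular: while the truncation is active, $\phi_2(H_M(\nabla\vc y_n))$ is \emph{bounded} (by $c_2((M+1)^p+1)$), so the term $k\mint\phi_2(H_M(\nabla\vc y_n))$ carries no $W^{1,p}$-coercivity whatsoever, and the only second-order information is a bound on $\|\Delta\vc y_n\|_2$, i.e.\ a bound in $\vc V$ only. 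Since Miranda--Talenti is unavailable under the natural boundary conditions, $\vc V$ does not embed compactly in $H^1$ without the uniform integrability of $|\nabla\vc y_n|^2$ required by Lemma \ref{lemmacomp0} --- which is exactly what you cannot verify at this stage --- and your $\eps\Delta^2\dot{\vc y}$ term only adds control of $\Delta\dot{\vc y}$, not of the full Hessian. Without the a.e.\ convergence of $\nabla\vc y_n$ you cannot identify the truncated nonlinearities $\eta_1(H_M(\nabla\vc y_n))$, $\frac{\partial\phi^k}{\partial\mt F}(H_M(\nabla\vc y_n),\theta_n)\frac{\partial H_M}{\partial\mt F}(\nabla\vc y_n)$ in the limit. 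The paper's $\frac1N\|\vc y_n\|_{H^2}^2$ contribution to the energy is precisely what closes this loop: it gives a uniform-in-$n$ (and in $M$) bound in $L^\infty(0,T;H^2)$, hence strong $H^1$ compactness via Aubin--Lions at the truncated stages, and it is removed \emph{last}, after $M\to\infty$ has restored the $W^{1,p}$-coercivity so that Lemma \ref{lemmacomp0} takes over; it also forces the preliminary reduction to $\vc y_a\in H^2\cap W^{1,p}$ followed by an approximation argument for general $\vc y_a\in\vc V\cap W^{1,p}$, a step your sketch omits. To repair your scheme you should either reinstate an $H^2$-elliptic regularisation of this type, or supply an independent argument (e.g.\ a local div--curl argument combined with time equicontinuity) producing convergence in measure of $\nabla\vc y_n$ at fixed $M$.
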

\begin{proof}
Let $\{\omega_j\}_{j\in\mathbb{N}}\subset H^1_D(\Omega)$ and $\{\boldsymbol\psi_j\}_{j\in\mathbb{N}}\subset H^2(\Omega;\R^3)$ be bases of $H^1(\Omega)$ and $H^2(\Omega;\R^3)$ respectively. We assume that the first is constructed by taking the eigenvectors of the Laplacian with homogeneous Dirichlet boundary conditions on $\partial\Omega_D$ and homogeneous Neumann on $\partial\Omega_N$. The second one is constructed by taking the eigenvectors of the compact, symmetric, and linear operator $A\colon H^2(\Omega;\R^3)\to (H^2(\Omega;\R^3))^*$, where $$\langle A\vc u,\vc w\rangle = (\vc u,\vc w)_{H^2},%+\mint \sum_{i=1}^3\nabla\nabla\vc u_i:\nabla\nabla\vc w_i\,\mathrm{d}\vc x,
$$ for each $\vc u,\vc w \in H^2(\Omega;\R^3).$ Here $(\cdot,\cdot)_{H^2}$ is the scalar product in the space $H^2(\Omega;\R^3).$ %with homogeneous Neumann boundary conditions, and where $\mathrm{Id}$ is the identity operator. Therefore, these bases are orthogonal both with respect to the $L^2(\Omega)$ and the $H^1(\Omega)$ scalar products, and
We can define $P_n$ and $\vc P_n$ to be respectively the $L^2(\Omega)$ and the $L^2(\Omega;\R^3)$ projectors on the finite subspaces $\{\omega_j\}_{j=1,\dots,n}$ and $\{\boldsymbol\psi_j\}_{j=1,\dots,n}$. Let us consider $k$ to be fixed, we drop the subscript $k$ to simplify notation, and we suppose without loss of generality that $\rho_0=\gamma=\theta_T=1$. Furthermore, we start by assuming that $\vc y_a\in H^2(\Omega;\R^3)\cap W^{1,p}(\Omega;\R^3)$. The strategy of the proof is the following: we show existence of solutions via the Galerkin method to an approximated problem depending on parameters $M,N\in\mathbb{N}.$ Then we send first $M$ and then $N$ to infinity, and show that solutions to the approximated problems converge to solutions to the original problem. Finally, we recover the result for general $\vc y_a\in \vc V\cap W^{1,p}(\Omega;\R^3)$.\\

%We point out that $\vc P_n\vc  y_a$ converges strongly to $\vc y_a$ in $H^1(\Omega;\R^3)$ as $n\to\infty$, by Vitali's theorem we have that the sequence $\vc P_n\vc  y_a$ is unifromly integrable, and therefore, by de la Vall\'ee-Poussin theorem, there exists a non-negative increasing convex function $G\colon [0,\infty)\to[0,\infty)$ such that 
%\beq
%\label{defG}
%\lim_{s\to\infty}\frac{G(s)}{s}=\infty,\qquad \sup_n \mint G(|\nabla\vc P_n\vc y_a|^2)\,\mathrm{d}\vc x <\infty.
%\eeq
%Furthermore, $G$ can be chosen to be $C^1([0,\infty))$ and such that $|G(s)|\leq c(1+|s|^{\frac54})$. Indeed, if $\lim _{s\to\infty}\frac{G(s)}{s^{\frac54}} = \infty$ we can just chose $G(s)=|s|^{\frac54}$. We remark that, in this case, we have (see e.g., \cite[Prop 2.32]{Dac})
%\beq
%\label{crescita Gp}
%|G'(s)|\leq c(1+|s|^{\frac14}).
%\eeq
\noindent
For every $n\geq 1$ let us consider the following functions
\[
\theta_{n}(\vc x,t)=\theta_B+\sum_{j=1}^{n} b_j^{(n)}(t)\omega_j(\vc x),\qquad
\vc y_{n}(\vc x,t)=\sum_{j=1}^{n} c_j^{(n)}(t)\boldsymbol\psi_j(\vc x),
\]
satisfying
\begingroup
\addtolength{\jot}{0.9em}
\begin{align}
\label{approx 1a}
&\mint \bigl(\dot\theta_n\omega_j+ \mt C\nabla\theta_n\cdot\nabla\omega_j \bigr)\,\mathrm{d}\vc x = - \mint \dot\eta_1(H_M(\nabla\vc y_n))\omega_j\,\mathrm d\vc x, \qquad \text{for all }j=1,\dots,n,
\\
\label{approx 1b}
\begin{split}
&\mint \biggl(\frac{\partial\phi^k}{\partial\mt F}(H_M(\nabla\vc y_n),\theta_n)\frac{\partial H_M(\nabla\vc y_n)}{\partial\mt F}:\nabla\boldsymbol\psi_i+\frac{1}{N}\bigl(\vc y_n,\boldsymbol\psi_i\bigr)_{H^2}\biggr)\,\mathrm d\vc x 
\\
&=- \mint\Bigl(\frac1k\Delta\boldsymbol\psi_i\cdot\Delta\vc y_n
+\ddot{\vc y}_n\cdot\boldsymbol \psi_i\Bigr)\,\mathrm d\vc x, \qquad \text{for all }i=1,\dots,n,
\end{split}
\\\bigskip
\nonumber%label{approx 1c}
&\theta_n(\vc x, 0) = P_n(\theta_0(\vc x)-\theta_B)+\theta_B,\qquad \vc y_n(\vc x, 0) = \vc P_n\vc y_a(\vc x),\qquad \dot{\vc y}_n(\vc x, 0) = \vc P_n\vc y_b(\vc x),
\end{align}
\endgroup
for almost every $\vc x\in\Omega$, for $M,N\in\mathbb{N}$ fixed, and where $H_M$ is a smooth function such that 
\beq
\label{defH}
H_M(\mt F) = \begin{cases}
\mt F,\qquad&\text{if $|\mt F|\leq M$},\\
(M+1) \frac{\mt F}{|\mt F|},\qquad&\text{if $|\mt F|\geq M+1$,}
\end{cases}
\eeq
and $\Bigl|\frac{\partial H_M(\mt F)}{\partial\mt F}\Bigr|\leq 2$. We remark that $\frac{\partial H_M(\mt F)}{\partial\mt F}$ is a fourth order tensor, and $\frac{\partial \phi^k(H_M(\mt G),\cdot)}{\partial\mt F}\frac{\partial H_M(\mt G)}{\partial\mt F}$ should read as
$$
\Bigl(\frac{\partial \phi^k(H_M(\mt G),\cdot)}{\partial\mt F}\frac{\partial H_M(\mt G)}{\partial\mt F}\Bigr)_{lm} =\sum_{i,j=1}^3 \frac{\partial \phi^k(\mt H,\cdot)}{\partial\mt H_{ij}}\Bigl|_{\mt H=H_M(\mt G)}\frac{\partial (H_M(\mt F))_{ij}}{\partial\mt F_{lm}} \Bigl|_{\mt F=\mt G}= \frac{\partial \phi^k(H_M(\mt F),\cdot)}{\partial\mt F_{lm}}\Bigl|_{\mt F=\mt G}.
$$
The above system of differential equations in $\vc b^{(n)},\vc c^{(n)}$ admits locally in time a smooth solution by standard ODE theory. Define $\hat{\theta}_n:=\theta_n-\theta_B$ and $\hat{\theta}_{0,n}:= P_n(\theta_0(\vc x)-\theta_B)$. After multiplying \eqref{approx 1a} tested against $\omega_j$ by $b_j^{(n)}$, we can sum over $j$ and get
\beq
\label{es3d 1}
\frac12\dert\|\hat{\theta}_n\|^2_2+c_m\|\nabla\hat{\theta}_n\|^2_2\leq  -\mint\dot\eta_1(H_M(\nabla\vc y_n))\hat{\theta}_n\,\mathrm d\vc x - \mint\nabla\theta_B\cdot\nabla\hat{\theta}_n.
\eeq
Here we have also used the fact that $c_m$ is the minimum eigenvalue of $\mt C$ to bound $\mt C\nabla\hat{\theta}_n\cdot\nabla\hat{\theta}_n$ from below with $c_m|\nabla\hat{\theta}_n|^2$. Thanks to the Cauchy-Schwarz and the Young inequalities, \eqref{es3d 1} becomes
$$
\frac12\dert\|\hat{\theta}_n\|^2_2+\frac{c_m}2\|\nabla\hat{\theta}_n\|^2_2\leq  -\mint\hat{\theta}_n\dot\eta_1(H_M(\nabla\vc y_n))\,\mathrm d\vc x + c\|\nabla\theta_B\|_2^2.
$$
On the other hand, testing \eqref{approx 1b} with $\boldsymbol \psi_i$, multiplying it by $\dot c_i^{(n)}$ and summing over $i$ from $1$ to $n$ we get
\[
\begin{split}
&\frac12\dert \biggl( \|\dot{\vc y}_n\|_2^2+\frac1k\|\Delta\vc y_n\|_2^2+\frac1N \|\vc y_n\|_{H^2}^2\biggr)+ \dert\mint \Bigl(k\phi_2(H_M(\nabla\vc y_n))+\bar{\phi}_1(H_M(\nabla\vc y_n))\Bigr)\,\mathrm d\vc x\\
&= \mint \theta_n\dot{\eta}_1(H_M(\nabla\vc y_n))\,\mathrm d\vc x.
\end{split}
\]
Therefore, summing the last two inequalities and integrating in time between $0$ and $t\in(0,T)$ we get
\begingroup
\addtolength{\jot}{0.8em}
\beq
\label{es3d 2}
\begin{split}
\frac12\biggl(\|\dot{\vc y}_n\|_2^2&+\|\hat{\theta}_n\|^2_2+\frac1k\|\Delta\vc y_n\|_2^2+\frac1N  \|\vc y_n\|_{H^2}^2\biggr)(t)+\frac{c_m}2\tint\|\nabla\hat{\theta}_n\|^2_2\,\mathrm d\tau
\\
&+ \mint \Bigl(k\phi_2(H_M(\nabla\vc y_n(t)))+\bar{\phi}_1(H_M(\nabla\vc y_n(t)))\Bigr)\,\mathrm d\vc x
 \\
\leq  
 \mint \Bigl(&k\phi_2(H_M(\nabla\vc P_n\vc y_a))+\bar{\phi}_1(H_M(\nabla\vc P_n\vc y_a))\Bigr)\,\mathrm d\vc x\\
 +\mint&\eta_1(H_M(\nabla\vc y_n(t)))\theta_B\,\mathrm d\vc x-\mint\eta_1(H_M(\nabla\vc P_n\vc y_a))\theta_B\,\mathrm d\vc x+ ct\|\nabla\theta_B\|_2^2\\ 
&+\frac12\biggl(\|\vc P_n{\vc y}_b\|_2^2+\|\hat{\theta}_{0,n}\|^2_2+\frac1k\|\Delta\vc P_n\vc y_{a}\|_2^2+\frac1N  \|\vc P_n\vc y_a\|_{H^2}^2\biggr)
. 
\end{split}
\eeq
\endgroup
Now, we notice that by means of the assumptions on $\eta$ we can deduce
\[
\begin{split}
\biggl|\mint\eta_1(H_M(\nabla\vc P_n\vc y_a))\theta_B\,\mathrm d\vc x\biggr|\leq 
c\|\theta_B\|_{\frac{p}{p-r}}\bigl(1+\|H_M(\nabla\vc P_n\vc y_a)\|^r_{p}%^{\color{red}r}
\bigr)
\leq c\bigl(1+\|\theta_B\|_{\frac{p}{p-r}}^{\frac{p}{p-{r}}}\bigr) + \frac{c_0}{4}\|H_M(\nabla\vc P_n\vc y_a)\|_p^p\\
\leq c\bigl(1+\|\theta_B\|_{\frac{p}{p-r}}^{\frac{p}{p-{r}}}\bigr) + \frac{k}{4}\phi_2(H_M(\nabla\vc P_n\vc y_a)).
\end{split}
\]
In the same way,
\begin{align*}
\biggl|\mint\eta_1(H_M(\nabla\vc y_n (t)))\theta_B\,\mathrm d\vc x\biggr|&\leq c\bigl(1+\|\theta_B\|_{\frac{p}{p-r}}^{\frac{p}{p-{r}}}\bigr) + \frac{k}{4}\mint\phi_2(H_M(\nabla\vc y_n(t)))\,\mathrm{d}\vc x,\\
\biggl|\mint\bar{\phi}_1(H_M(\nabla\vc P_n\vc y_a))\,\mathrm d\vc x\biggr|&\leq c + \frac{k}{4}\mint\phi_2(H_M(\nabla\vc P_n\vc y_a))\,\mathrm{d}\vc x,\\
\biggl|\mint\bar{\phi}_1(H_M(\nabla\vc y_n (t)))\,\mathrm d\vc x\biggr|&\leq c + \frac{k}{4}\mint\phi_2(H_M(\nabla\vc y_n(t)))\,\mathrm{d}\vc x.
\end{align*}
Thus, \eqref{es3d 2} becomes
\begingroup
\addtolength{\jot}{0.3em}
\beq
\label{super stima energia}
\begin{split}
\biggl(\frac1k\|\Delta\vc y_n\|_2^2+\|\dot{\vc y}_n\|_2^2&+\|\hat{\theta}_n\|^2_2+\frac1N  \|\vc y_n\|_{H^2}^2\biggr)(t)+{c_m}\tint\|\nabla\hat{\theta}_n\|^2_2\,\mathrm d\tau + k\mint \phi_2(H_M(\nabla\vc y_n(t)))\,\mathrm d\vc x
\\
\leq \frac1k\|\Delta&\vc P_n\vc y_a\|_2^2+\|\vc P_n{\vc y}_b\|_2^2+\|\hat{\theta}_{0,n}\|^2_2 
+ ct\|\nabla\theta_B\|_2^2+c\bigl(1+\|\theta_B\|_{\frac{p}{p-r}}^{\frac{p}{p-{r}}}\bigr)\\
&+ 3k\mint \phi_2(H_M(\nabla\vc P_n\vc y_a)) \,\mathrm d\vc x+\frac1N  \|\vc P_n\vc y_a\|_{H^2}^2
%\\
%\leq \frac1k\|\Delta&\vc y_a\|_2^2+\|{\vc y}_b\|_2^2+\|{\theta}_{0}-\theta_B\|^2_2 + ct\|\nabla\theta_B\|_2^2+c\bigl(1+\|\theta_B\|_C^{\frac{p}{p-{\color{red}r}}}\bigr)\\
%&+ 3k\mint \phi_2(H_M(\nabla\vc P_n\vc y_a))\,\mathrm d\vc x + \frac1N\|\vc y_a\|_{H^2}
 . 
\end{split}
\eeq
\endgroup
We remark that as $\vc P_n\vc y_a\to\vc y_a$ strongly in $H^2(\Omega;\R^3)$, and $\vc P_n\vc y_b\to\vc y_b$ strongly in $L^2(\Omega;\R^3)$, the sequences $\|\vc P_n\vc y_a\|_{H^2}$, $\|\vc P_n\vc y_b\|_{2}$ are bounded. Therefore, by the boundedness of $H_M$(cf. \eqref{defH}), and thanks to the fact that $\phi_2$ is non-negative, we deduce that for every $T>0$ there exists a constant $\tilde C= \tilde C({T,M,k},N)$, independent of $n$, such that
\[
\frac1k\|\Delta\vc y_n\|_2^2(t)+\|\dot{\vc y}_n\|_2^2(t)+\|\hat{\theta}_n\|^2_2(t)+{c_m}\tint\|\nabla\hat{\theta}_n\|^2_2\,\mathrm d\tau +\frac{1}{N}  \|\vc y_n\|_{H^2}^2(t)
\leq \tilde C, %\qquad\text{ a.e. $t\in(0,T)$}. 
\]
for a.e. $t\in(0,T)$. 
%Since, again,
%\beq
%\label{coercivo 3d}
%k\mint \phi_2(\nabla\vc y_n(t))\,\mathrm d\vc x\geq c_0\|\nabla\vc y_n(t)\|_p^p-c,
%\eeq
%we deduce that
%\beq
%\label{big bund}	
%\begin{split}
%\frac1k\|\Delta\vc y_n&\|_{L^\infty(0,T;L^2)}+\|\dot{\vc y}_n\|_{L^\infty(0,T;L^2)}
%\\
%&+\|\hat{\theta}_n\|_{L^\infty(0,T;L^2)}+\|\nabla\hat{\theta}_n\|_{L^2(0,T;L^2)}+ \|\nabla\vc y_n(t)\|_{L^\infty(0,T;L^p)}
%\leq c_{T,k}. 
%\end{split}
%\eeq
Hence, there exists 
\begin{align*}
&y_M\in L^\infty(0,T;H^2(\Omega;\R^3))\cap W^{1,\infty}(0,T;L^2(\Omega;\R^3)),\qquad
&\theta_M\in L^2(0,T;H^1(\Omega))\cap L^\infty(0,T;L^2(\Omega)),
\end{align*}  such that, up to a subsequence,
\begin{align}
\label{3d conv1}
&\te_n\tow \te_M \quad \text{weakly  in } L^2(0,T;H^1(\Omega)),\quad \text{weakly$*$  in } L^\infty(0,T;L^2(\Omega)),\\
\label{3d conv4}
&\vc y_n\tow \vc y_M \quad \text{weakly$*$  in } L^\infty(0,T;H^2(\Omega;\R^3))\cap W^{1,\infty}(0,T;L^2(\Omega;\R^3)).
\end{align}
Furthermore, \eqref{3d conv4} together with a version of the Aubin-Lions lemma (see e.g., \cite[Thm. II.5.16]{Boyer}) imply
\beq
\label{3d conv5}
\vc y_n\to\vc y_M,\quad\text{strongly in $C([0,T];H^1(\Omega;\R^3))$}
,
\eeq
that is, up to a further subsequence, $\nabla\vc y_n\to\nabla\vc y_M$ for each $t\in[0,T]$, a.e. $\vc x\in\Omega$. Thus, as $H_M$ is continuous and bounded, by Vitali's convergence theorem $H_M(\nabla \vc y_n)\to H_M(\nabla \vc y_M)$ strongly in $L^{\tilde q}(0,T;L^{\tilde q}(\Omega))$ for each $\tilde q\in [1,\infty).$ 

We can thus multiply \eqref{approx 1a} and \eqref{approx 1b} by $\zeta,\xi\in C^1_c(-T,T)$ and integrate in time between $0$ and $T$. %$t\in(0,T]$. 
After an integration by parts these become 
\begin{align}
\label{approx 1a bis}
\begin{split}
\Tint\mint &\bigl(\zeta \mt C\nabla\theta_n\cdot\nabla\omega_j - \omega_j\dot{\zeta}(\eta_1(H_M(\nabla\vc y_n))+\theta_n)\bigr)\,\mathrm{d}\vc x\,\mathrm dt \\
&= \zeta(0) \mint \bigl(\eta_1(H_M(\nabla\vc P_n\vc y_a))+\theta_{0,n}\bigr)\omega_j\,\mathrm d\vc x,%\,\mathrm dt,
\end{split}
\\
\label{approx 1b bis}
\begin{split}
\Tint\mint &\biggl(\frac\xi k\Delta\vc y_n\cdot\Delta\boldsymbol\psi_i+\xi\frac{\partial\phi^k}{\partial\mt F}(H_M(\nabla\vc y_n),\theta_n)\frac{\partial H_M}{\partial\mt F}(\nabla\vc y_n):\nabla\boldsymbol\psi_i  \biggr)\,\mathrm d\vc x\,\mathrm dt 
\\ &=\Tint\mint\biggl(\dot\xi\dot{\vc y}_n\cdot\boldsymbol \psi_i-\frac\xi N \bigl(\boldsymbol\psi_i,\vc y_n\bigr)_{H^2} \biggr)\,\mathrm d\vc x\,\mathrm dt + \xi(0) \mint \vc P_n\vc y_b\cdot\boldsymbol\psi_i\,\mathrm d\vc x,%\,\mathrm dt,
\end{split}
\end{align}
for each $i,j=1,\dots,n$. Passage to the limit and recovering the equations for every $\omega\in H^1_D(\Omega)\cap C^1(\Omega)$, $\boldsymbol\psi\in C^1(\Omega;\R^3)\cap H^2(\Omega;\R^3)$ is standard. Thus, \eqref{approx 1a bis}--\eqref{approx 1b bis} become
\begin{align}
\label{approx 1a tris}
\Tint\mint &\bigl(\zeta \mt C\nabla\theta_M\cdot\nabla\omega - \omega\dot{\zeta}(\eta_1(H_M(\nabla\vc y_M))+\theta_M)\bigr)\,\mathrm{d}\vc x\,\mathrm dt = \zeta(0) \mint \bigl(\eta_1(H_M(\nabla \vc y_a))+\theta_{0}\bigr)\omega\,\mathrm d\vc x,%\,\mathrm dt,
\\
\label{approx 1b tris}
\begin{split}
\Tint\mint &\biggl(\frac\xi k\Delta\vc y_M\cdot\Delta\boldsymbol\psi+\xi\frac{\partial\phi^k}{\partial\mt F}(H_M(\nabla\vc y_M),\theta_M)\frac{\partial H_M}{\partial\mt F}(\nabla\vc y_M):\nabla\boldsymbol\psi  \biggr)\,\mathrm d\vc x\,\mathrm dt 
\\ &=\Tint\mint\biggl(\dot\xi\dot{\vc y}_M\cdot\boldsymbol \psi-\frac\xi N \bigl(\boldsymbol\psi_i,\vc y_n\bigr)_{H^2}\biggr)\,\mathrm d\vc x\,\mathrm dt + \xi(0) \mint \vc y_b\cdot\boldsymbol\psi\,\mathrm d\vc x,%\,\mathrm dt,
\end{split}
\end{align}
for every $\omega\in H^1_D(\Omega)\cap C^1(\Omega)$, $\boldsymbol\psi\in C^1(\Omega;\R^3)\cap \vc H^2(\Omega;\R^3)$, $\zeta,\xi\in C^1_c(-T,T)$. {Also, from \eqref{super stima energia} we can write
\[
\begin{split}
\biggl(\frac1k\|\Delta\vc y_n\|_2^2+\|\dot{\vc y}_n\|_2^2+\|\hat{\theta}_n\|^2_2+\frac1N  \|\vc y_n\|_{H^2}^2\biggr)(t)+{c_m}\tint\|\nabla\hat{\theta}_n\|^2_2\,\mathrm d\tau + k\mint \phi_2(H_M(\nabla\vc y_n(t)))\,\mathrm d\vc x
\\
\leq
\frac1k\|\Delta\vc y_n\|_{L^\infty(0,T;L^2)}^2+
\frac{1}{N}\| \vc y_n\|_{L^\infty(0,T;H^2)}^2+\|\dot{\vc y}_n\|^2_{L^\infty(0,T;L^2)}
+\|\hat{\theta}_n\|^2_{L^\infty(0,T;L^2)}+c_m\|\nabla\hat{\theta}_n\|^2_{L^2(0,T;L^2)}
\\+\esssup_{t\in(0,T)}k\mint \phi_2(H_M(\nabla\vc y_n(t)))\,\mathrm d\vc x
\leq 6\Bigl(\frac1k\|\Delta\vc P_n\vc y_a\|_2^2
+ c_T
+ 3k\mint \phi_2(H_M(\nabla\vc P_n\vc y_a)) \,\mathrm d\vc x+\frac1N  \|\vc P_n\vc y_a\|_{H^2}^2\Bigr).
\end{split}
\]
Therefore, a passage to the limit as $n\to\infty$ in \eqref{super stima energia} and the weak$*$ lower semicontinuity of the norms leads to
\beq
\label{energia inmezzoN}
\begin{split}
\frac1k\|\Delta\vc y_M\|_{L^\infty(0,T;L^2)}^2+
\frac{1}{N}\| \vc y_M\|_{L^\infty(0,T;H^2)}^2+\|\dot{\vc y}_M\|^2_{L^\infty(0,T;L^2)}
+\|\hat{\theta}_M\|^2_{L^\infty(0,T;L^2)}+c_m\|\nabla\hat{\theta}_M\|^2_{L^2(0,T;L^2)}
\\+\esssup_{t\in(0,T)}\mint \phi_2(H_M(\nabla\vc y_M(t)))\,\mathrm d\vc x
\leq 6\Bigl(c_T+\frac1N\|\vc y_a\|^2_{H^2} +\frac1k\|\Delta\vc y_a\|^2_2
+ 3k \mint \phi_2(H_M(\nabla \vc y_a))\,\mathrm d\vc x\Bigr)
.%\qquad\text{a.e. $t\in(0,T)$}.
 \end{split} 
\eeq
Here we also used the fact that, defined $f_n:=\mint \phi_2(H_M(\nabla\vc y_n(t)))\,\mathrm d\vc x$, we have that, up to a further subsequence $f_n$ converges weakly$*$ to some $f\in L^\infty(0,T)$, and therefore $\esssup_{t\in(0,T)} f_n \geq \esssup_{t\in(0,T)} f.$ But since for each $t\in[0,T]$ we have $\nabla\vc y_n\to\nabla \vc y_M$ a.e., we also have 
$$
\mint \phi_2(H_M(\nabla\vc y_n(t)))\,\mathrm d\vc x\to \mint \phi_2(H_M(\nabla\vc y_M(t)))\,\mathrm d\vc x,\qquad\text{for each $t\in[0,T]$,}
$$
and thus $f = \mint \phi_2(H_M(\nabla\vc y_M(t)))\,\mathrm d\vc x$ as claimed.
} 
Now, given the fact that $\vc y_a\in W^{1,p}(\Omega;\R^3)$, we have that $H_M(\nabla\vc y_a)$ converges strongly to $\nabla\vc y_a$ as $M\to\infty$ by dominated convergence in $L^p$, and hence, by \eqref{stime crescita}, $\phi_2(H_M(\nabla\vc y_a))$ converges strongly in $L^1$ to $\phi_2(\nabla\vc y_a)$. Therefore, inequality \eqref{energia inmezzoN} yields 
\beq
\label{big bund2}	
\begin{split}
\| \vc y_M\|_{L^\infty(0,T;H^2)}+\|\dot{\vc y}_M\|_{L^\infty(0,T;L^2)}
+\|\hat{\theta}_M\|_{L^\infty(0,T;L^2)}+\|\nabla\hat{\theta}_M\|_{L^2(0,T;L^2)}
\\+\esssup_{t\in(0,T)}\mint \phi_2(H_M(\nabla\vc y_M(t)))\,\mathrm d\vc x
\leq c_{T,k,N}. 
\end{split}
\eeq
We can hence deduce the existence of
$$
\vc y_N\in L^\infty(0,T;H^2(\Omega;\R^3))\cap W^{1,\infty}(0,T;L^2(\Omega;\R^3)),\qquad
\theta_N\in L^2(0,T;H^1(\Omega))\cap L^\infty(0,T;L^2(\Omega)),
$$
such that, up to a subsequence, $\theta_M\to\theta_N$ and $\vc y_M\to\vc y_N$ as $M\to\infty$ in the sense of \eqref{3d conv1}--\eqref{3d conv5}. Furthermore, by \eqref{big bund2} and \eqref{stime crescita} we also have
\beq
\label{bb 2}
\|H_M(\vc y_M)\|_{L^\infty(0,T;L^p)}
\leq c_{T,k,N}. 
\eeq
Therefore, as $H_M(\nabla\vc y_M)\to \nabla\vc y_N$ for a.e. $(x,t)\in\Omega\times(0,T)$, by Vitali's theorem we deduce
\beq 
\label{unaltraconv}
H_M(\nabla \vc y_M)\to \nabla\vc y_N,\quad\text{strongly in $L^{\bar q}(\Omega\times(0,T))$}
,
\eeq
for every $\bar q\in(1,p)$. We remark that Fatou's lemma and the fact that $H_M(\nabla \vc y_M)\to \nabla\vc y_N$ for a.e. $(\vc x,t)\in\Omega\times(0,T)$, imply
$$
\mint \phi_2(\nabla\vc y_N(\vc x,t)) \,\mathrm d\vc x\leq \liminf_{M\to\infty}\mint \phi_2(H_M(\nabla \vc y_M(\vc x,t))) \,\mathrm d\vc x,\qquad\text{for a.e $t\in(0,T)$}.
$$ 
This inequality, together with the lower semicontinuity of the norms, and the convergences \eqref{3d conv1}--\eqref{3d conv5} entail
\beq
\label{energia inmezzo N2}
\begin{split}
\frac1k\|\Delta\vc y_N\|_{L^\infty(0,T;L^2)}^2+
\frac{1}{N}\| \vc y_N\|_{L^\infty(0,T;H^2)}^2+\|\dot{\vc y}_N\|^2_{L^\infty(0,T;L^2)}
+\|{\theta}_N\|^2_{L^\infty(0,T;L^2)}+\|\nabla{\theta}_N\|^2_{L^2(0,T;L^2)}
\\+\esssup_{t\in(0,T)} k \mint \phi_2(\nabla\vc y_N(t))\,\mathrm d\vc x
\leq c\Bigl(C^*+\frac1N\|\vc y_a\|^2_{H^2} +\frac1k\|\Delta\vc y_a\|^2_2
+ 3k \mint \phi_2(\nabla \vc y_a)\,\mathrm d\vc x\Bigr),
%\\
%\Bigl(\frac1N\|\vc y_N\|_{H^2}^2+\frac1k\|\Delta\vc y_N\|_2^2+\|\dot{\vc y}_N\|_2^2+\|{\theta}_N\|^2_2\Bigr)(t)+{c_m}\tint\|\nabla{\theta}_N\|^2_2\,\mathrm d\tau+ k\mint \phi_2(\nabla\vc y_N(t))\,\mathrm d\vc x\\
%\leq C^*_T+\frac1N\|\vc y_a\|^2_{H^2} +\frac1k\|\Delta\vc y_a\|^2_2
%+ 3k \mint \phi_2(\nabla \vc y_a)\,\mathrm d\vc x
%,\qquad\text{a.e. $t\in(0,T)$},
 \end{split} 
\eeq
for some $C^*>0$ depending on $\|\theta_B\|_{H^1},\|\theta_B\|_{\frac{p}{p-r}},\|\vc y_b\|_{2}$, $\|\theta_0\|_2$ and $T$ only. By \eqref{stime crescita}, together with \eqref{energia inmezzo N2}, we thus have also 
$$\vc y_N \in L^\infty(0,T;W^{1,p}(\Omega;\R^3)).$$
The assumptions in \eqref{stime crescita}, and the bound on $\frac{\partial H_M}{\partial\mt F}$, together with \eqref{unaltraconv} imply also that
\begin{align}
\label{convergenze derivate 1}
\frac{\partial\phi_2}{\partial\mt F}(H_M(\nabla \vc y_M))\frac{\partial H_M}{\partial\mt F}(\nabla \vc y_M)\to 
\frac{\partial\phi_2}{\partial\mt F}(\nabla \vc y_N),&\qquad\text{strongly in $L^1(\Omega;\R^{3\times3})$,}\\
\label{convergenze derivate 2}
\frac{\partial\bar\phi_1}{\partial\mt F}(H_M(\nabla \vc y_M))\frac{\partial H_M}{\partial\mt F}(\nabla \vc y_M)\to 
\frac{\partial\bar\phi_1}{\partial\mt F}(\nabla \vc y_N),&\qquad\text{strongly in $L^1(\Omega;\R^{3\times3})$.}
\end{align}
Hence, passage to the limit as $M\to\infty$ in \eqref{approx 1a tris}--\eqref{approx 1b tris} is standard thanks to \eqref{3d conv1}--\eqref{3d conv5} together with \eqref{convergenze derivate 1}--\eqref{convergenze derivate 2}. The only difficulty is to show that, defining $$R_M:=\frac{\partial\eta_1}{\partial\mt F}(H_M(\nabla\vc y_M))\frac{\partial H_M}{\partial\mt F}(\nabla\vc y_M),$$ we have
\[
\begin{split}
\biggl|\Tint\mint \xi\nabla\boldsymbol\psi&:\Bigl(\frac{\partial \eta _1}{\partial\mt F}
(\nabla\vc y_N)\theta_N - R_M\theta_M\Bigr)\,\mathrm{d}\vc x\,\mathrm dt\biggr|
\\
&=
\biggl|\Tint\mint \xi\nabla\boldsymbol\psi:\Bigl(\frac{\partial\eta_1}{\partial\mt F}(\nabla\vc y_N)(\theta_N-\theta_M)+ \theta_M  \Bigl(\frac{\partial\eta_1}{\partial\mt F}(\nabla\vc y_N) - R_M \Bigr) \Bigr)\,\mathrm{d}\vc x\,\mathrm dt\biggr|
\to 0,
\end{split}
\]
as $M\to\infty$, for every $\boldsymbol\psi\in C^1(\Omega;\R^3)\cap H^1(\Omega;\R^3)$, every $\xi\in C^1_c(-T,T)$. On the one hand,
$$
\biggl|\Tint\mint \xi\nabla\boldsymbol\psi:\Bigl(\frac{\partial\eta_1}{\partial\mt F}(\nabla\vc y_N)(\theta_N-\theta_M) \Bigr)\,\mathrm{d}\vc x\,\mathrm dt\biggr|
\to 0,\qquad\text{as $M\to\infty$,}
$$
because of the fact that $\theta_M\to\theta_N$ weakly in $L^2(0,T;L^6(\Omega))$ as $M\to\infty$, and that \eqref{stime crescita} together with $\nabla\vc y_N \in L^\infty(0,T;L^p(\Omega;\R^{3\times3}))$ imply $\frac{\partial\eta_1}{\partial\mt F}(\nabla\vc y_N) \in L^\infty(0,T;L^\frac65(\Omega))$. On the other hand, 
\[
\begin{split}
\biggl|\Tint\mint \xi\nabla\boldsymbol\psi &:\Bigl(\theta_M  \Bigl(R_M - \frac{\partial\eta_1}{\partial\mt F}(\nabla\vc y_N)\Bigr) \Bigr)\,\mathrm{d}\vc x\,\mathrm dt\biggr|\\
&\leq \|\theta_M\|_{L^2(0,T;L^6)}\|\boldsymbol\psi\|_{C^1}\|\xi\|_{C^1}\Bigl\|\frac{\partial\eta_1}{\partial\mt F}(\nabla\vc y_N) - R_M\Bigr\|_{L^2(0,T;L^{\frac65})}
\to 0
\end{split}
\]
because of \eqref{unaltraconv}. In the limit, \eqref{approx 1a tris}--\eqref{approx 1b tris} thus become
\begin{align}
\label{approx 1a quater}
\Tint\mint &\bigl(\zeta \mt C\nabla\theta_N\cdot\nabla\omega - \omega\dot{\zeta}(\eta_1(\nabla\vc y_N)+\theta_N)\bigr)\,\mathrm{d}\vc x\,\mathrm dt = \zeta(0) \mint \bigl(\eta_1(\nabla \vc y_a)+\theta_{0}\bigr)\omega\,\mathrm d\vc x,%\,\mathrm dt,
\\
\label{approx 1b quater}
\begin{split}
\Tint\mint &\biggl(\frac\xi k\Delta\vc y_N\cdot\Delta\boldsymbol\psi+\xi\nabla\boldsymbol\psi :\frac{\partial\phi^k}{\partial\mt F}(\nabla\vc y_N,\theta_N)  \biggr)\,\mathrm d\vc x\,\mathrm dt 
\\ &=\Tint\mint\biggl(\dot\xi\dot{\vc y}_N\cdot\boldsymbol \psi-\frac\xi N \bigl(\boldsymbol\psi,\vc y_N\bigr)_{H^2} \biggr)\,\mathrm d\vc x\,\mathrm dt + \xi(0) \mint \vc y_b\cdot\boldsymbol\psi\,\mathrm d\vc x,%\,\mathrm dt,
\end{split}
\end{align}
for every $\omega\in H^1_D(\Omega)\cap C^1(\Omega)$, $\boldsymbol\psi\in C^1(\Omega;\R^3)\cap H^2(\Omega;\R^3)$, $\zeta,\xi\in C^1_c(-T,T)$. 
Now, we want to send $N$ to infinity. To this end, we first suppose $N\geq N_*$, where $N_* = N_*(\|\vc y_a\|_{H^2})$ is the smallest integer such that $N_*^{-1}\|\vc y_a\|_{H^2}^2\leq 1$. In this case, by \eqref{energia inmezzo N2} we have
\beq
\label{altrodasootyn}
\|\nabla\vc y_N\|_p^p(t)-c\leq \mint \phi_2(\nabla\vc y_N)\,\mathrm{d}\vc x\leq c_{T,k},\qquad\text{a.e. $t\in(0,T)$}.
\eeq
Therefore, \eqref{altrodasootyn} together with \eqref{energia inmezzo N2} entail
\beq
\label{energia inmezzo N3}
\begin{split}
\Bigl(\frac1N\|\vc y_N\|_{H^2}^2+\|\Delta\vc y_N\|_2^2+\|\dot{\vc y}_N\|_2^2+\|\nabla\vc y_N\|_p^p+\|{\theta}_N\|^2_2\Bigr)(t)+{c_m}\tint\|\nabla{\theta}_N\|^2_2\,\mathrm d\tau
\leq c_{T,k}
,%\qquad\text{a.e. $t\in(0,T)$}.
 \end{split} 
\eeq
for a.e. $t\in(0,T)$. Let $\{\vc e_1,\vc e_2,\vc e_3\}$ be a Cartesian coordinate system for $\R^3$. We can choose $\boldsymbol\psi =\vc e_i$ with $i=1,\dots,3$ in \eqref{approx 1b quater} and deduce that
$$
\frac{d^2}{dt^2}\mint \vc y_N(t)\cdot\vc e_i\,\mathrm d\vc x = - \frac{1}{N}\mint \vc y(t)\cdot\vc e_i\,\mathrm d\vc x,
$$
for a.e. $t\in(0,T)$. This implies,
$$
\mint \vc y_N(t)\cdot\vc e_i\,\mathrm d\vc x = \cos\Bigl(\frac{t}{\sqrt N}\Bigr)\mint \vc y_a\cdot\vc e_i\,\mathrm d\vc x +\sqrt{N}\sin\Bigl(\frac{t}{\sqrt N}\Bigr)\mint \vc y_b\cdot\vc e_i\,\mathrm d\vc x,
$$
and therefore, for every $T$ finite, we deduce that
\beq
\label{bound media S}
\bigg|\mint \vc y_N(t)\cdot\vc e_i\,\mathrm d\vc x\bigg| \leq  \bigg|\mint \vc y_a\cdot\vc e_i\,\mathrm d\vc x\bigg| + T\bigg|\mint \vc y_b\cdot\vc e_i\,\mathrm d\vc x\bigg| \leq c_T,
\eeq
where, again, $c_T$ is independent of $N,k$. Therefore, an application of the Poincar\'e inequality, together with \eqref{altrodasootyn} leads also to
\beq
\label{N3 con P}
\|\vc y_N\|_{W^{1,p}}(t)\leq c_{T,k},\qquad\text{a.e. $t\in(0,T)$}.
\eeq
Therefore, from \eqref{energia inmezzo N3}--\eqref{N3 con P} we deduce the existence of 
$$
\vc y \in L^\infty(0,T;\vc V\cap W^{1,p}(\Omega;\R^3))\cap W^{1,\infty}(0,T;L^2(\Omega;\R^3)),\qquad
\theta \in L^2(0,T;H^1(\Omega))\cap L^\infty(0,T;L^2(\Omega)),
$$
and a non-relabelled subsequence, such that  
\begin{align}
\label{3d conv1 end}
&\te_N\tow \te \quad \text{weakly  in } L^2(0,T;H^1(\Omega)),\quad \text{weakly$*$  in } L^\infty(0,T;L^2(\Omega)),\\
\label{3d conv4 end}
&\vc y_N\tow \vc y \quad \text{weakly$*$  in } L^\infty(0,T;\vc V\cap W^{1,p}(\Omega;\R^3))\cap W^{1,\infty}(0,T;L^2(\Omega;\R^3)).
\end{align}
Furthermore, from the fact that $p>2$, and Lemma \ref{lemmacomp0} we known that the Banach space $\vc V\cap W^{1,p}(\Omega;\R^3)$ is compact in $H^1(\Omega;\R^3)$. Hence, by \eqref{3d conv1 end}--\eqref{3d conv4 end} and the Aubin-Lions lemma, $\vc y_N$ converges to $\vc y$ also in the sense of \eqref{3d conv5}. This, together with \eqref{energia inmezzo N3}, \eqref{N3 con P} and Vitali's theorem, lead
\beq
\label{ennsima forte}
\nabla\vc y_N\to  \nabla\vc y,\qquad\text{strongly in $L^{\bar q}(\Omega\times(0,T))$},
\eeq
for every $\bar q < p$. We can thus pass again to the limit in \eqref{energia inmezzo N2}, and deduce 
\[
\begin{split}
\frac1k\|\Delta\vc y\|_{L^\infty(0,T;L^2)}^2
+\|\dot{\vc y}\|^2_{L^\infty(0,T;L^2)}
&+\|{\theta}\|^2_{L^\infty(0,T;L^2)}+\|\nabla{\theta}\|^2_{L^2(0,T;L^2)}
+\esssup_{t\in(0,T)} k \mint \phi_2(\nabla\vc y(t))\,\mathrm d\vc x
\\
&\leq c\Bigl(C^* +\frac1k\|\Delta\vc y_a\|^2_2
+ 3k \mint \phi_2(\nabla \vc y_a)\,\mathrm d\vc x\Bigr),
\end{split}
\] 
a.e. $t\in(0,T)$. Now the Poincar\'e inequality, together with \eqref{altrodasootyn} lead to
$$
\mint \phi_2(\nabla\vc y)\,\mathrm{d}\vc x\geq \|\nabla\vc y\|_p^p-c\geq c_0 \|\vc y\|_{W^{1,p}}^p-c -c\bigg|\mint \vc y\,\mathrm{d}\vc x \bigg|^p\geq c_0 \|\vc y\|_{W^{1,p}}^p-C^*_T - c\bigg|\mint \vc y_a\,\mathrm{d}\vc x \bigg| ^p,
$$
where we made use of \eqref{bound media S} in the last inequality. Combining the last two estimates we thus deduce \eqref{che bund 3d}. Furthermore, by arguing as above, we can use \eqref{stime crescita} and \eqref{ennsima forte} to pass to the limit as $N\to\infty$ in \eqref{approx 1a quater}--\eqref{approx 1b quater} and deduce \eqref{equazione 1a}--\eqref{equazione 1b}. Here, the only additional difficulty is to pass to the limit in the term $(\vc y_N,\boldsymbol\psi)_{H^2}$, which, thanks to \eqref{energia inmezzo N3} can be treated as follows
$$
\frac{1}{N}\bigl |(\vc y_N,\boldsymbol\psi)_{H^2} \bigr| \leq  \frac{1}{N}\|\vc y_N\|_{H^2}\|\boldsymbol\psi\|_{H^2}\leq \frac{\sqrt{c_{T,k}}}{\sqrt{N}}\|\boldsymbol\psi\|_{H^2}\to 0,\quad\text{as $N\to\infty$, a.e. $t\in(0,T)$.}
$$
Now, the existence result for a generic initial datum $\vc y_a\in \vc V\cap W^{1,p}(\Omega;\R^3)$ can be recovered as  follows: we take a sequence of $\vc y_a^{(j)}\in H^2(\Omega;\R^3)\cap W^{1,p}(\Omega;\R^3)$ converging strongly in $\vc V\cap W^{1,p}(\Omega;\R^3)$ to $\vc y_a$. In this case, 
\beq\label{con j}
\mint \phi_2(\nabla \vc y_{a}^{(j)})\leq c,\qquad \mint \phi_2(\nabla \vc y_{a}^{(j)})\to\mint \phi_2(\nabla \vc y_{a}),
\eeq
so thanks to \eqref{che bund 3d}, the sequence of solutions $(\theta_j,\vc y_j)$ related to $\vc y_a^{(j)}$ satisfies
\beq
\label{energia inmezzo N4}
\begin{split}
\Bigl(\|\Delta\vc y_j\|_2^2+\|\dot{\vc y}_j\|_2^2+\|\nabla\vc y_j\|_{W^{1,p}}^p+\|{\theta}_j\|^2_2\Bigr)(t)+{c_m}\tint\|\nabla{\theta}_j\|^2_2\,\mathrm d\tau
\leq c_{T,k}
,\qquad\text{a.e. $t\in(0,T)$}.
 \end{split} 
\eeq
Therefore we can deduce the existence of 
$$
\vc y \in L^\infty(0,T;\vc V\cap W^{1,p}(\Omega;\R^3))\cap W^{1,\infty}(0,T;L^2(\Omega;\R^3)),\qquad
\theta \in L^2(0,T;H^1(\Omega))\cap L^\infty(0,T;L^2(\Omega)),
$$
such that, up to a non relabelled subsequence, $(\theta_j,\vc y_j)$ converge to $(\theta,\vc y)$ in the sense of \eqref{3d conv1 end}--\eqref{ennsima forte}. Passage to the limit to recover that $(\theta,\vc y)$ satisfy \eqref{che bund 3d}, \eqref{equazione 1a}--\eqref{equazione 1b} follows the steps above.\\

Finally, by comparison in \eqref{equazione 1b} we also know that $$\ddot{\vc y}\in L^2(0,T;(W^{1,\tilde p}(\Omega;\R^3)\cap \vc V)^*
).$$ Hence, by \cite[Prop. II.5.11]{Boyer} and \cite[Lemma II.5.9]{Boyer} we deduce $\dot{\vc y}\in C([0,T]; L^2_w(\Omega;\R^3))$. On the other hand, \eqref{equazione 1a} implies also $\dot{\theta}+\dot\eta_1(\nabla\vc y) \in L^2(0,T;H^{-1}_D(\Omega))$. As by \eqref{stime crescita} and $\vc y\in L^\infty(0,T;W^{1,p}(\Omega;\R^3))$,
$$
\esssup_{t\in(0,T)}\mint\big|\eta(\nabla\vc y_n(t))\big|^{\frac p {r}}\,\mathrm d\vc x \leq c + \esssup_{t\in(0,T)}\mint\big|\nabla\vc y_n\big|^p\,\mathrm d\vc x\leq c_{T,k},
$$
it must hold
$\eta_1(\nabla\vc y) \in L^\infty(0,T;L^{p/{r}}(\Omega))$. Therefore, again by \cite[Prop. II.5.11]{Boyer} and \cite[Lemma II.5.9]{Boyer} we thus deduce that $$\theta+\eta_1(\nabla\vc y)\in C([0,T]; L^{\min{\{2,p/{r}}\}}_w(\Omega)).$$
\end{proof}

\section{Convergence as $k\to\infty$ to a limiting constrained theory}
\label{inelali}
{%\color{red}
As mentioned above, in this section we send $k$ to $\infty$ in \eqref{equazione 1a}--\eqref{equazione 1b}, and obtain in the limit a constrained theory for the deformation gradient $\nabla\vc y$. This is equivalent to assuming that elastic constants tend to infinity, which, as remarked in \cite{BallChuJames}, is usually a reasonable approximation when studying martensitic phase transitions with no external (or at least small) load. In this way we capture the essential behaviour of a generic free energy satisfying the properties listed in Section \ref{nonlonear elastic}, and neglect at the same time all aspects depending on the growth of the energy density. 
In the limit, the weak formulation of the energy conservation equation \eqref{equazione 1a} becomes a heat equation with a heat source at the austenite-martensite phase interface. This heat source is proportional to the difference of entropy between martensite and austenite, that is to the latent heat, is concentrated at the phase boundary, which might be sharp or diffuse, and depends also on the velocity of the phase transition. Where the transition is from austenite to martensite, the heat source has a positive sign, while its sign is negative where the transition is from martensite to austenite. The deformation gradient $\nabla\vc y_k$ generates as $k$ tends to infinity a Gradient Young Measure supported on $K\cup SO(3)$. As shown in Remark \ref{equaz 1b limit} the evolution of the obtained Gradient Young Measure cannot be deduced from the conservation of momentum alone.\\
}

In what follows we will make use of Young Measures, for which we refer the reader to \cite{BallYM,BallKoumatosQC,Pedregal} and references therein. Below, the space of Young Measures is denoted by $L^\infty_{w*}(\Omega\times(0,T);\mathcal M_1(\R^{3\times 3}))$, where $\mathcal M_1(\R^{3\times 3})$ is the space of probability measures on $\R^{3\times 3}$ and where the subscript $w^*$ stands for the fact that we endow this space with the weak$*$ topology. %%%%%%%%%PROPOSITION%%%%%%%%%%%%%%%%%%%%%%%%
\begin{proposition}
\label{prop limit model}
Let %$\Omega\subset\R^3$ be smooth, 
$T>0$, let $(\theta_k, \vc y_k)$ be a solution of \eqref{equazione 1a}--\eqref{equazione 1b} given by Theorem \ref{esiste 3d} and such that $\vc y|_{t=0}=\vc y_{a,k}\in W^{1, p}(\Omega,\R^3)\cap \vc V$. %{\color{red}in an appropriate weak sense}. 
Assume also $\vc y_{a,k}$ to be such that 
$$\mint \phi_2(\nabla\vc y_{a,k})\leq c k^{-1},\qquad\bigg|\mint \vc y_{a,k}\,\mathrm{d}\vc x \bigg|\leq c,\qquad \|\Delta\vc y_{a,k}\|_2\leq ck
$$ 
for every $k\geq1$, and for some positive constant $c$ independent of $k$. Then there exist 
\beq
\label{funzlimit 0}
\begin{split}
&\vc y\in L^\infty(0,T;W^{1,p}(\Omega;\R^3))\cap W^{1,\infty}(0,T;L^2(\Omega;\R^3)),\\
&\theta\in L^2(0,T;H^1(\Omega))\cap L^\infty(0,T;L^2(\Omega)),
\end{split}
\eeq
and $\nu_{\vc x,t}\in L^\infty_{w^*}(\Omega\times(0,T);\mathcal{M}_1(\R^{3\times3})),\nu_{\vc x,0}\in L^\infty_{w^*}(\Omega;\mathcal{M}_1(\R^{3\times3}))$ such that, up to a subsequence, 
\beq
\label{refff}
\begin{split}
&\vc y_k \tow \vc y,\qquad\text{weakly$*$ in }L^\infty(0,T;W^{1,p}(\Omega;\R^{3\times3}))%,\qquad\text{ as }k\to\infty,
,\\
&\theta_k \tow \theta,\qquad\text{weakly in }L^2(0,T;H^{1}(\Omega))%,\qquad\text{ as }k\to\infty,
,\\
&\delta_{\nabla\vc y_k(\vc x,t)} \tow \nu_{\vc x,t},\qquad\text{weakly$*$ in }L^\infty_{w^*}(\Omega\times(0,T);\mathcal{M}_1(\R^{3\times3})),\\%,\qquad\text{ as }k\to\infty
&\delta_{\nabla\vc y_{a,k}(\vc x)} \tow \nu_{\vc x,0},\qquad\text{weakly$*$ in }L^\infty_{w^*}(\Omega;\mathcal{M}_1(\R^{3\times3})).
\end{split}
\eeq
Furthermore, $\nu_{\vc x,t},\nu_{\vc x,0}$ satisfy
\beq
\label{supporto phi2}
\begin{split}
&\supp \nu_{\vc x,t} \subset K\cup SO(3),\qquad \text{for a.e. $(\vc x,t) \in\Omega\times(0,T)$},\\
&\supp \nu_{\vc x,0} \subset K\cup SO(3),\qquad \text{for a.e. $\vc x \in\Omega$},
\end{split}
\eeq
and $(\theta,\nabla \vc y,\nu_{\vc x,t})$ are satisfying the following weak formulation of the equation governing the conservation of energy
\beq
\label{cons ener limit}
\begin{split}
\Tint\mint \zeta\mt C\nabla\theta\cdot\nabla\omega \,\mathrm{d}\vc x = \Tint\mint \biggl(\theta_T\int_{\R^{3\times3}} \eta_1(\mt A)\,\mathrm{d}\nu_{\vc x,t}(\mt A)+\gamma\rho_0\theta\biggr)\dot{\zeta}\omega \,\mathrm d\vc x\mathrm dt\\
+\zeta(0)\mint\omega\biggl(\theta_T\int_{\R^{3\times3}} \eta_1(\mt A)\,\mathrm{d}\nu_{\vc x,0}(\mt A)+\gamma\rho_0\theta_0\biggr)\,\mathrm d\vc x,
\end{split}
\eeq
for each $\zeta\in C^1_c(-T,T)$, $\omega\in C^1(\Omega)\cap H^1_D $, and
\beq
\label{equazione 1c}
\int_{\R^{3\times3}} \mt A \,\mathrm{d}\nu_{\vc x,t}(\mt A) = \nabla\vc y (\vc x, t),\qquad\text{a.e. $(\vc x,t)\in \Omega\times(0,T)$}.
\eeq 
Also $\theta=\theta_B$ on $\partial\Omega_D$ for a.e. $t\in(0,T)$ and $\dot\theta+\frac{\partial}{\partial t}\int_{\R^{3\times3}}\eta(\mt A)\nu_{\vc x,t}(\mt A)\in L^2(0,T;H^{-1}_D)$.
\end{proposition}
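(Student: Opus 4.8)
The plan is to read off uniform-in-$k$ bounds from Theorem~\ref{esiste 3d}, extract weak limits together with the Young measure generated by $\nabla\vc y_k$, and then pass to the limit \emph{only} in the weak formulation \eqref{equazione 1a} of the energy balance. I would not try to pass to the limit in \eqref{equazione 1b}: there $\frac{\partial\phi^k}{\partial\mt F}=\frac{\partial\phi_1}{\partial\mt F}+k\frac{\partial\phi_2}{\partial\mt F}$, and the estimates control $k\,\phi_2(\nabla\vc y_k)$ but not $k\,\frac{\partial\phi_2}{\partial\mt F}(\nabla\vc y_k)$ --- which is exactly why the evolution of $\nu_{\vc x,t}$ cannot be read off from conservation of momentum (see Remark~\ref{equaz 1b limit}). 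The first step is a priori bounds: using $\phi_2(\mt F)\geq c_0|\mt F|^p-c_1$, the hypothesis $\mint\phi_2(\nabla\vc y_{a,k})\leq ck^{-1}$ yields $\|\nabla\vc y_{a,k}\|_p\leq c$, hence, with $|\mint\vc y_{a,k}\,\mathrm d\vc x|\leq c$ and the Poincar\'e inequality, $\|\vc y_{a,k}\|_{W^{1,p}}\leq c$; the three assumptions on $\vc y_{a,k}$ are tailored so that the right-hand side of \eqref{che bund 3d} is bounded uniformly in $k$, giving
\begin{equation}\label{unif k}
\begin{split}
\|\theta_k\|_{L^\infty(0,T;L^2)}+\|\nabla\theta_k\|_{L^2(0,T;L^2)}+\|\dot{\vc y}_k\|_{L^\infty(0,T;L^2)}+\|\vc y_k\|_{L^\infty(0,T;W^{1,p})}&\leq c,\\
k\esssup_{t\in(0,T)}\mint\phi_2(\nabla\vc y_k(\vc x,t))\,\mathrm d\vc x&\leq c.
\end{split}
\end{equation}

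Next I would extract the limits. By Banach--Alaoglu, along a subsequence one obtains $\vc y$ and $\theta$ as in \eqref{funzlimit 0} (regularity from weak lower semicontinuity of the norms), with the first two convergences in \eqref{refff} and with $\dot{\vc y}_k\tow\dot{\vc y}$ weakly$*$ in $L^\infty(0,T;L^2(\Omega;\R^3))$. Since $\nabla\vc y_k$ is bounded in $L^p(\Omega\times(0,T);\R^{3\times3})$ with $p>2$, it is tight, so along a further subsequence it generates a Young measure $\nu_{\vc x,t}$ (third convergence in \eqref{refff}), and the same construction applied to the bounded sequence $\nabla\vc y_{a,k}$ in $L^p(\Omega;\R^{3\times3})$ produces $\nu_{\vc x,0}$ (fourth convergence). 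Since $\phi_2\geq0$ is continuous and, by \eqref{unif k}, $\phi_2(\nabla\vc y_k)\to0$ in $L^1(\Omega\times(0,T))$, lower semicontinuity for Young measures gives $\Tint\mint\int_{\R^{3\times3}}\phi_2(\mt A)\,\mathrm d\nu_{\vc x,t}(\mt A)\,\mathrm d\vc x\,\mathrm dt\leq\liminf_k\Tint\mint\phi_2(\nabla\vc y_k)\,\mathrm d\vc x\,\mathrm dt=0$, hence $\int_{\R^{3\times3}}\phi_2(\mt A)\,\mathrm d\nu_{\vc x,t}(\mt A)=0$ for a.e. $(\vc x,t)$; as $\{\phi_2=0\}=K\cup SO(3)$ this is the first inclusion in \eqref{supporto phi2}, and the same argument with $\phi_2(\nabla\vc y_{a,k})\to0$ in $L^1(\Omega)$ gives the second. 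For \eqref{equazione 1c}: $\mt A\mapsto\mt A$ has linear growth and $\nabla\vc y_k$ is $p$-equiintegrable, so the fundamental theorem of Young measures gives $\nabla\vc y_k\tow\int_{\R^{3\times3}}\mt A\,\mathrm d\nu_{\vc x,t}(\mt A)$ weakly in $L^1$; comparing with $\nabla\vc y_k\tow\nabla\vc y$ weakly in $L^p$ identifies the barycentre with $\nabla\vc y$.

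It then remains to pass to the limit in \eqref{equazione 1a}. The only nonlinear term is $\eta_1(\nabla\vc y_k)$: by \eqref{stime crescita}, $|\eta_1(\mt F)|\leq c_d(1+|\mt F|^r)$ with $r<p$, so $\eta_1(\nabla\vc y_k)$ is bounded in $L^{p/r}(\Omega\times(0,T))$ with $p/r>1$, hence equiintegrable, and the fundamental theorem of Young measures gives $\eta_1(\nabla\vc y_k)\tow\int_{\R^{3\times3}}\eta_1(\mt A)\,\mathrm d\nu_{\vc x,t}(\mt A)$ weakly in $L^{p/r}(\Omega\times(0,T))$; since $\|\vc y_{a,k}\|_{W^{1,p}}\leq c$, likewise $\eta_1(\nabla\vc y_{a,k})\tow\int_{\R^{3\times3}}\eta_1(\mt A)\,\mathrm d\nu_{\vc x,0}(\mt A)$ weakly in $L^{p/r}(\Omega)$. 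The other terms of \eqref{equazione 1a} are linear in $\nabla\theta_k$, $\theta_k$ and in the fixed datum $\theta_0$; since $\zeta\,\mt C\nabla\omega\in L^2(\Omega\times(0,T))$ and $\dot\zeta\omega$, $\zeta(0)\omega\in L^\infty(\Omega\times(0,T))$, they pass to the limit by the weak convergences in \eqref{refff}, and collecting everything yields \eqref{cons ener limit}. The boundary condition is immediate: $\theta_k-\theta_B$ lies in the weakly closed subspace $L^2(0,T;H^1_D(\Omega))$, so $\theta-\theta_B\in L^2(0,T;H^1_D(\Omega))$, i.e. $\theta=\theta_B$ on $\partial\Omega_D$ for a.e. $t$. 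Finally, by comparison in \eqref{cons ener limit} the distributional time derivative $\frac{\partial}{\partial t}\bigl(\gamma\rho_0\theta+\theta_T\int_{\R^{3\times3}}\eta_1(\mt A)\,\mathrm d\nu_{\vc x,t}(\mt A)\bigr)$ equals $\Div(\mt C\nabla\theta)\in L^2(0,T;H^{-1}_D)$ since $\theta\in L^2(0,T;H^1)$, whence $\dot\theta+\frac{\partial}{\partial t}\int_{\R^{3\times3}}\eta(\mt A)\,\mathrm d\nu_{\vc x,t}(\mt A)\in L^2(0,T;H^{-1}_D)$.

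The main obstacle will be the passage to the limit in the nonlinear term $\eta_1(\nabla\vc y_k)$: identifying its limit with the barycentre $\int_{\R^{3\times3}}\eta_1(\mt A)\,\mathrm d\nu_{\vc x,t}(\mt A)$ requires genuine weak-$L^1$ convergence paired against $L^\infty$ test functions, not merely that $\nabla\vc y_k$ generate a Young measure, and this rests squarely on the strict growth gap $r<p$ in \eqref{stime crescita}, which supplies the $L^{p/r}$-bound and hence equiintegrability; the identification of the barycentre in \eqref{equazione 1c} relies on the same mechanism. Everything else is a routine diagonal extraction --- in particular, no strong convergence of $\nabla\vc y_k$ is available (the oscillations persist in the limit), which is precisely why $\nu_{\vc x,t}$, and not $\nabla\vc y$ alone, must appear as an unknown in \eqref{cons ener limit}.
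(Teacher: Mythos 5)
Your proposal is correct and follows essentially the same route as the paper: uniform-in-$k$ bounds from \eqref{che bund 3d}, extraction of weak limits and of the Young measures generated by $\nabla\vc y_k$ and $\nabla\vc y_{a,k}$, identification of the barycentre and of the weak limit of $\eta_1(\nabla\vc y_k)$ via the growth gap $r<p$, and passage to the limit only in the energy equation. The one cosmetic difference is that you derive \eqref{supporto phi2} from the Fatou-type lower semicontinuity inequality $\mint\int_{\R^{3\times3}}\phi_2\,\mathrm d\nu_{\vc x,t}\leq\liminf_k\mint\phi_2(\nabla\vc y_k)$ applied to the nonnegative integrand $\phi_2$, whereas the paper deduces convergence in measure of $\nabla\vc y_k$ to $K\cup SO(3)$ and invokes the support statement of the fundamental theorem of Young measures directly; both are standard and equivalent here.
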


\begin{proof}
Thanks to the hypotheses on $\vc y_{a,k}$ we can deduce the existence of a positive constant $c_T$ such that \eqref{che bund 3d} becomes
$$
\|\dot{\vc y}_k\|_{2}^2(t)+\|\theta_k\|^2_{2}(t)+\tint\|\nabla{\theta}_k\|^2_{2}(\tau)\,\mathrm{d}\tau + \|\vc{y}_k\|^p_{W^{1,p}}(t)+\frac1k\|\Delta\vc y_k\|_{2}^2(t)\\
 + k \mint \phi_2(\nabla \vc y_k(\vc x,t))\,\mathrm d\vc x \leq c_T,
$$ 
for a.e. $t\in (0,T)$.
Therefore, %by keeping in mind \eqref{bb 2} 
we can deduce the existence of $(\vc y,\theta)$ satisfying \eqref{funzlimit 0}, and of a converging subsequence satisfying \eqref{refff}\textsubscript{1}--\eqref{refff}\textsubscript{2}. 
\\

Now, for fixed $\eps>0$ we have that the above inequality implies 
$$
\frac{c_TT}k\geq \Tint\int_\Omega \phi_2(\nabla \vc y_k)\,\mathrm{d}\vc x\,\mathrm{d}t
\geq c_\eps \mathscr L^4\bigl\{(\vc x,t)\in\Omega\times(0,T)\colon |\nabla \vc y_k(\vc x,t) - \mt F|\geq \eps,\,\forall \mt F\in K\cup SO(3) \bigr\} ,
%\,\mathrm{d}t,
$$
for some constant $c_\eps>0$ depending on $\eps$ and on the continuous function $\phi_2$. Therefore,
$$
\lim_{k\to\infty}\mathscr L^4\bigl\{(\vc x,t)\in\Omega\times(0,T)\colon |\nabla \vc y_k(\vc x,t) - \mt F|\geq \eps,\,\forall \mt F\in K\cup SO(3) \bigr\}
%\Tint \lim_{k\to\infty}
%\meas\bigl\{\vc x\colon |\nabla \vc y_k(\vc x) - \mt F|\geq \eps,\,\forall \mt F\in K\cup SO(3) \bigr\} \,\mathrm{d}t 
= 0,
$$
which is convergence in measure of $\nabla \vc y_k$ to $K\cup SO(3)$ on $\Omega\times(0,T)$. Thus, the fundamental theorem of Young measures (see e.g., \cite{BallYM}) assures the existence of a non-relabelled subsequence and of a family of parametrized probability measures $\nu_{\vc x,t}\in L_{w^*}^\infty((0,T)\times\Omega;\mathcal M_1(\R^{3\times 3}))$ such that
$$
\supp \nu_{\vc x,t} \subset K\cup SO(3),\quad \text{ for a.e. %$\vc x\in\Omega$, a.e. $t\in(0,T)$ 
$(\vc x,t)\in\Omega\times(0,T)$},
$$
and
$$
\lim_n\Tint\mint h(\nabla\vc y_k(\vc x,t))\zeta(\vc x,t)\,\mathrm{d}\vc x\,\mathrm{d} t=
\Tint\mint \int_{\R^{3\times 3}}h(\mt A)\,\mathrm d\nu_{\vc x,t}(A)\zeta(\vc x,t)\,\mathrm{d}\vc x\,\mathrm{d} t,
$$
for every $h\colon \R^{3\times 3}\to \R$ of growth strictly less than $p$, and every $\zeta\in L^\infty((0,T)\times\Omega).$ Choosing $h(\mt A)=\mt A$, by \eqref{3d conv4} we immediately get that \eqref{equazione 1c} holds.
Furthermore,
\begin{align*}
\eta_1(\nabla \vc y_k) \tow \int_{\R^{3\times3}}\eta_1(\mt A)\,\mathrm{d}\nu_{\vc x,t}(\mt A),\quad\text{ in $L^q(\Omega\times(0,T))$%, a.e. $t\in\times(0,T)$ 
},
\end{align*}
for every $q\in[1,\frac p{r})$. A similar argument entails that $\vc y_{a,k}$ generates $\nu_{\vc x,0}$ and that 
\begin{align*}
\supp \nu_{\vc x,0} \subset K\cup SO(3),\quad \text{ for a.e. %$\vc x\in\Omega$, a.e. $t\in(0,T)$ 
$\vc x\in\Omega$},\\
\eta_1(\nabla \vc y_{a,k}) \tow \int_{\R^{3\times3}}\eta_1(\mt A)\,\mathrm{d}\nu_{\vc x,0}(\mt A),\quad\text{ in $L^q(\Omega)$%, a.e. $t\in\times(0,T)$ 
},
\end{align*}
for every $q\in[1,\frac p{r})$.\\

We can thus pass to the limit in \eqref{equazione 1a} and get \eqref{cons ener limit}. Finally, by comparison, we deduce $\dot\theta+\frac{\partial}{\partial t}\int_{\R^{3\times3}}\eta(\mt A)\nu_{\vc x,t}(\mt A)\in L^2(0,T;H^{-1}_D)$.
\end{proof}

\begin{remark}
\label{equaz 1b limit}
%% aggiungere BC that is \frac{\partial\vc y}{\partial \vc m}= \vc 0,\qquad &\text{on } \partial\Omega_D\times(0,T),\\
%%guarda le x negli integrali
%% aggiungere ipotesi
\normalfont
Under the assumptions of Proposition \ref{prop limit model}, when passing to the limit as $k\to\infty$, we notice that the equation for the conservation of momentum, that is \eqref{equazione 1b}, reduces to
\beq
\label{limite y eq}
\mint \int_{\R^{3\times 3}}\frac{\partial\phi_{2}}{\partial\mt F}(\mt A)\,\mathrm d\nu_{\vc x,t}(\mt A)\,\mathrm d\vc x : \nabla\boldsymbol\psi \,\mathrm d\vc x = 0,\qquad \forall \boldsymbol\psi\in C^1(\Omega;\R^3),\qquad\text{a.e. $t\in (0,T)$}.
\eeq
Indeed, \eqref{equazione 1b} with $\xi\in C^1_c(0,T)$ becomes
\beq
\label{limite y eq 2}
\begin{split}
\Tint\mint& \xi\frac{\partial\phi_{2}}{\partial\mt F}(\nabla\vc y_k)\,\mathrm d\vc x : \nabla\boldsymbol\psi \,\mathrm d\vc x\,\mathrm d t \\ 
&=-\frac 1k \Tint\mint\biggl (\xi\frac{\partial\phi_{1}}{\partial\mt F}(\nabla\vc y_k,\theta): \nabla\boldsymbol\psi -\rho_0\dot{\xi}\dot{\vc y}_k\cdot\boldsymbol\psi+\frac1k\Delta\vc y_k\cdot\Delta\boldsymbol\psi \biggr)\,\mathrm d\vc x\,\mathrm d t.
\end{split}
\eeq
By the Cauchy-Schwarz and the H\"older inequalities, thanks to \eqref{stime crescita} we get
\[
\begin{split}
\Tint\mint\Bigl(&\xi\frac{\partial\phi_{1}}{\partial\mt F}(\nabla\vc y_k,\theta): \nabla\boldsymbol\psi\Bigr)\,\mathrm d\vc x\,\mathrm d t \\
&\leq\hat c_T\bigl(1+\|\nabla\vc y_k\|_{L^\infty(0,T;L^p)
}^p+\|\nabla\vc y_k\|_{L^\infty(0,T;L^p)
}^{\tilde r}\|\theta_k\|_{L^2(0,T;L^6)}
\bigr)\|\xi\|_{C^1}\|\boldsymbol\psi\|_{C^1}\\
&\leq \hat C_T\|\xi\|_{C^1}\|\boldsymbol\psi\|_{C^1},
\end{split}
\]
where $c_T,\hat C_T$ are positive constants independent of $k$, and where we also made use of the bounds on $(\theta_k,\nabla\vc y_k)$ in the proof of Proposition \ref{prop limit model}. On the other hand, 
\begin{align*}
\Tint\mint\rho_0\dot{\xi}\dot{\vc y}_k\cdot\boldsymbol \psi\,\mathrm d\vc x\,\mathrm d t 
\leq c_T\rho_0 \| \dot{\vc y}_k\|_{L^\infty(0,T;L^2)}
 \|\boldsymbol\psi\|_2\|\xi\|_{C^1}
\leq \tilde c_T\|\xi\|_{C^1}\|\boldsymbol\psi\|_{C^1},\\
\Tint\mint\frac1k \xi\Delta\vc y_k\cdot \Delta\boldsymbol \psi\,\mathrm d\vc x\,\mathrm d t \leq \frac{c_T}k\|\Delta\vc y_k\|_{L^\infty(0,T;L^2)}\|\xi\|_{C^1}\|\boldsymbol\psi\|_{H^2}\leq \tilde c_T\|\xi\|_{C^1}\|\boldsymbol\psi\|_{H^2}
\end{align*}
for some positive constant $\tilde c_T$ independent of $k$. A passage to the limit in \eqref{limite y eq 2}, together with \eqref{stime crescita} and the fact that $\nabla\vc y_k$ generates $\nu_{\vc x,t}$ (cf. \eqref{refff}) lead to \eqref{limite y eq}. 
In conclusion, \eqref{cons ener}\textsubscript{2} does not determine the evolution for $\nu_{\vc x,t}$ after taking the limit $k\to\infty$. Actually, given that $\phi_2$ is smooth, and hence $\frac{\partial\phi_2}{\partial\mt F}(\mt F)= \mt 0$ for each $\mt F\in SO(3)\cup K$, \eqref{limite y eq} does not even add any further information to \eqref{supporto phi2}. 
\end{remark}
\begin{remark}
\label{tempus reg}
\normalfont
After taking the limit $k\to\infty$, the only information on the time evolution of $\vc y$ is embedded in $\frac{\partial}{\partial t}\int_{\R^{3\times3}}\eta_1(\mt A)\,\mathrm{d}\nu_{\vc x,t},$ and in $\vc y \in W^{1,\infty}(0,T;L^{2}(\Omega))\cap L^{\infty}(0,T;W^{1,\infty}(\Omega))$. Indeed, we point out that, by \eqref{supporto phi2}, $\int_{\R^{3\times3}}\eta_1(\mt A)\,\mathrm{d}\nu_{\vc x,t}\in L^\infty(\Omega\times(0,T))$. Thus, $$\int_{\R^{3\times3}}\eta_1(\mt A)\,\mathrm{d}\nu_{\vc x,t}+{\theta}\in H^1(0,T;H^{-1}_D)\cap L^\infty((0,T);L^2(\Omega)).$$ 
By \cite[Lemma II.5.9]{Boyer} this also belongs to $C(0,T;L^2_w(\Omega))$, from which we can make sense of the initial conditions.
%%aggiubgere ipotesi di dato iniziale finito sotto!!
Furthermore, we also have that \cite[Lemma II.5.9]{Boyer} together with \eqref{funzlimit 0} imply that $\vc y\in C(0,T;W^{1,\infty}_w(\Omega))$. 
\end{remark}
\begin{remark}
\rm
The gradient Young measure generated by $\nabla\vc y_k$ as $k\to\infty$ is in general non-trivial, that is not of the form $\delta_{\nabla\vc y(\vc x,t)}$ for a.e. $(\vc x,t)\in\Omega\times(0,T)$. Indeed, we know from the static theory (see Section \ref{nonlonear elastic}) that martensitic microstructures may arise in order to achieve compatibility between phases, and hence, as $k\to\infty$ faster and faster oscillations may occur in $\nabla\vc y_k$, generating non-trivial gradient Young measures.
\end{remark}

\begin{remark}
\label{Remark inter}
\rm
If we assume as in \cite{FDP2} or in Section \ref{PDE theory} below that there exist $\Omega_A(t),\Omega_M(t)\subset\Omega$ open such that
\beq
\label{fivestar}
\Omega_A(t)\cap\Omega_M(t)=\varnothing,
\qquad \mathscr{L}^3 \bigl( \Omega\setminus(\Omega_A(t)\cup\Omega_M(t))\bigr)=0, \quad\text{a.e. $t\in(0,T)$},
\eeq
and
%{\color{red}
\begin{align*}
\nu_{\vc x,t}(SO(3))= 1,\qquad\text{a.e. $\vc x\in\Omega_A(t)$, a.e. $t\in(0,T)$}, \\
\nu_{\vc x,t}(K)= 1,\qquad\text{a.e. $\vc x\in\Omega_M(t)$, a.e. $t\in(0,T)$}.
\end{align*} then $$\int_{\R^{3\times3}}\eta_1(\mt A)\,\mathrm{d}\nu_{\vc x,t}=-\alpha\chi_{\Omega_m},$$ where
by $\chi_{\Omega_m}$ we denoted the indicator function on $\Omega_M$. The formula for differentiation of integrals on time dependent domains implies
\beq
\label{derivata materiale}
\langle \dot\chi_{\Omega_M}(\nabla \vc y),\psi\rangle = \dert \int_{\Omega_M} \psi \,\mathrm d\vc x = \int_{\Gamma(t)} (\vc v \cdot \vc n) \psi\,\mathrm d\mathscr H^2,\qquad\forall \psi \in C^\infty_0(\Omega),
\eeq
provided $\Omega_A(t),\Omega_M(t)$ %is $2$-rectifiable 
and $\vc v\cdot \vc n$ are smooth enough (see e.g., \cite{Flanders}). %is in $L^1(\Gamma(t))$ for a.e. $t$.
Here $\Gamma(t):=\Omega\setminus(\Omega_A\cup\Omega_M)(t)$ is a surface separating $\Omega_A(t)$ from $\Omega_M(t)$, $\vc n$ denotes the outer normal to $\Omega_M$ and $\vc v(\vc s)$ is the velocity of the interface at the point $\vc s\in \Gamma(t)$ at time $t$. By $\langle \cdot,\cdot\rangle$ we denoted the duality pairing between a distribution and a test function. A version of \eqref{derivata materiale} under the moving mask approximation can be found in \cite[Corollary 4.4]{FDP2}. It thus appears clear from \eqref{derivata materiale} that the model needs to be closed with some relation between $\vc v$ and $\nabla \vc y$, $\theta_B$, and possibly $\theta$, as much as it requires an initial condition for $\Omega_M(t)$. %Independently of the constitutive relation chosen, in the next section we show that the moving mask approximation gives a further strong constraint on the possible $\nabla\vc y$. 
\end{remark}

\section{Connections with the moving mask assumption}
\label{connection}
In this section we draw connections between the results of Section \ref{inelali}, and the moving mask assumptions introduced in \cite{FDP2}. 
Let us start by recalling the \textit{moving mask} assumptions:
\begin{enumerate}[label=(MM\arabic*)]
\setlength\itemsep{0.1em}
\item\label{MM1} the phases are separated, that is at almost every point we have either austenite or martensite but not both, so that phase interfaces between austenite and martensite are sharp (cf. \eqref{fivestar});
\item\label{MM2} during the phase transition, the deformation gradient remains equal to the identity in the austenite region. This is the case, for example, when the austenite region is connected;
\item\label{MM4} microstructures do not change after the transformation has happened;
\item\label{MM3} the phase interface moves continuously. More precisely, for almost every point $\vc x$ in the domain, there exists a time when $\vc x$ is contained in the phase interface. %This is a reasonable assumption especially in thermally induced transformations.  
\end{enumerate}
As proved in Section \ref{inelali}, in the limit as $k$ tends to $\infty$, the system \eqref{cons ener} reduces to finding a function $\theta$ and a time-dependant gradient Young measure $\nu_{\vc x,t}$ satisfying 
\beq
\label{supp strong}
\supp \nu_{\vc x,t} \subset K\cup SO(3),\qquad\text{a.e. $(\vc x,t) \in \Omega\times(0,T)$},
\eeq
and the weak formulation in \eqref{cons ener limit} of
\beq
\label{heat eq strong}
\gamma\rho_0\theta_t - \Div(\mt C\nabla\theta) = -\theta_T\frac{\partial}{\partial t}\int_{\R^{3\times3}}\eta_1(\mt A)\,\mathrm{d}\nu_{\vc x,t}(\mt A).
\eeq
On the one hand, it is clear that, given an arbitrary parametrised family of measures $\nu_{\vc x,t}\in L^\infty(\Omega\times(0,T);\mathcal{M}_1(\R^{3\times3}))$ satisfying \eqref{supporto phi2}, \eqref{equazione 1c}, $\frac{\partial}{\partial t}\int_{\R^{3\times3}}\eta_1(\mt A)\,\mathrm{d}\nu_{\vc x,t}(\mt A)\in L^2(0,T;H_D^{-1})$ and \ref{MM1}--\ref{MM3}, then one can find the related temperature field $\theta(\vc x,t)\in L^2(0,T;H^1_D)$ by solving the weak form of the heat equation \eqref{heat eq strong}. This means that sufficiently regular moving mask solutions are solutions to \eqref{supp strong}--\eqref{heat eq strong}.\\
On the other hand, as pointed out in Remark \ref{Remark inter}, the system \eqref{supp strong}--\eqref{heat eq strong} is underdetermined, and should be closed with some constitutive relation between the velocity of the phase interface and the temperature, $\nu_{\vc x,t}$, and possibly other quantities. This is done, for example, in Section \ref{PDE theory} in a one-dimensional context. 
However, we conjecture that \ref{MM1}--\ref{MM3} can be deduced to hold in the limit as $k\to\infty$ for $(\theta_k,\vc y_k)$, weak solutions to \eqref{Pb bc}--\eqref{cons ener} provided by Theorem \ref{esiste 3d}.  \\

In order to justify \ref{MM1} one should prove that ,
$$
\mathscr L^4\bigl(\bigl\{(\vc x,t)\colon \nu_{\vc x,t}(K) \in \{0,1\}	\bigr\}\bigr) =0,
$$
where $\nu_{\vc x,t}$ is as in Proposition \ref{prop limit model}. We recall that (see e.g., \cite{BallYM,Muller,Pedregal})
$$
\nu_{\vc x,t}(\mathcal B) = \lim_{\delta\to 0}\lim_{k\to\infty}\frac{\mathscr L^4\bigl(\bigl\{ (\vc z,s) \in B((\vc x,t);\delta)\colon \nabla \vc y_k(z,s)\in \mathcal B \bigr\}\bigr)}
{\mathscr L^4\bigl(B((\vc x,t);\delta)\bigr)},
$$
for every $\mathcal B\subset \R^{3\times 3}$ Borel, and where $B((\vc x,t);\delta)$ is the open ball of radius $\delta$ centred at $(\vc x,t)$. Here, $\nabla\vc y_k$ is the sequence generating $\nu_{\vc x,t}$ as in \eqref{refff}. Therefore, to rigorously prove \ref{MM1}, one could prove that for almost every $(\vc x,t)\in \Omega\times(0,T),$ and for every $\eps>0$ there exist $\delta_0>0$ such that for every $\delta\in(0,\delta_0)$
$$
\lim_{k\to\infty}\frac{\mathscr L^4\bigl(\bigl\{ (\vc z,s) \in B((\vc x,t);\delta)\colon \nabla \vc y_k(z,s)\in \mathcal B \bigr\}\bigr)}
{\mathscr L^4\bigl(B((\vc x,t);\delta)\bigr)} \in (0,\eps)\cup(1-\eps,1+\eps).
$$ 
This result should be easier to prove in the case of materials satisfying the cofactor conditions where martensitic laminates can form exact austenite-martensite interfaces (see \cite{JamesHyst,FDP3}). Furthermore, one should restrict to initial data $\vc y_{a,k}$ generating a gradient Young measure $\nu_{\vc x,0}$ such that $\nu_{\vc x,0}(K)\in\{0,1\}$ for a.e. $\vc x\in \Omega$.\\
Regarding \ref{MM2}, it is a consequence of a well known result by Reshetnyak (see \cite{Res}, but also \cite{BallJames2,Muller}) that a parametrised measure $\mu_{\vc x}$ with $\supp \mu_{\vc x} \subset SO(3)$ for a.e. $\vc x$ in an open connected subset $\mathcal{A}\subset\Omega$ must be of the form $\mu_{\vc x}=\delta_{\mt R}$ for a.e. $\vc x\in \mathcal{A},$ some $\mt R\in SO(3)$, and where $\delta_{\mt R}$ denotes a Dirac delta at $\mt R.$ So the problem reduces to show that the set $\{\vc x\colon \nu_{\vc x,t}(SO(3))=1\}$ is connected, or, more in general, indecomposable (see \cite{MullerDolzman,Muller}).\\
A context where it might be easier to show that \ref{MM1}--\ref{MM2} hold is when $|\det(\mt F)-1|>\delta$ for every $\mt F\in K$, and for some $\delta>0$. Indeed, in this case, we expect for $k$ large enough to just have nucleations at the corners of the sample (see \cite{BallKoumatosQC,BattSme}), and the presence of martensitic islands in austenite (or viceversa) to be energetically too expensive.\\
%Assuming \ref{MM1}, a context where might be easier to show that, for almost every $t\in(0,T)$, the set $\{\vc x\colon \nu_{\vc x,t}(SO(3))=1\}$ is connected, is when $|\det(\mt F)-1|>\delta$ for every $\mt F\in K$, and for some $\delta>0$. In this case, we expect for $k$ large enough to have just nucleations at the corners of the sample (see \cite{BallKoumatosQC,BattSme}), and it should therefore be easier to prove that $\{\vc x\colon \nu_{\vc x,t}(SO(3))=1\}$ is connected, at least for a.e. $t\in[0,t^*]$ for some $t^*>0$.\\
Proving \ref{MM4} rigorously from the solutions to \eqref{Pb bc}--\eqref{cons ener} in the limit as $k$ tends to infinity might be very difficult in general. Assuming for simplicity \ref{MM1}, we now give an intuitive explaination of why \ref{MM4} should hold based on rigidity, which is well known to play an important role in vectorial problems. As the map $\vc y\in W^{1,\infty}(\Omega;\R^3)$, with $\nabla\vc y$ satisfying \eqref{equazione 1c}, is Lipschitz continous, its gradient cannot be arbitrary (see e.g., \cite{BallJames1,BallJames2,Muller}). {Therefore, changing the macroscopic deformation gradient $\nabla\vc y$ in a subset of $\{\vc x\colon \nu_{\vc x,t}(K)=1\}$ of positive measure, might require changing $\nabla\vc y$ in a much larger set, leading to discontinuities in $t$ of $\vc y$ on a subset $\Omega$ of positive measure. But such a jump in $t$ would contradict the fact that, by Proposition \ref{prop limit model}, $\vc y\in W^{1,\infty}(0,T;L^2(\Omega;\R^3)).$\\
} 
%
%
%. Therefore, in general, this generates at a certain time instant $t$ a jump in the values of $\vc y$ on a subset of $\Omega$ of positive measure. But the jump at $t$ in the values of $\vc y$ on a positive measure set would contradict the fact that, by Proposition \ref{prop limit model}, $\vc y\in W^{1,\infty}(0,T;L^2(\Omega;\R^3)).$\\
The continuous movement of the phase interface, that is assumption \ref{MM3}, is partially a consequence of the observations in Remark \ref{tempus reg}. Indeed, let us assume that \ref{MM1} and \ref{MM4} hold, and, for simplicity, that 
$$
\min_{\mt F\in K^{qc}}\min_{\mt R\in SO(3)} |\mt F-\mt R| \geq \eps,
$$
for some $\eps>0.$ Here we are taking $SO(3)$ and not $SO(3)^{qc}$ as these two sets coincide (see e.g., \cite{BallJames2}). In this case, suppose \ref{MM3} is not satisfied. Then there exist $t_0\in(0,T)$, and a measurable set $\mathcal{A}$, with $\meas (\mathcal{A})>0$, such that the macroscopic deformation gradient $\nabla \vc y$ satisfies $\nabla \vc y(\vc x,s)\in SO(3)$ for a.e. $(\vc x,s)\in\Omega\times(0,t_0)$, and $\nabla \vc y(\vc x,s)\in K$ for a.e. $(\vc x,s)\in\Omega\times[t_0,t_0+\delta)$, for some $\delta>0$. Given the fact that we are assuming that \ref{MM4} also holds, there exist $\boldsymbol\psi\in W^{1,\infty}(\Omega;\R^3)$, independent of time, such that $\nabla\boldsymbol\psi = \nabla\vc y(\vc x,t)-\nabla\vc y(\vc x,s)$, for every $s,t$ satisfying $s<t_0<t<t_0+\delta$. In this case, 
$$
\mint (\nabla\vc y(\vc x,t)-\nabla\vc y(\vc x,s)):\nabla\psi \,\mathrm{d}\vc x = \mint |\nabla\vc y(\vc x,t)-\nabla\vc y(\vc x,s)|^2\,\mathrm{d}\vc x \geq \eps^2\meas(\mathcal{A})>0, 
$$ 
for every $s<t$, thus contradicting the fact that, by Remark \ref{tempus reg}, $\vc y\in C([0,T];W^{1,\infty}_{w^*}(\Omega;\R^3))$, which implies
$$
\mint (\nabla\vc y(\vc x,t)-\nabla\vc y(\vc x,s)):\nabla\psi \,\mathrm{d}\vc x\to 0, \qquad\text{as $s\to t$.}
$$

%%%%%%%%%%%%%%%%%%%%%%%%%%%%%%%%%%%%%%%%%%%%%%%%%
\section{The evolution of a simple laminate in a one-dimensional context}
\label{PDE theory}
%%%%%%%%%%%%%%%%%%%%%%%%%%%%%%%%%%%%%%%%%%%%%%%%%%%%%%
{The aim of this section is to introduce some hypotheses that allow us to construct solutions to the limit problem \eqref{supporto phi2}--\eqref{cons ener limit} by solving a one-dimensional heat equation with a measure valued source depending nonlinearly on the unknown. This models the evolution of the phase boundary between a laminate of martensite and austenite and is a simplified version of the model in \cite{AberKnowles2,AberKim}. The resulting solutions are an example of solutions to \eqref{supp strong}--\eqref{heat eq strong} satisfying \ref{MM1}-\ref{MM3}.
}
\subsection{An example: the simple laminate}
\label{An ex easy}
Let us suppose that an austenite to martensite phase transition takes place in a parallelepiped with a circular base, that is, in cylindrical coordinates, $\Omega_C:=(0,2\pi)\times(0,r)\times(0,L)$ for some $0<r\ll L$. Suppose further that there exists a single phase interface $\Gamma(t)=\{\vc x\in\Omega_C\colon \vc x\cdot\vc e_3=u(t)\}$, $u(t)\in(0,L)$ perpendicular to $\vc e_3$ for every $t$, and that
\begin{align*}
\nu_{\vc x,t} = \lambda \delta_{\mt A}+(1-\lambda)\delta_{\mt B},\qquad\text{a.e. in $\Omega_M(t)$, a.e. $t>0$},\\
\nu_{\vc x,t} = \delta_{\mt 1},\qquad\text{a.e. in $\Omega_A(t)$, a.e. $t>0$},
\end{align*} 
with $\mt A,\mt B\in K$, $\lambda \in [0,1]$ fixed, and 
$$\Omega_M(t) = \{\vc x\in\Omega_C\colon \vc x\cdot\vc e_3<u(t)\},\qquad \Omega_A(t) = \{\vc x\in\Omega_C\colon \vc x\cdot\vc e_3>u(t)\}.$$
Here and below $\delta_{\mt F}$ is a Dirac measure at $\mt F$. 
By \eqref{equazione 1c}, the macroscopic deformation gradient, that is an average of the fine microstructures, is given by the barycentre of the Young measure, that is, in our case
\begin{align*}
\nabla \vc y = \int_{\R^{3\times3}}\mt F\,\mathrm{d}\nu_{\vc x,t}(\mt F) = \lambda \mt A+(1-\lambda)\mt B,\qquad\text{a.e. in $\Omega_M(t)$, a.e. $t>0$},\\
\nabla \vc y = \int_{\R^{3\times3}}\mt F\,\mathrm{d}\nu_{\vc x,t}(\mt F) = \mt 1,\qquad\text{a.e. in $\Omega_A(t)$, a.e. $t>0$}.
\end{align*}
However, as shown in \cite{BallJames1}, $\nabla\vc y$ is a gradient of a continuous deformation map $\vc y\colon \Omega_C\to \R^3$ if and only if there exist $\vc a\in\R^3$ %,\vc n\in \mathbb{S}^2$ 
such that
$$
\lambda \mt A+(1-\lambda)\mt B  = \mt 1 +\vc a\otimes \vc e_3.%n.
$$
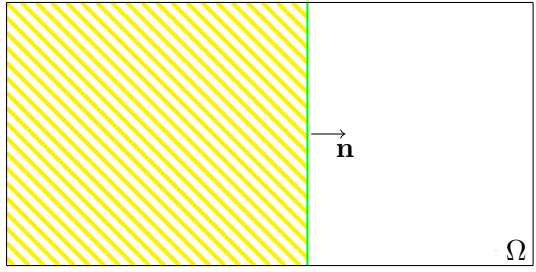
\begin{figure}
\centering
\begin{tikzpicture}
\draw[yellow, ultra thick] (0,0) -- (-3.5,3.5);
\draw[yellow, ultra thick] (-0.2,0) -- (-3.5,3.3);
\draw[yellow, ultra thick] (-0.4,0) -- (-3.5,3.1);
\draw[yellow, ultra thick] (-0.6,0) -- (-3.5,2.9);
\draw[yellow, ultra thick] (-0.8,0) -- (-3.5,2.7);
\draw[yellow, ultra thick] (-1.0,0) -- (-3.5,2.5);
\draw[yellow, ultra thick] (-1.2,0) -- (-3.5,2.3);
\draw[yellow, ultra thick] (-1.4,0) -- (-3.5,2.1);
\draw[yellow, ultra thick] (-1.6,0) -- (-3.5,1.9);
\draw[yellow, ultra thick] (-1.8,0) -- (-3.5,1.7);
\draw[yellow, ultra thick] (-2,0) --	(-3.5,1.5);
\draw[yellow, ultra thick] (-2.2,0) -- (-3.5,1.3);
\draw[yellow, ultra thick] (-2.4,0) -- (-3.5,1.1);
\draw[yellow, ultra thick] (-2.6,0) -- (-3.5,0.9);
\draw[yellow, ultra thick] (-2.8,0) -- (-3.5,0.7);
\draw[yellow, ultra thick] (-3,0) -- (-3.5,0.5);
\draw[yellow, ultra thick] (-3.2,0) -- (-3.5,0.3);
\draw[yellow, ultra thick] (-3.4,0) -- (-3.5,0.1);
\draw[yellow, ultra thick] (0.2,0) -- (-3.3,3.5);
\draw[yellow, ultra thick] (0.4,0) -- (-3.1,3.5);
\draw[yellow, ultra thick] (0.5,.1) -- (-2.9,3.5);
\draw[yellow, ultra thick] (0.5,.3) -- (-2.7,3.5);
\draw[yellow, ultra thick] (0.5,.5) -- (-2.5,3.5);
\draw[yellow, ultra thick] (0.5,.7) -- (-2.3,3.5);
\draw[yellow, ultra thick] (0.5,.9) -- (-2.1,3.5);
\draw[yellow, ultra thick] (0.5,1.1) -- (-1.9,3.5);
\draw[yellow, ultra thick] (0.5,1.3) -- (-1.7,3.5);
\draw[yellow, ultra thick] (0.5,1.5) -- (-1.5,3.5);
\draw[yellow, ultra thick] (0.5,1.7) -- (-1.3,3.5);
\draw[yellow, ultra thick] (0.5,1.9) -- (-1.1,3.5);
\draw[yellow, ultra thick] (0.5,2.1) -- (-0.9,3.5);
\draw[yellow, ultra thick] (0.5,2.3) -- (-0.7,3.5);
\draw[yellow, ultra thick] (0.5,2.5) -- (-0.5,3.5);
\draw[yellow, ultra thick] (0.5,2.7) -- (-0.3,3.5);
\draw[yellow, ultra thick] (0.5,2.9) -- (-0.1,3.5);
\draw[yellow, ultra thick] (0.5,3.1) -- (0.1,3.5);
\draw[yellow, ultra thick] (0.5,3.3) -- (0.3,3.5);
\draw[yellow, ultra thick] (0.5,3.5) -- (0.5,3.5);
\draw[green, thick] (0.5,0) -- (0.5,3.5);
\draw[thin] (-3.5,3.5) -- (3.5,3.5);
\draw[thin] (3.5,3.5) -- (3.5,0);
\draw[thin] (3.5,0) -- (-3.5,0);
\draw[thin] (-3.5,0) -- (-3.5,3.5);
\draw [->] (0.55,1.75) -- (1,1.75); 
\filldraw [red] (1,1.75) circle (0pt) node[anchor=north,black] {$\vc n $};

\filldraw [red] (3.,0.2) circle (0pt) node[anchor=west,black] {$\Omega$};

\end{tikzpicture}
\caption{Picture of a section of $\Omega_C$, on the left $\Omega_M(t)$ where $\nabla \vc y = \mt 1 +\vc a\otimes \vc n$, on the right $\Omega_A$, where $\nabla \vc y=\mt 1$ .}
\end{figure}
We can hence deduce that 
$$
\vc y=
\begin{cases}
\vc x+\vc a(\vc x\cdot \vc e_3)+\vc c_1(t),\qquad&\text{in $\Omega_M(t)$},\\
\vc x+\vc c_2(t),\qquad&\text{in $\Omega_A(t)$},
\end{cases}
$$
for some time dependent vectors $\vc c_1,\vc c_2\colon [0,\infty)\to\R^3$ such that $\vc c_1(t)-\vc c_2(t) = \vc a u(t)$. In an equivalent way, knowing $\vc c_1,\vc c_2$ we can determine the position of the phase boundary given by those $\vc x$ satisfying $(\vc x\cdot \vc e_3) = \frac{|\vc c_2(t)-\vc c_1(t)|}{|\vc a|}$. %\\%, such that $|\vc n|=1$ and $\vc n$ is normal to the austenite to martensite interface $\Gamma$.\\
For simplicity we choose constant Dirichlet boundary conditions $\theta_B\neq\theta_T$ for $x_3=0,L$, and $\frac{\partial\theta}{\partial\vc m}=0$ on the other face, having normal $\vc m$. In this case, the equation governing the conservation of energy simplifies to a one-dimensional equation. We define $s\in(0,L)$ to be the coordinate in direction $\vc e_3$, we assume the phase interface to be located at $s=u_0$ when $t=0$, and assume without loss of generality that $\Omega_M(0)=\{s\colon s<u_0\}$. In this case, the classical formulation of \eqref{cons ener limit} reduces to 
\beq
\label{pb 1d simple}
\begin{cases}
\rho_0\gamma\dot\theta- K\theta_{ss}=\alpha
\dot\chi_{\Omega_M}(t),\qquad &\text{in $\Omega\times(0,T)$},\\
\chi_{\Omega_M}(s,t) =  \chi_{\{s<u(t)\}},\qquad &\text{in $(0,T)$},\\
\theta(0,t)=\theta(L,t)=\theta_B,\qquad &\text{in $(0,T)$},\\
\theta\bigl|_{t=0}=\theta_0,\qquad &\text{in $(0,L)$},\\
u(0) = u_0,
\end{cases}
\eeq
where we denoted by $\chi_{\mathcal B}$ the indicator function on the Borel set $\mathcal{B}$. Without loss of generality, below we assume $\alpha=\gamma=\rho_0=1$ to simplify notation. This system of equations describes the evolution of a sharp phase interface at $u(t)$. However, this is underdetermined, and needs to be closed with a constitutive relation for the interface speed $v=\dert \frac{|\vc c_2(t)-\vc c_1(t)|}{|\vc a|}=\dot{u}$ depending on the temperature at $u(t)$ and possibly other variables (see also Remark \ref{Remark inter}).
In order to respect the physics of the problem we want the martensite domain to expand when $\theta<\theta_T$ and to shrink when $\theta>\theta_T$. Mathematically, this means that we assume
\beq
\label{cond v}
\begin{cases}
v(\theta) >0,\qquad\text{if } \theta<\theta_T,\\
%=0,\qquad\text{if } \bt=\theta_c,\\
v(\theta) <0,\qquad\text{if } \theta>\theta_T.
\end{cases}
\eeq
Imposing 
\beq\label{chiusura}\dot u(t)=v(\theta(u(t),t))\eeq 
makes the problem a two-phase Stefan problem with a kinetic condition
at the free boundary (see e.g. \cite{Vis,VisLib,Xien}). The one-dimensional two-phase Stefan problem is usually closed by imposing that the temperature $\theta$ is equal to $\theta_T$ at the phase interface located in $s=u(t)$. However, in martensitic transformations nucleation usually happens strictly below the critical temperature due to hysteresis (see e.g., \cite{JamesMuller}). Given the high thermal conductivity of metals, the high-temperature phase, that is austenite, as much as the phase interface, can be at a temperature strictly lower than the critical-one. This is usually called undercooling in the literature of the Stefan problem (see e.g. \cite{VisLib}). Therefore, the condition $\theta=\theta_T$ at $s=u(t)$ is inaccurate, and is better replaced by \eqref{chiusura} (see also \cite{AberKnowles2,AberKim}). {As shown in \cite{Xien}, the classical two-phase Stefan problem can be recovered from the two-phase Stefan problem with a kinetic condition at the free boundary by assuming that $v$ is linear, that is $\dot u(t) = \frac1\eps(\theta_T-\theta(u(t),t))$, and passing to the limit as $\eps\to 0.$ This would correspond to a physical situation with no hysteresis and no undercooling.
}
Existence of a weak solution to system \eqref{pb 1d simple} has been studied in \cite{Vis}, while existence and uniqueness of suitably defined classical solutions is achieved in \cite{Xien}, under the assumption that $\vc v$ is linear. The aim of the next section is to show that, if $\min\{0,\theta_B\}\leq\theta_0(s)\leq \max\{0,\theta_B\}$ for a.e. $s\in(0,L)$, then the position of the phase interface $u$ is a monotone function of $t$, and reaches the domain boundary in a finite amount of time.

% Existence of a weak solution (in the sense of Definition \ref{Def ws} below) is provided here by Theorem \ref{esistenza 1d} below under suitable assumptions.

%Hence, assigned a certain constitutive relation for the velocity, we have the existence of $\vc v\cdot \vc n$ and we know that the austenite to martensite interfaces are given by the equations $\vc x\cdot\vc  n = \tint v(\te(u(\tau),\tau))\,\mathrm d \tau$. Besides $\vc c_2(t)-\vc c_1(t)=\vc a \tint v(\te(u(\tau),\tau))\,\mathrm d \tau$.\\
%Conversely, if we know that $\Gamma(t)$ is given by the equation $\vc x\cdot \vc n=h(t)$ for some absolutely continuous $h$, then we also know $\vc c_2(t)-\vc c_1(t)=\vc a h(t)$ and $\vc v\cdot \vc n=\dot h(t)$. Furthermore, existence of a weak solution to problem \eqref{pb 1d simple} with $v=\dot h(t)$ is a simple corollary of Theorem \ref{esistenza 1d}.

\subsection{Behaviour of solutions to the 1-dimensional problem}
For simplicity, we define a rescaled temperature $\bar\theta=\theta -\theta_B$, the rescaled critical temperature $\theta_c:=\theta_T-\theta_B$, and below we consider the following definition of weak solution
\begin{definition}
\label{Def ws}
Let $T>0$, and $\Omega:=(0,L)$. We say that $(\bar\theta,u)$ is a weak solution to problem \eqref{pb 1d simple} if
\begin{align*}
&u \in H^1(0,T),\\
&\bar\theta \in L^2(0,T; H^1_0(\Omega))\cap H^1(0,T;H^{-1}(\Omega)),\\
&\bt + \theta_B > 0,\qquad\text{ a.e. }x\in\Omega,\, t\in (0,T),%\\
%&\chi_{\Omega_M} \in\{0,1\},\qquad\text{ a.e. }x\in\Omega,\, t\in (0,T),
\end{align*}
and 
\begin{align}
\label{heat eq}
\langle \dot\bt,\psi\rangle + \mint K\bt_s\psi_s\,\mathrm{d}x = \langle \dot u \delta_{u} ,\psi\rangle ,\qquad  &\text{a.e. }t\in(0,T),\,\forall \psi \in H^1_{0}(\Omega),\\
\label{usimuove}
u(t)=u(0)+\tint \chi_{\{u(\tau)\in\Omega\}} v(\bt(u(\tau),\tau))\,\mathrm{d}\tau, \qquad&\forall t\in(0,T).%,\\
%\chi_{\Omega_M}(s,t)=\chi_{\{s<u(t)\}}, \qquad&\forall t\in(0,T).
\end{align}
Furthermore, $\bar\theta(0,x)=\bart_0:=\theta_0-\theta_B$ %and $\chi_{\Omega_M}|_{t=0}=\chi_{M,0}$ 
almost everywhere in $\Omega$.
\end{definition}
\begin{remark}
\rm{
{
%Under the above assumptions, 
%\beq
%\label{la velocita}
%\dot\chi_{\Omega_M}=\dot u(t) \delta_{u(t)} = v(\bart(u(t),t))\delta_{u(t)}, \qquad{\text{a.e. } t\in(0,T)},
%\eeq
%where $\delta_s$ represents the Dirac delta at $s$. 
For the sake of coherence, we multiplied by $\chi_{\{t\colon u(t)\in(0,L)\}}$ the velocity $v$ of the interface, in order to avoid the non-physical situation where the interface moves out of the sample. This does not affect the $H^1(0,T)$ regularity of $u$.
%
%
%so that the phase boundary does not move when it leaves the domain. %Most of the results below are not affected by the additional term $\chi_{u(t)\in(0,1)}$, and therefore, for simplicity of notation, we will neglect it, unless explicitly stated.
}
}
\end{remark}
The following result concerning weak solutions holds 
\begin{theorem}
\label{esistenza 1d}
Let $v$ be Lipschitz, $\bart_0\in L^2(\Omega)$ and $u(0)\in\Omega$. Then there exists $(\bart,u)$ weak solution in the sense of Definition \ref{Def ws} to \eqref{pb 1d simple}.
\end{theorem}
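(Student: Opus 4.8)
The plan is to solve the coupled system \eqref{heat eq}--\eqref{usimuove} by a Schauder fixed-point argument whose unknown is the free-boundary position $u$, the temperature being reconstructed from $u$ by solving a linear parabolic problem. What makes this possible in one space dimension is the embedding $H^1_0(\Omega)\hookrightarrow C(\overline\Omega)$: the point evaluation $\delta_x$ is then a bounded functional on $H^1_0(\Omega)$, uniformly for $x\in\overline\Omega$, so that for $u\in H^1(0,T)$ the forcing $\dot u\,\delta_{u(\cdot)}$ lies in $L^2(0,T;H^{-1}(\Omega))$ (it is, in fact, the distributional time-derivative of $\chi_{\{x<u(\cdot)\}}$); dually, for $\bt\in L^2(0,T;H^1_0(\Omega))$ the trace $t\mapsto\bt(u(t),t)$ is a measurable function with $|\bt(u(t),t)|\le c\,\|\bt(\cdot,t)\|_{H^1_0}\in L^2(0,T)$, so that the closure relation \eqref{chiusura} is meaningful.

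I would first fix $u\in H^1(0,T)$ with $u(0)=u_0$ and let $\bt=:S(u)$ be the unique weak solution, obtained by the Galerkin method, in $L^2(0,T;H^1_0(\Omega))\cap H^1(0,T;H^{-1}(\Omega))\hookrightarrow C([0,T];L^2(\Omega))$ of $\dot\bt-K\bt_{ss}=\dot u\,\delta_{u(\cdot)}$ with $\bt|_{\partial\Omega}=0$ and $\bt(\cdot,0)=\bart_0$; testing with $\bt$ and using Poincaré and $\sup_x\|\delta_x\|_{H^{-1}}<\infty$ gives $\|\bt\|_{L^2(0,T;H^1_0)}+\|\dot\bt\|_{L^2(0,T;H^{-1})}\le c(\|\bart_0\|_2+\|\dot u\|_{L^2(0,T)})$. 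Then I would set
\[
\Phi(u)(t):=\min\Bigl\{L,\ \max\Bigl\{0,\ u_0+\int_0^t\chi_{\{u(\tau)\in\Omega\}}\,v\bigl(S(u)(u(\tau),\tau)\bigr)\,\mathrm d\tau\Bigr\}\Bigr\},
\]
the outer projection onto $[0,L]$ being there only to keep the interface inside the sample and being inactive on genuine solutions, so that a fixed point of $\Phi$ is a weak solution in the sense of Definition \ref{Def ws}. Since $v$ is Lipschitz, $\Phi(u)\in H^1(0,T)$, $\Phi(u)(0)=u_0$, and $\Phi(u)$ is $[0,L]$-valued; and, using the Dirichlet heat-kernel representation of $S(u)$ together with the Lipschitz bound on $v$, one controls $\dot u=v(\bt(u(\cdot),\cdot))$ uniformly on a short interval $[0,t_0]$ whose length does not depend on the particular $u$, so that $\Phi$ maps a suitable closed convex $\mathcal K\subset H^1(0,t_0)$ into itself; $\mathcal K$ is compact in $C([0,t_0])$, and iterating in time extends the construction to $[0,T]$.

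It remains to check that $\Phi|_{\mathcal K}$ is continuous (compactness being clear, since $\Phi(\mathcal K)$ is bounded in $H^1(0,t_0)$, hence precompact in $C([0,t_0])$). If $u_n\to u$ in $C([0,t_0])$ with $u_n\in\mathcal K$, then $u_n\tow u$ in $H^1(0,t_0)$, the forcings $\dot u_n\,\delta_{u_n(\cdot)}$ converge weakly in $L^2(0,t_0;H^{-1}(\Omega))$ (using that $x\mapsto\delta_x$ is Hölder continuous into $H^{-1}(\Omega)$), and $\bt_n:=S(u_n)$, bounded in $L^2(0,t_0;H^1_0)\cap H^1(0,t_0;H^{-1})$, converges — by the Aubin--Lions lemma applied to the chain $H^1_0(\Omega)\hookrightarrow\hookrightarrow C(\overline\Omega)\hookrightarrow H^{-1}(\Omega)$ — strongly in $L^2(0,t_0;C(\overline\Omega))$ to $S(u)$ (the limit equation identifying it by uniqueness). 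Strong convergence in $L^2(0,t_0;C(\overline\Omega))$ and $u_n\to u$ uniformly give, along a subsequence, $\bt_n(u_n(t),t)\to S(u)(u(t),t)$ for a.e.\ $t$ with an $L^2(0,t_0)$ majorant; the Lipschitz continuity of $v$ and dominated convergence then give $\Phi(u_n)\to\Phi(u)$ in $C([0,t_0])$, and subsequence-independence of the limit yields continuity. Schauder's fixed-point theorem then produces $u=\Phi(u)\in\mathcal K$, and $(\bt,u):=(S(u),u)$ has the regularity of Definition \ref{Def ws} and satisfies \eqref{heat eq}--\eqref{usimuove}. Finally the sign constraint $\bt+\theta_B>0$ is obtained by comparison: testing \eqref{heat eq} with $(\bt+\theta_B)^-\in H^1_0(\Omega)$, the source contributes $\dot u\,(\bt+\theta_B)^-(u(t),t)$, which is $\le0$ (it vanishes where $\theta=\bt+\theta_B\ge0$ at the interface, while where $\theta<0<\theta_T$ there, \eqref{cond v} forces $v(\theta)>0$, hence $\dot u\ge0$), so $\|(\bt+\theta_B)^-(t)\|_2$ is nonincreasing and $\theta\ge0$ (for $\theta_0\ge0$), strict positivity following from the parabolic strong maximum principle for $t>0$.

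The step I expect to be the main obstacle is the continuity of $\Phi$ — more precisely, the passage to the limit in $v\bigl(\bt(u(t),t)\bigr)$, which involves the trace of $\bt$ along the a priori only continuous moving curve $t\mapsto u(t)$, a borderline object for $\bt\in L^2(0,T;H^1_0(\Omega))$, for which only the weak bounds inherited from the linear subproblem are available. The point is that, in one dimension, these bounds are nonetheless enough — through the compact embedding $H^1_0(\Omega)\hookrightarrow\hookrightarrow C(\overline\Omega)$ and Aubin--Lions — to upgrade $\bt_n\to S(u)$ to strong convergence in $L^2(0,T;C(\overline\Omega))$, which is exactly the topology in which the trace passes to the limit. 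The remaining points are more routine: closing the a priori bound on the interface speed uniformly in time (via the heat-kernel representation and iteration; a bounded $v$ would make this immediate, and the maximum principle of the next subsection gives it as well), and confining $u$ to $\overline\Omega$ (handled by the truncation in $\Phi$ and the cut-off in \eqref{usimuove}).
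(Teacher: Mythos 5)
Your argument is correct in outline, but it is a genuinely different route from the paper's: the paper gives no self-contained proof and simply refers to \cite{Vis}, where existence is obtained by a Galerkin approximation of the coupled system (in the same spirit as the mollified problem \eqref{eq approssimata bis} that the paper introduces later for the regularity result, where $\delta_{u(t)}$ is smeared into $\delta^\eps_{u(t)}$ and the singular coupling is regularised before passing to the limit). You instead decouple the system and run a Schauder fixed point on the interface position $u$ alone, reconstructing $\bart=S(u)$ from the linear parabolic subproblem. What your route buys is that it isolates the single genuinely nonlinear and nonlocal feature of the problem --- the trace $t\mapsto\bart(u(t),t)$ along a merely continuous moving curve --- and shows that one-dimensional compactness, $H^1_0(\Omega)\hookrightarrow\hookrightarrow C(\overline\Omega)$ fed into Aubin--Lions, is exactly what makes this trace stable under perturbation of $u$; this is the correct identification of the crux, and the continuity argument for $\Phi$ is sound. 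What the Galerkin/regularisation route buys is that the same approximation scheme is reused verbatim for the later a priori estimates (Propositions \ref{MacPrin} and \ref{regolare}), so nothing has to be proved twice.

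Two places where your sketch is thinner than it should be. First, the self-mapping of $\mathcal K$: the plain energy estimate only gives $\|\dot{\Phi(u)}\|_{L^2(0,t_0)}\le A+B\|\dot u\|_{L^2(0,t_0)}$ with a constant $B$ that does \emph{not} shrink as $t_0\to0$ (Lipschitz $v$ need not be bounded, and $\|\bart(\cdot,t)\|_{C[0,L]}\le c\|\bart_s(\cdot,t)\|_2$ is only square-integrable in $t$); you do need the parabolic smoothing you allude to, namely $\|e^{\tau\Delta}\delta_x\|_{C[0,L]}\le c\tau^{-1/2}$ and Young's convolution inequality, which turn the Duhamel contribution into $ct_0^{1/2}\|\dot u\|_{L^2}$ and give the small-time contraction of the ball; the continuation to $[0,T]$ then requires closing a Gronwall loop for $\|\bart\|_{L^\infty(0,t;L^2)}+\|\dot u\|_{L^2(0,t)}$ so the step length does not degenerate. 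Second, the constraint $\bart+\theta_B>0$ of Definition \ref{Def ws} cannot be derived from the stated hypotheses ($v$ Lipschitz, $\bart_0\in L^2$): your comparison argument correctly uses \eqref{cond v} and a sign condition on the initial temperature, neither of which appears in the theorem as stated. That is a defect of the statement rather than of your proof, but it should be flagged rather than silently repaired.
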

We refer to \cite{Vis} for the proof of this result that can be carried out via a Galerkin approximation. 

The following proposition gives some good behaviour of the solutions dependent only on \eqref{cond v}:
\begin{proposition}
\label{MacPrin}
Let $(\bt,u)$ be a weak solution as in Definition \ref{Def ws}, $\theta_0$ be measurable
and let $v$ satisfy \eqref{cond v}. Then, if $\theta_c \leq 0$ and $\theta_c\leq\bart_0\leq 0$ a.e. in $\Omega$, we have
\beq
\label{limit 1}
\theta_c\leq\bt(s,t)\leq 0,\qquad\forall s\in\Omega,\text{ a.e. }%\forall 
t\in (0,T).
\eeq
If $\theta_c \geq 0$ and $\theta_c\geq\theta_0\geq 0$ a.e. in $\Omega$, we have
\beq
\label{limit 2}
\theta_c\geq\bt(s,t)\geq 0,\qquad \forall s\in\Omega,\text{ a.e. }%\forall 
t\in (0,T).
\eeq
\end{proposition}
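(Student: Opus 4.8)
The plan is to run a classical truncation (De Giorgi--Stampacchia) argument on the weak formulation \eqref{heat eq}, using the sign condition \eqref{cond v} to kill the contribution of the free--boundary source $\langle\dot u\,\delta_u,\psi\rangle$. I will prove only \eqref{limit 1}; \eqref{limit 2} follows by the symmetric computation. Two facts will be used throughout. First, since $\Omega=(0,L)$ is one--dimensional, $H^1_0(\Omega)\hookrightarrow C(\overline\Omega)$, so $\langle\dot u\,\delta_u,\psi\rangle=\dot u(t)\,\psi(u(t))$ when $u(t)\in\Omega$, and this term vanishes otherwise (by \eqref{usimuove}, $\dot u(t)=0$ whenever $u(t)\notin\Omega$). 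Second, by the standard chain rule for functions in $L^2(0,T;H^1_0(\Omega))\cap H^1(0,T;H^{-1}(\Omega))$ (see e.g.\ \cite{Boyer,Evans}), $t\mapsto\|\bt^+(\cdot,t)\|_2^2$ and $t\mapsto\|(\bt(\cdot,t)-\theta_c)^-\|_2^2$ are absolutely continuous, with $\langle\dot\bt,\bt^+\rangle=\tfrac12\tfrac{d}{dt}\|\bt^+\|_2^2$ and $\langle\dot\bt,(\bt-\theta_c)^-\rangle=-\tfrac12\tfrac{d}{dt}\|(\bt-\theta_c)^-\|_2^2$ for a.e.\ $t$.

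For the upper bound $\bt\le0$ I would test \eqref{heat eq} with $\psi=\bt^+(\cdot,t)$, which is admissible because $H^1_0(\Omega)$ is a lattice; note $\bt^+(\cdot,0)=(\bart_0)^+=0$ since $\bart_0\le0$. The diffusion term $\int_\Omega K\,\p_s\bt\,\p_s\bt^+\,\mathrm dx=K\|\p_s\bt^+\|_2^2$ is nonnegative, so
\[
\tfrac12\tfrac{d}{dt}\|\bt^+(\cdot,t)\|_2^2\ \le\ \dot u(t)\,\bt^+(u(t),t)\qquad\text{for a.e.\ }t .
\]
The key step is to see that the right--hand side is $\le0$: if $\bt(u(t),t)\le0$ it is zero; if $\bt(u(t),t)>0$ then $\bt(u(t),t)>\theta_c$ because $\theta_c\le0$, so \eqref{cond v} forces $v(\bt(u(t),t))<0$, i.e.\ $\dot u(t)<0$, and the product is $\le0$. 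Therefore $\|\bt^+(\cdot,t)\|_2\equiv0$, i.e.\ $\bt(\cdot,t)\le0$ a.e.\ in $\Omega$; continuity in $s$ then gives $\bt(s,t)\le0$ for every $s\in\Omega$ and a.e.\ $t$.

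For the lower bound $\bt\ge\theta_c$ I would test with $\psi=(\bt-\theta_c)^-(\cdot,t)$; this belongs to $H^1_0(\Omega)$ precisely because the trace of $\bt-\theta_c$ on $\p\Omega$ equals $-\theta_c\ge0$ (this is where the hypothesis $\theta_c\le0$ is used), and $(\bart_0-\theta_c)^-=0$ since $\bart_0\ge\theta_c$. The analogous manipulation yields
\[
\tfrac12\tfrac{d}{dt}\|(\bt-\theta_c)^-(\cdot,t)\|_2^2\ \le\ -\dot u(t)\,(\bt-\theta_c)^-(u(t),t)\qquad\text{for a.e.\ }t ,
\]
and now if $\bt(u(t),t)<\theta_c$ then \eqref{cond v} gives $v(\bt(u(t),t))>0$, so $\dot u(t)>0$ and the right--hand side is $\le0$; otherwise it is zero. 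Hence $\bt\ge\theta_c$, which completes \eqref{limit 1}. The bound \eqref{limit 2} is proved identically, testing with $\bt^-$ for the lower bound $\bt\ge0$ and with $(\bt-\theta_c)^+$ for the upper bound $\bt\le\theta_c$, the hypothesis $\theta_c\ge0$ now ensuring the test functions vanish on $\p\Omega$.

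I expect the only genuinely delicate point to be the handling of the singular source, not the energy estimate: one must check that $t\mapsto\bt(u(t),t)$ is well defined and measurable (it is, since $u\in H^1(0,T)\subset C([0,T])$ and $\bt\in L^2(0,T;C(\overline\Omega))$) and, more importantly, that the value of the truncation at the free boundary $s=u(t)$ always has a sign compatible with the sign of $\dot u(t)$ dictated by \eqref{cond v}. Once this pointwise--in--time bookkeeping is in place the differential inequalities above are homogeneous, so no Gronwall constant is needed and the conclusion follows at once.
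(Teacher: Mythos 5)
Your proof is correct and follows essentially the same route as the paper's: testing the weak formulation with the truncations $\bt^+$ and $(\bt-\theta_c)^-$ and using the sign condition \eqref{cond v} to show the free-boundary source term is nonpositive. The only (inessential) difference is the order of the two bounds — the paper proves $\bt\ge\theta_c$ first and uses it to get $v\le0$ in the second step, whereas your direct case-split on the sign of $\bt(u(t),t)$ makes each bound self-contained.
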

\begin{proof}
The proof of a similar statement can be found in \cite{Vis}, but we report here the proof for the sake of completeness. We start with the proof of \eqref{limit 1}. Let us choose as a test function $\psi$ in \eqref{heat eq} $\psi=(\bt-\theta_c)^-:=- \max{(0,\theta_c-\bt)}$. In this case, by using \eqref{cond v} we have
$$
\frac12\dert \|(\bt-\theta_c)^-\|_2^2 + \|\nabla(\bt-\theta_c)^-\|_2^2 \leq v(\bart(u(t),t))(\bt(u(t),t)-\theta_c)^-		\leq0,\qquad{\text{a.e. } t\in(0,T)}.
$$
An integration in time, using the hypothesis on $\bart_0$ leads to $\|(\bt-\theta_c)^-\|_{H^1}=0$ for a.e. $t\in(0,T)$, that is $\bart(s,t)\geq \theta_c$ everywhere in $\Omega$, for almost every $t\in[0,T)$. % As on the other hand $\bt\in \Ltv$, and hence $\bt\in C([0,1])$ for almost every $t\in(0,T)$, we get the first bound.
Let us now choose in \eqref{heat eq} $\psi=\bart^+:=\max{(0,\bart)}$, and notice that \eqref{cond v} together $\theta_c\leq \bt$ implies $v\leq 0$. Therefore,
$$
\dert \|\bt^+\|_2^2+ \|\nabla\bt^+\|_2^2\leq v(\bart(u(t),t))\bt^+(u(t),t)		\leq0,\qquad{\text{a.e. } t\in(0,T)}.
$$
This, together with the fact that $\|\bart^+_0\|_2=0$, proves the first statement. The second statement can be proved in an analogous way by first testing against $(\bt-\theta_c)^+$ and then against $\theta^-$.
\end{proof}

\begin{remark}
\label{RemMP}
\normalfont
The above proposition states that, under hypothesis \eqref{cond v}, a weak solution is bounded between $\theta_B$ and $\theta_T$. Therefore,
if our boundary condition $\theta_B$ is close to $\theta_T$, then the temperature solutions $\bart$ have small $L^\infty$ norm. Furthermore, this implies that the position of the interface is monotone, in the sense that if the boundary of the martensite sub-domain is expanding (resp. shrinking) then, as long as the boundary conditions do not change, the domain keeps expanding (resp. shrinking). %The assumption on $v$ being Lipschitz can thus be relaxed to a $v $ being just continuous and satisfying \eqref{cond v} if $\min\{0,\theta_c\}\leq\theta_0\leq\max\{0,\theta_c\}$.
\end{remark}

We can prove further regularity for $\theta$, as stated in the following
\begin{proposition}
\label{regolare}
Let $\bart_0\in H^1_0(\Omega)$, $0\leq \bart_0(s) \leq \theta_c$ (or $0\geq \bart_0(s) \geq \theta_c$) for each $s\in[0,1]$, $u_0\in\Omega$ and $v\in W^{1,\infty}_{loc}(\R)$ satisfy \eqref{cond v}. Then, for every $T>0$,
$$
\dot{\bart}\in L^2(0,T;L^2(\Omega)),\qquad\bart\in L^2(0,T;W^{1,\infty}(\Omega))\cap L^\infty(0,T;H^1_0(\Omega)).
$$
Furthermore, for a.e. $t\in(0,T)$ $\bart_s$ is continuous in $[0,u(t))\cap(u(t),L]$.
\end{proposition}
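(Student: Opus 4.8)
The plan is to remove the measure-valued source from \eqref{heat eq} by an explicit elliptic corrector, thereby reducing the whole statement to the single assertion $u\in H^2(0,T)$, and then to obtain this last fact by a bootstrap carried out on the Galerkin approximants underlying Theorem~\ref{esistenza 1d}. First I would record the consequences of Proposition~\ref{MacPrin}: under the sign hypothesis on $\bart_0$ one has $\min\{0,\theta_c\}\le\bt\le\max\{0,\theta_c\}$, hence $|\dot u(t)|=|v(\bt(u(t),t))|\le M$ with $M:=\sup_{[\min\{0,\theta_c\},\max\{0,\theta_c\}]}|v|<\infty$, so $u\in W^{1,\infty}(0,T)$. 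Let $G$ be the Dirichlet Green's function of $-K\partial_{ss}$ on $\Omega=(0,L)$ — the tent function, which is Lipschitz in $s$ with $|G|+|\partial_\xi G|\le c/K$ and vanishes at $s=0,L$ — and set $\phi(s,t):=\dot u(t)\,G(s,u(t))$ and $w:=\bt-\phi$. Since by definition of $G$ one has $K\mint\phi_s\psi_s\,\mathrm dx=\dot u(t)\,\psi(u(t))$ for every $\psi\in H^1_0(\Omega)$, subtracting this from \eqref{heat eq} cancels the Dirac term and $w$ satisfies, in the weak sense, the heat equation $\dot w-Kw_{ss}=-\dot\phi$ with homogeneous Dirichlet data and initial datum $w(\cdot,0)=\bart_0-\phi(\cdot,0)\in H^1_0(\Omega)$ (recall $\bart_0\in H^1_0$ and $\phi(\cdot,0)$ is a tent function). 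Moreover $\phi\in L^\infty(0,T;W^{1,\infty}(\Omega))$ with $\|\phi(\cdot,t)\|_{W^{1,\infty}}\le cM/K$.

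The energy identity is the core. Testing the weak form of the $w$-equation with $\psi=\dot w$ — legitimate at the smooth Galerkin level — and using $\mint w_s\dot w_s\,\mathrm dx=\tfrac12\dert\|w_s\|_2^2$ gives
\[
\|\dot w(t)\|_2^2+\frac K2\dert\|w_s(t)\|_2^2=-\mint\dot\phi\,\dot w\,\mathrm dx\le\frac12\|\dot w(t)\|_2^2+\frac12\|\dot\phi(t)\|_2^2 .
\]
Integrating in time yields $\|\dot w\|_{L^2(0,T;L^2)}^2+K\|w_s\|_{L^\infty(0,T;L^2)}^2\le c\bigl(\|w_s(0)\|_2^2+\|\dot\phi\|_{L^2(0,T;L^2)}^2\bigr)$, and then, by comparison, $Kw_{ss}=\dot w+\dot\phi\in L^2(0,T;L^2)$, so that
\[
w\in L^2(0,T;H^2(\Omega))\cap H^1(0,T;L^2(\Omega))\cap L^\infty(0,T;H^1_0(\Omega)) .
\]
Writing $\dot\phi=\ddot u\,G(\cdot,u(t))+\dot u^2\,\partial_\xi G(\cdot,u(t))$, the second summand lies in $L^\infty(\Omega\times(0,T))$ because $|\dot u|\le M$ and $|\partial_\xi G|\le c/K$; hence the estimate $\dot\phi\in L^2(0,T;L^2)$ — and with it the whole proposition — reduces to proving $u\in H^2(0,T)$, i.e.\ $\dert\bigl(v(\bt(u(t),t))\bigr)\in L^2(0,T)$.

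This last point is the main obstacle, because the free-boundary speed is tied nonlinearly to the trace of $\bt$ at the interface, whose time-regularity is exactly what is not yet known; I would break the circularity by iteration. Writing $\bt(u(t),t)=w(u(t),t)+\dot u(t)\,g(t)$ with $g(t):=G(u(t),u(t))=\tfrac1{KL}u(t)(L-u(t))\le\tfrac{L}{4K}$, differentiating in $t$ and solving the resulting relation for $\ddot u$ — the factor $1-v'g$ being bounded away from zero, automatically if $v$ is non-increasing and in general under $\|v'\|_\infty L<4K$ — one obtains $|\ddot u|\le c\bigl(|\dert w(u(t),t)|+\|\dot u\|_\infty^2\bigr)$, so it remains to bound $\dert w(u(t),t)$ in $L^2(0,T)$. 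The spatial contribution $w_s(u(t),t)\dot u(t)$ is controlled by $M\|w\|_{L^2(0,T;H^2)}$, since $H^2(\Omega)\hookrightarrow C^1(\bar\Omega)$ in one dimension; the delicate term is the time contribution, which I would estimate by parabolic boundary-trace inequalities for $w$ on each of the two subdomains $\{s<u(t)\}$ and $\{s>u(t)\}$ (after flattening the $W^{1,\infty}$-in-time interface $s=u(t)$ if convenient), using that on each subdomain $w$ solves a heat equation whose right-hand side involves only the already-bounded pieces of $\dot\phi$. Running these estimates iteratively, starting from $u\in W^{1,\infty}(0,T)$, each round upgrades the regularity of $u$ and hence of $\phi$, and the scheme closes at $u\in H^2(0,T)$. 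Throughout, every formal manipulation — the choice $\psi=\dot w$ and the differentiation of $t\mapsto\bt(u(t),t)$ — is performed on the smooth Galerkin solutions of Theorem~\ref{esistenza 1d}, the bounds obtained being uniform in the discretisation parameter so that they survive the passage to the limit.

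It remains to assemble. From $w\in H^1(0,T;L^2)$ and $\dot\phi\in L^2(0,T;L^2)$ we get $\dot{\bt}=\dot w+\dot\phi\in L^2(0,T;L^2(\Omega))$; from $w\in L^\infty(0,T;H^1_0)$ and $\phi\in L^\infty(0,T;W^{1,\infty}(\Omega))$ we get $\bt\in L^\infty(0,T;H^1_0(\Omega))$. Since $H^2(\Omega)\hookrightarrow C^1(\bar\Omega)$ we have $w_s\in L^2(0,T;C^0(\bar\Omega))$, while $\phi_s(\cdot,t)=\dot u(t)\,\partial_s G(\cdot,u(t))$ is piecewise constant with $\|\phi_s(\cdot,t)\|_\infty\le cM/K$, so $\bt_s=w_s+\phi_s\in L^2(0,T;L^\infty(\Omega))$, i.e.\ $\bt\in L^2(0,T;W^{1,\infty}(\Omega))$. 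Finally, for a.e.\ $t$ one has $w(\cdot,t)\in H^2(\Omega)\hookrightarrow C^1(\bar\Omega)$, so $w_s(\cdot,t)$ is continuous on all of $[0,L]$, whereas $\phi_s(\cdot,t)$ is constant on $[0,u(t))$ and on $(u(t),L]$ with a single jump at $s=u(t)$; hence $\bt_s(\cdot,t)=w_s(\cdot,t)+\phi_s(\cdot,t)$ is continuous on each of $[0,u(t))$ and $(u(t),L]$, which is the last assertion. The single genuine difficulty is the $H^2(0,T)$-regularity of the free boundary discussed above.
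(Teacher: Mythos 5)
Your overall architecture is sound in its easy parts (the corrector $\phi=\dot u\,G(\cdot,u(t))$ kills the Dirac source, and the final assembly of the four regularity claims from $w\in L^2(0,T;H^2)\cap H^1(0,T;L^2)$ plus the piecewise-constant $\phi_s$ is correct), but the proof has been reduced to a claim — $u\in H^2(0,T)$ — that is both unproved and stronger than anything the proposition needs, and the sketched route to it does not close. Concretely: (i) your inversion of $\ddot u\,(1-v'g)=v'\bigl(\tfrac{d}{dt}w(u(t),t)+\dot u\,\dot g\bigr)$ requires $1-v'g$ to be bounded away from zero, which you obtain only under an extra hypothesis ($v$ non-increasing, or $\|v'\|_\infty L<4K$) that is not in the statement of Proposition \ref{regolare}; (ii) even granting that, the term $\tfrac{d}{dt}w(u(t),t)$ is not controlled by the parabolic regularity you have. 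The class $L^2(0,T;H^2)\cap H^1(0,T;L^2)$ gives $\dot w\in L^2(0,T;L^2)$ with no pointwise-in-space information, so $\dot w(u(t),t)$ is not even well defined, and interpolation only yields roughly $H^{1/2}$ time regularity of the trace $t\mapsto w(u(t),t)$ — exactly half a derivative short of what your iteration needs to close. The appeal to ``parabolic boundary-trace inequalities after flattening the interface'' hides the fact that flattening a $W^{1,\infty}$-in-time interface injects $\dot u$ into the coefficients and $\ddot u$ into the source, reintroducing the unknown. Finally, the corrector itself is circular at the approximation level: $\dot\phi$ contains $\ddot u\,G$, so you cannot even write the $w$-equation, let alone test it with $\dot w$, before knowing $\ddot u$ exists.

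The paper avoids all of this and never needs more than $\dot u\in L^\infty(0,T)$ (which follows from Proposition \ref{MacPrin} and \eqref{cond v}). It mollifies the Dirac mass, $\delta_{u(t)}\rightsquigarrow\dep$, keeps $u$ fixed (identifying the limit via the uniqueness Lemma \ref{uniq par}), and tests with $\dot\te_\eps$. The key observation is that the troublesome source term has an exact antiderivative structure: with $V(r)=\int_0^r v$,
\begin{equation*}
\mint v(\te_\eps)\dep\,\dot\te_\eps\,\mathrm ds=\dert\mint V(\te_\eps)\dep\,\mathrm ds+\dot u\mint \te_{\eps,s}\,v(\te_\eps)\dep\,\mathrm ds ,
\end{equation*}
where the first term is bounded uniformly because $V$ is $C^1$ and $\te_\eps$ is uniformly bounded by the maximum principle, and the second is absorbed using $|\dot u|\le c$ together with the one-dimensional bound $\|\te_{\eps,s}\|_{C[0,L]}\le c(\|\te_{\eps,s}\|_2+\|\te_{\eps,ss}\|_1)$ and a comparison in the equation to control $\|\te_{\eps,ss}\|_1$ by $\|\dot\te_\eps\|_2$ plus bounded quantities. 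No second derivative of $u$ ever appears. I would suggest you keep your corrector only for the final bookkeeping step (reading $\te_{ss}+\dot u\,\delta_{u(t)}\in L^2(L^2)$ as continuity of $\te_s+\dot u\,\chi_{\{s<u(t)\}}$), and replace the $u\in H^2$ detour by this antiderivative device.
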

The proof of this result relies on
\begin{lemma}
\label{uniq par}
Let $v\in W^{1,\infty}_{loc}(\R)$. Let $(u,\bart_1)$ and $(u,\bart_2)$ be two weak solutions to Problem \eqref{pb 1d simple} sharing the same $u(t)\in H^1(0,T)$ and the same initial datum $\bart_0\in L^\infty(\Omega;[\min\{0,\theta_c\},\max\{0,\theta_c\}])$. Then, $\bart_1=\bart_2$.
\end{lemma}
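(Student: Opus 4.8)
The plan is to exploit the fact that the two solutions are driven by the \emph{same} singular source. Since $(u,\bart_1)$ and $(u,\bart_2)$ are both weak solutions in the sense of Definition \ref{Def ws} and share the same $u\in H^1(0,T)$, each of $\bart_1,\bart_2$ satisfies \eqref{heat eq} with right-hand side $\langle\dot u\,\delta_u,\psi\rangle=\dot u(t)\,\psi(u(t))$, and this term is \emph{identical} for the two of them. Hence the difference $w:=\bart_1-\bart_2\in L^2(0,T;H^1_0(\Omega))\cap H^1(0,T;H^{-1}(\Omega))$ solves the homogeneous linear parabolic problem
$$
\langle\dot w,\psi\rangle+\mint K\,w_s\,\psi_s\,\mathrm{d}x=0,\qquad\text{a.e. }t\in(0,T),\quad\forall\,\psi\in H^1_0(\Omega),
$$
together with $w|_{t=0}=0$, since both solutions have initial datum $\bart_0$. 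In particular the kinetic relation \eqref{usimuove} and the function $v$ never enter the argument: the hypothesis $v\in W^{1,\infty}_{loc}(\R)$ is only needed to guarantee that \eqref{usimuove} is meaningful for each of the two given solutions.

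Once this reduction is in place, uniqueness follows from the standard energy estimate. First I would record that in one space dimension $H^1_0(\Omega)\hookrightarrow C(\overline\Omega)$, so $\delta_u\in H^{-1}(\Omega)$ and the common source $t\mapsto\dot u(t)\delta_{u(t)}$ belongs to $L^2(0,T;H^{-1}(\Omega))$; subtracting the two weak formulations at the level of $H^{-1}$-valued maps is therefore legitimate and the singular term cancels exactly. Since $w\in L^2(0,T;H^1_0(\Omega))\cap H^1(0,T;H^{-1}(\Omega))$, I would invoke the usual regularity/chain-rule lemma for such functions (see e.g., \cite{Boyer}) to obtain $w\in C([0,T];L^2(\Omega))$, so that $w(0)=0$ holds in $L^2(\Omega)$, and that $t\mapsto\|w(t)\|_2^2$ is absolutely continuous with $\dert\|w\|_2^2=2\langle\dot w,w\rangle$. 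Testing the homogeneous equation with the time-dependent function $\psi=w(t)$ (justified by density and measurability in $t$, as is standard) then yields
$$
\frac12\dert\|w(t)\|_2^2+K\|w_s(t)\|_2^2=0,\qquad\text{a.e. }t\in(0,T).
$$
Integrating over $(0,t)$ and using $w(0)=0$ gives $\|w(t)\|_2^2\le0$ for every $t$, hence $w\equiv0$ and $\bart_1=\bart_2$.

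I do not expect a genuine obstacle here. The only two points requiring a line of care are the ones indicated above: that $\delta_u$ is a well-defined element of $H^{-1}(\Omega)$ in dimension one, so that the subtraction of the two equations makes sense and the singular source cancels; and the passage from the time-independent test functions in \eqref{heat eq} to the choice $\psi=w(t)$ in the energy identity, which is the standard Lions-Magenes manipulation for parabolic equations with $H^{-1}$-valued time derivative. No Gronwall inequality is needed, since the equation for $w$ contains no zeroth-order term.
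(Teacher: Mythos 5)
Your reduction rests on the claim that the two right-hand sides of \eqref{heat eq} are \emph{identical} because both solutions share the same $u$, so that the singular source cancels and $w=\bart_1-\bart_2$ solves the homogeneous heat equation. Under the most literal reading of Definition \ref{Def ws} this is defensible: if both pairs satisfy \eqref{usimuove} with the same $u$, then differentiating gives $\chi_{\{u(t)\in\Omega\}}v(\bart_1(u(t),t))=\chi_{\{u(t)\in\Omega\}}v(\bart_2(u(t),t))=\dot u(t)$ a.e., and your energy argument then correctly yields $w\equiv 0$. But this reading trivializes the lemma, and it is not the statement the paper actually proves or needs. Two hypotheses you explicitly declare unused — $v\in W^{1,\infty}_{loc}(\R)$ beyond mere well-posedness of \eqref{usimuove}, and the confinement $\bart_0\in L^\infty(\Omega;[\min\{0,\theta_c\},\max\{0,\theta_c\}])$ (which via Proposition \ref{MacPrin} keeps both solutions in a fixed compact set where $v$ has a uniform Lipschitz constant) — are exactly the ingredients of the intended argument. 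The lemma is invoked in the proof of Proposition \ref{regolare} to identify the limit $\te$ of the regularized problems \eqref{eq approssimata bis} with the original solution $\bart$: there the limit satisfies the heat equation with source $v(\te(u(t),t))\,\delta_{u(t)}$ for the \emph{given} $u$, but is not known a priori to satisfy the kinetic relation $\dot u = v(\te(u(t),t))\chi_{\{u(t)\in\Omega\}}$. In that situation the two sources do not cancel, and your argument gives nothing.

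The content of the lemma is therefore uniqueness for the semilinear problem in which the source depends on the unknown through its trace at the moving point: the difference $\te=\bart_1-\bart_2$ satisfies
\begin{equation*}
\langle\te_t,\psi\rangle+\mint\te_s\psi_s\,\mathrm ds=\bigl(v(\bart_1(u(t),t))-v(\bart_2(u(t),t))\bigr)\psi(u(t)),
\end{equation*}
and the right-hand side must be absorbed, not cancelled. The paper tests with $\psi=\te$, uses the Lipschitz bound $|v(\bart_1(u,t))-v(\bart_2(u,t))|\leq c\,|\te(u(t),t)|$, controls the point value by $|\te(u(t),t)|\leq c\|\te\|_{W^{r,2}}$ with $r=\tfrac12+\eta$, interpolates $\|\te\|_{W^{r,2}}\leq c\|\te\|_{H^1}^{r}\|\te\|_2^{1-r}$ so that Young's inequality leaves $\tfrac12\dert\|\te\|_2^2+\tfrac12\|\te_s\|_2^2\leq c\|\te\|_2^2$, and concludes by Gronwall. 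Your proof omits precisely this trace--interpolation--Gronwall step, which is the mathematical core of the lemma; as written it establishes only the degenerate case in which the two sources coincide.
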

\begin{proof}
Let $\te:=\bart_1-\bart_2$. Then, it satisfies
$$
\langle\te_t,\psi\rangle+\mint\te_s\psi_s\,\mathrm ds= (v(\bart_1(u(t),t))-v(\bart_2(u(t),t)))\psi(u(t)),% \forall \psi\in H^1_0(0,L), \text{ a.e. }t\in(0,T),
$$
for every $\psi\in H^1_0(0,L)$, a.e. $t\in(0,T)$. By testing this equation with $\psi=\te$ and using the fact that $v$ is Lipschitz we thus get
$$
\frac12\dert\|\te\|_2^2+\|\te_s\|_2^2 \leq c \te^2 (u(t),t).
$$
Now, \cite[Cor. 27]{Simon}, implies
$$
|\te (u(t),t)|\leq c \|\te\|_{W^{r,2}}(t), 
$$
with $r = \frac12+\eta$ and $\eta>0$ arbitrary. 
An interpolation inequality thus leads to
$$
\frac12\dert\|\te\|_2^2+\|\te_s\|_2^2 \leq c (\|\te\|_2^2+ \|\te_s\|_2^{2r}\|\te\|_2^{2(1-r)})\leq c \|\te\|_2^2+ \frac12 \|\te_s\|_2^2.
$$
The Gronwall lemma and the fact that $\te(0,s)=0$ for a.e. $s\in\Omega$ conclude the proof.
\end{proof}

\begin{proof}[Proof of Prop. \ref{regolare}]
Let us fix a solution $(\bart,u)$ among the possible solutions to Problem \eqref{pb 1d simple}. We drop the bar over $\bart$, and we put all the constants equal to one to simplify notation. %Below, $c,c_T$ represent generic positive constant. Their values, which may change from line to line or even in the same line, depend only on the parameters and initial data, but is independent of $\eps$. The value of $c_T$ might depend also on $T$. 
Let us also define $\dep(s)=\delta_0^\eps(s-u(t))$ as the convolution of the Dirac measure centred in $u(t)$ with a smooth mollifier. $\dep$ is positive and converges $weakly^*$ in the sense of measures to $\delta_{u(t)}$ for every fixed $t$ as $\eps\to0$ (see \cite{AFP} for details). Furthermore, Fubini's Theorem yields $\|\dep\|_\mathcal M\leq c$, where $c$ is independent of $u(t)$ and $\eps$. For every fixed $\eps>0$ the approximate problem
\begin{align}
\label{eq approssimata bis}
\langle\dot \te_{\eps},\psi\rangle + \mint\te_{\eps,s}\psi_s\, \mathrm ds &= \mint v(\te_\eps(s,t))\dep(s)\psi(s)\,\mathrm ds,\quad &&\forall\psi\in H^1_0, \text{ a.e. $t\in(0,T)$}\\
\label{ci approssimata bis}
\te_\eps(s,0)&=\te_{0}(s),\quad &&\text{a.e. $s\in\Omega$}
\end{align}
\noindent
admits a solution $\te_\eps$. The following bound can be obtained thanks to Proposition \ref{MacPrin} by choosing $\psi=\te_\eps$ in \eqref{eq approssimata bis}
\beq
\label{es stab 1}
\|\te_{\eps}\|_{L^\infty(0,T;L^2)}+
\|\te_{\eps,s}\|_{L^2(0,T;L^2)}+
\|\dot\te_{\eps}\|_{L^2(0,T;H^{-1})}
\leq c_T,
\eeq
so that, by Lemma \ref{uniq par}, we may assume that
\begin{align}
%\label{conv 0}
%&\te_n\tow \te \quad \text{weakly*  in } L^\infty(0,T;L^2),\\
\label{ten conv}
&\te_\eps\tow \te \quad \text{weakly  in } \Ltv\cap H^1(0,T;H^{-1}),\\
\label{strong conv}
&\te_\eps\to \te \quad \text{strongly  in }  L^2(0,T;C[0,L]).
%\\% \;\text{and a.e.  in } \Omega\times(0,T).
%\\
%\label{ten conv 2}
%&u_n\tow u \quad \text{weakly  in } H^1(0,T), \quad \text{strongly  in }  C[0,T].
\end{align}
We recall that here $\dot u(t) = v(\te(u(t),t))\chi_{\{u(t)\in(0,L)\}}$, as the equation for $u$ has not been approximated. Besides, by bootstrapping and using standard regularity for the heat equation (see e.g., \cite{Evans}) we also have $\te_{\eps,ss}\in L^2(0,T;H^2(\Omega))$ and $\te_{\eps,t}\in L^2(0,T;L^2(\Omega))$. Finally, an argument similar to that used to prove Proposition \ref{MacPrin} entails $\max_{s\in[0,L]}|\theta_\eps(s,t)|\leq \theta_c$ for a.e. $t\in(0,T).$ We can hence choose $\psi = \dot\te_{\eps}$ in \eqref{eq approssimata bis} and get
\beq
\begin{split}
\label{reg 01}
\|\dot\te_\eps\|_2^2+\frac12\dert\|\te_{\eps,s}\|_2^2&=\mint v(\te_\eps)\dep\dot\te_{\eps}\,\mathrm ds\\
&=\dert \mint V(\te_\eps)\dep\,\mathrm ds- \dot u \mint V(\te_\eps)(\dep)_{,s}\,\mathrm ds\\
&=\dert \mint V(\te_\eps)\dep\,\mathrm ds+ \dot u \mint \te_{\eps,s}v(\te_\eps)\dep\,\mathrm ds,
\end{split}
\eeq
where $V(r)=\int_0^r v(\tau)\,\mathrm d\tau$, and we integrated by parts in the last term. In addition, exploiting the additional regularity of $\te_\eps$, we notice that
\[
\begin{split}
|\te_{\eps,s}(x,t)| &\leq c\bigl( \|\te_{\eps,s}\|_{1} +\|\te_{\eps,ss}\|_{1}\bigr) \leq c\Bigl(\|\te_{\eps,s}\|_2 + \|\dot\te_{\eps}\|_{1} + \mint|v(\te_\eps(s,t))\dep(s)|\,\mathrm d s\Bigr).
\end{split}
\]
As $\dep$ is positive, and $v$ is Lipschitz,
$$
\mint |v(\te_\eps)\dep|\,\mathrm d s\leq \mint\dep |v(\te_\eps)|\,\mathrm d s \leq c(1+ \|\te_\eps\|_{C[0,L]})\mint \dep\,\mathrm d s\leq c(1+ \|\te_\eps\|_{C[0,L]}),
$$
which yields
$$
\max_{x\in[0,L]} |\te_{\eps,s}(x,t)|\leq c\bigl(\|\te_{\eps,s}\|_2 +  \|\dot\te_{\eps}\|_2  + 1+ \|\te_\eps\|_{C[0,L]} \bigr).
$$
Therefore,
\beq
\label{stimona}
\begin{split}
|\dot u|\Bigl|\mint \te_{\eps,s}v(\te_\eps)\dep\,\mathrm ds\Bigr|\leq c |\dot u|  \bigl(\|\te_{\eps,s}\|_2 +  \|\dot\te_{\eps}\|_2  + 1+ |\te_\eps|_{C[0,L]} \bigr)(1+\|\te_{\eps,s}\|_2)\\
\leq c+ c |\dot u|^2 (1+\|\te_{\eps,s}\|_2^2)+\frac 14 \bigl(\|\te_{\eps,s}\|_2^2 + \|\dot\te_{\eps}\|_2^2  +\|\te_{\eps,s}\|_2^2\bigr).
\end{split}
\eeq
Let us now consider a sequence of $\eps_j>0$ such that $\eps_j\to0$. We remark that \eqref{strong conv} implies $\te_{\eps_j}\to \te$ strongly in $L^2(0,T;C[0,L])$ and hence in $C[0,L]$ for almost every $t\in(0,T)$. Furthermore, as mentioned above, an argument as in the proof of Proposition \ref{MacPrin} tells us that $\|\te_{\eps_j}(\cdot,t)\|_{C[0,L]}\leq \theta_c$ for almost every $t$. % independent of ${\eps_j}$. 
We denote by $J_T$ the set of points in $(0,T)$ where the above uniform convergence holds and where the uniform bound for $\te_{\eps_j}(\cdot,t)$ holds for every ${\eps_j}$. We point out that this set is measurable and has full measure. 
After exploiting \eqref{stimona} in \eqref{reg 01}, and integrating the resulting inequality in time between $0$ and $t\in J_T$, we obtain
\beq
\begin{split}
\label{reg 02}
\tint \|\dot\te_{\eps_j}\|_2^2(\tau)\,\mathrm d\tau+\|\te_{\eps_j,s}\|_2^2(t)
\leq c+2\mint V(\te_{\eps_j}(s,t))\delta^{\eps_j}_{u(t)}(s)\,\mathrm ds - 2\mint V(\te_{\eps_j}(s,0))\delta^{\eps_j}_{u(0)}(s)\,\mathrm ds\\
 + 2\|\te_{\eps_j,s}\|_2^2(0)+c \tint|\dot u|^2 (1+\|\te_{\eps_j,s}\|_2^2)\,\mathrm d\tau+ \tint\|\te_{\eps_j,s}\|_2^2\,\mathrm d\tau.
\end{split}
\eeq
Now, by means of the regularity on the initial condition we easily deduce
$$
- 2\mint V(\te_{\eps_j}(s,0))\delta^{\eps_j}_{u(0)}(s)\,\mathrm ds + \|\te_{{\eps_j},s}\|_2^2(0)\leq c.
$$
and, as $V\in C^1(\R)$ and $t\in J_T$, 
%\[
%\begin{split}
%2\mint V(\te_{\eps_j}(s,t)) \dep \,\mathrm ds &\leq 2 \|\dep\|_\mathcal M \|V(\te_{\eps_j})(\cdot,t)\|_{C[0,L]}\\
%&\leq c \max_{s\in[0,\|\te_{\eps_j}(\cdot,t))\|_{C[0,L]}]} v(s) \|\te_{\eps_j}(\cdot,t)\|_{C[0,L]}
%\leq c + c \|\te_{\eps_j}(\cdot,t)\|^2_{C[0,L]}
%. 
%\end{split}
%\]
\[
\begin{split}
2\mint V(\te_{\eps_j}(s,t)) \dep \,\mathrm ds &\leq 2 \|\dep\|_\mathcal M \|V(\te_{\eps_j})(\cdot,t)\|_{C[0,L]}
%\\
%&\leq c \max_{s\in[0,\|\te_{\eps_j}(\cdot,t))\|_{C[0,L]}]} v(s) \|\te_{\eps_j}(\cdot,t)\|_{C[0,L]}
%\leq c + c \|\te_{\eps_j}(\cdot,t)\|^2_{C[0,L]}
\leq c. 
\end{split}
\]
%Thus by \cite[Cor. 27]{Simon} together with an interpolation inequality we get
%\[
%2\mint V(\te_{\eps_j}(s,t)) \dep \,\mathrm ds \leq c +  c\|\te_{\eps_j}(\cdot,t)\|^2_2+  \frac12\|\te_{\eps_j,s}(\cdot,t)\|^2_2\leq c +    \frac12\|\te_{\eps_j,s}(\cdot,t)\|^2_2.
%\]
%Here, we made use of the fact that $\theta \in C([0,T];L^2(\Omega))$ in the last inequality. 
Finally, we know that \eqref{es stab 1} holds, and that, thanks to Proposition \ref{MacPrin} and Remark \ref{RemMP}, $\dot u\in L^\infty(0,T)$. Therefore, 
$$
\tint|\dot u|^2 (1+\|\te_{\eps_j,s}\|_2^2)\,\mathrm d\tau\leq c,
$$
and \eqref{reg 02} becomes
\[
\tint \|\dot\te_{\eps_j}(\tau)\|_2^2\,\mathrm{d}\tau+\|\te_{{\eps_j},s}\|_2^2(t)
\leq c,\qquad \text{a.e. $t\in (0,T)$},
\]
where $c$ depends neither on $t\in (0,T)$, nor on ${\eps_j}$. We can thus pass to the limit as $\eps\to 0$, and obtain
$$
\te\in L^2(0,T;L^2(\Omega)),\qquad \te_s\in L^\infty(0,T;L^2(\Omega)).
$$
By comparison, this also implies that $\te_{ss}+\dot u\delta_{u(t)}\in L^2(0,T;L^2(\Omega))$, that is $$\te_s+\dot u\chi_{\{s<u(t)\}}\in L^2(0,T;C[0,L]),$$ which yields
$$
\te\in L^2(0,T;W^{1,\infty}(\Omega)).
$$
\end{proof}
\begin{remark}
\rm{
The regularity proved in Proposition \ref{regolare} entails uniqueness thanks to an argument as that in \cite{Xien} for classical solutions. We remark that without having $\bart\in L^2(0,T;W^{1,\infty}(\Omega))$ uniqueness of the equation $\dot u = v(\bart(u(t),t))\chi_{\{ u(t)\in(0,L)\}}$ would be hopeless. Nonetheless, we point out that even if $\bart_0\notin H^1_0,$ the proof of Proposition \ref{regolare} entails that an instantaneous regularization takes place, and that in this case $\bart\in L^2(\rho,T;W^{1,\infty}(\Omega))$, for each $\rho>0$.
}
\end{remark}
As a corollary to the above regularity result we can prove that the austenite-martensite interface reaches the domain boundary in finite time, and that $\bart\to 0$ as $t\to\infty$. 
\begin{theorem}
Let $(\bart,u)$ be a weak solution in the sense of Definition \ref{Def ws}, let $u_0\in(0,L)$ and let $\bart_0\in H^1_0(\Omega)$ be such that $0\leq \bart_0(s)\leq \theta_c$ (or $0\geq \bart_0(s)\geq \theta_c$) for every $s\in\Omega$. Let also $v\in W_{loc}^{1,\infty}(\R)$ satisfy \eqref{cond v}. 
Then there exists $t^*>0$ such that $u(t^*)=L$ (resp. $u(t^*)=0$). Furthermore, $\|\bart\|_{2}(t)\to 0$ as $t\to\infty$.
\end{theorem}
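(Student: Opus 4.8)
The plan is to combine the maximum principle of Proposition~\ref{MacPrin} with a single global-in-time energy estimate; no use of the refined regularity of Proposition~\ref{regolare} is needed. By the symmetry of the two cases I would only treat $0\le\bart_0\le\theta_c$, in which case $\theta_c>0$ (recall $\theta_B\ne\theta_T$) and the target is $u(t^*)=L$; the case $0\ge\bart_0\ge\theta_c$ is identical after reversing all orientations. First I would note that Proposition~\ref{MacPrin} gives $0\le\bart(s,t)\le\theta_c$ for every $s\in\Omega$ and a.e.\ $t$, so in particular $\bart(u(t),t)\le\theta_c$ (the pointwise trace being meaningful because $\bart(\cdot,t)\in H^1_0(\Omega)\hookrightarrow C(\bar\Omega)$ in one dimension); by \eqref{cond v} and continuity of $v$ one has $v(\bart(u(t),t))\ge 0$, hence $\dot u(t)=\chi_{\{u(t)\in(0,L)\}}\,v(\bart(u(t),t))\ge 0$ a.e. Thus $u$ is nondecreasing with $u(t)\ge u_0>0$. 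Arguing by contradiction, I would assume $u(t)<L$ for all $t>0$, so that $u(t)\in[u_0,L)\subset\Omega$, $\dot u=v(\bart(u(t),t))$, and $\int_0^{+\infty}\dot u\,\mathrm dt=\lim_{t\to+\infty}u(t)-u_0\le L-u_0<\infty$.

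Next I would test \eqref{heat eq} with $\psi=\bart(\cdot,t)$: using the chain rule in the Gelfand triple $H^1_0\subset L^2\subset H^{-1}$ (see \cite[Lemma~II.5.9]{Boyer}) together with $0\le\bart(u(t),t)\le\theta_c$ and $\dot u\ge0$, this yields $\tfrac12\dert\|\bart\|_2^2+K\|\bart_s\|_2^2\le\theta_c\,\dot u$ a.e., and integrating over $(0,+\infty)$ gives the key bound $\int_0^{+\infty}\|\bart_s(t)\|_2^2\,\mathrm dt\le\tfrac1K\bigl(\tfrac12\|\bart_0\|_2^2+\theta_c(L-u_0)\bigr)=:M_0<\infty$. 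I would then exploit the one-dimensional estimate $\bart(u(t),t)\le c_L\|\bart_s(t)\|_2$: picking $\rho\in(0,\theta_c)$ with $v\ge v(0)/2>0$ on $[0,\rho]$, the set $S:=\{t>0\colon\|\bart_s(t)\|_2\le\rho/c_L\}$ has infinite measure (its complement has measure at most $(c_L/\rho)^2M_0$ by Chebyshev's inequality), while $\dot u\ge v(0)/2$ on $S$. Hence $L-u_0\ge\int_0^{+\infty}\dot u\,\mathrm dt\ge\tfrac{v(0)}{2}\,|S|=+\infty$, a contradiction. Therefore $u$ reaches $L$ at a finite time, and I take $t^*$ to be the smallest such time (it exists by continuity of $u$), so $u(t^*)=L$ and $u<L$ on $[0,t^*)$.

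For the decay statement I would first observe that $u\equiv L$ on $[t^*,+\infty)$: the factor $\chi_{\{u(t)\in(0,L)\}}$ in \eqref{usimuove} makes $\dot u$ vanish as soon as $u$ hits $L$, and since $u$ is nondecreasing it cannot drop back. Consequently the right-hand side of \eqref{heat eq} vanishes for $t>t^*$, i.e.\ $\langle\dot\bart,\psi\rangle+K\mint\bart_s\psi_s\,\mathrm dx=0$ for all $\psi\in H^1_0(\Omega)$ and a.e.\ $t>t^*$. Testing again with $\psi=\bart(\cdot,t)$ and applying the Poincaré inequality $\|\bart_s\|_2^2\ge(\pi/L)^2\|\bart\|_2^2$ gives $\dert\|\bart\|_2^2\le-2K(\pi/L)^2\|\bart\|_2^2$, and Gronwall's lemma yields $\|\bart(t)\|_2\le e^{-K(\pi/L)^2(t-t^*)}\,\|\bart(t^*)\|_2\to0$ as $t\to+\infty$, which is the claimed convergence.

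I expect the route itself to be straightforward once one commits to it; the only delicate points are (a) making sense of the interface temperature $\bart(u(t),t)$ and of its pointwise two-sided bound from Proposition~\ref{MacPrin}, which is routine in one space dimension, and (b) the elementary trapping argument that freezes $u$ at the boundary after time $t^*$, which rests on reading $\dot u=\chi_{\{u(t)\in(0,L)\}}v(\bart(u(t),t))$ off \eqref{usimuove} and on $v$ having a fixed sign by the maximum principle. The conceptual core is the interplay between global monotonicity of $u$ and the $L^1$-in-time control of $\|\bart_s\|_2^2$: together they force the interface speed to stay bounded away from zero on a time set of infinite measure, which is incompatible with $u$ being bounded.
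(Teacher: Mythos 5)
Your proof is correct, and it takes a genuinely different route from the paper's at the crucial step (finite-time arrival of the interface at the boundary). Both arguments are by contradiction and both pivot on the observation that $\dot u\ge 0$ and $\int_0^{\infty}\dot u\,\mathrm dt\le L-u_0<\infty$ if $u$ never reaches $L$; but the paper extracts a contradiction by showing that $\|\bart\|_2^2$ must simultaneously be small (via an ODE comparison using the Poincar\'e inequality and the smallness of $\int_{t_0}^{\infty}\dot u$) and bounded below by a fixed positive constant (via the H\"older-$\frac12$ continuity of $\bart(\cdot,t)$, which forces a region of length $\geq c\,\theta_c^2$ where $\bart\geq\theta_c/4$ once the interface temperature has climbed to $3\theta_c/4$). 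That lower bound requires the uniform-in-time $H^1_0$ estimate of Proposition~\ref{regolare}, so the paper's argument genuinely leans on the extra regularity theory. You instead integrate the energy identity globally in time to get $\|\bart_s\|_2^2\in L^1(0,\infty)$, apply Chebyshev to produce a set $S$ of infinite measure on which $\|\bart_s\|_2$, hence the interface temperature $\bart(u(t),t)\le c_L\|\bart_s(t)\|_2$, is small, and conclude $\dot u\ge v(0)/2$ on $S$, contradicting $\int_0^\infty\dot u<\infty$. This bypasses Proposition~\ref{regolare} entirely and in fact only uses $\bart_0\in L^2$ together with Proposition~\ref{MacPrin}, so it proves the statement under weaker hypotheses and with less machinery; what it does not give (and the paper's route does not give either, since both are soft contradiction arguments) is a quantitative bound on $t^*$. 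The trapping of $u$ at $L$ after $t^*$ via the cut-off $\chi_{\{u(\tau)\in\Omega\}}$ in \eqref{usimuove} and the subsequent exponential decay from the homogeneous heat equation coincide with the paper's conclusion, with your version slightly more explicit about the rate.
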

\begin{proof}
We assume without loss of generality that $\theta_c>0$ and prove the theorem for the case where $0\leq \bart_0(s)\leq \theta_c$ for every $s\in\Omega$. The other case can be proved similarly. We remark that thanks to Proposition \ref{MacPrin} and \eqref{cond v} we have $0\leq \dot u(t)\leq \max_{s\in[0,\theta_c]}v(s)$ for almost every $t\geq 0$, so that $u$ is non-decreasing and bounded. Therefore $\lim_{t\to\infty} u(t)$ exists. Suppose now that there exists no $t^*<\infty$ such that $u(t^*)=L$. Then, for every $\eps>0$ there exists $t_0$ such that $$
\int_{t_0}^\infty \dot u(\tau)\,\mathrm{d}\tau \leq \eps. $$ 
Let us now test \eqref{heat eq} with $\bart$ to obtain
$$
\frac12\dert\|\bart\|^2_2(t) +\|\bart_s\|^2_2(t) = \dot{u}(t)\bart(u(t),t),\qquad\text{a.e. $t\geq 0$}.
$$
Thanks to the Sobolev inequality $\|\psi\|_{C[0,L]}\leq c\|\psi_s\|_2$, which holds for every $\psi\in H^1_0(\Omega)$, by Young's inequality we thus deduce
$$
\frac{d}{d\tau}\|\bart\|^2_2(t_0+\tau) +\|\bart_s\|^2_2(t_0+\tau) \leq c|\dot{u}|^2(t_0+\tau) \leq \hat c|\dot{u}|(t_0+\tau) ,\qquad\text{a.e. $\tau\geq 0$},
$$
for some constants $c,\hat c>0$ and where we made use of $|\dot u|\leq c$. Using once again Sobolev embeddings we have
$$
\frac{d}{d\tau} y(\tau) + c_Py(\tau) \leq \hat c|\dot{u}|(t_0+\tau) ,\qquad\text{a.e. $\tau\geq 0$},
$$
for some $c_P>0$ and where we defined $y(\tau):=\|\bart\|^2_2(t_0+\tau)$. Using the comparison principle for ordinary differential equations we get that
$$
y(\tau)\leq y(0)e^{-c_P \tau} +e^{-c_P \tau}\int_0^\tau e^{c_P \hat\tau} \hat c|\dot{u}|(t_0+\hat \tau)\,\mathrm{d}\hat\tau. 
$$
As $\|\bart\|_2$ is a continuous function of time and as $\|\bart\|_2\leq L^{\frac12}\theta_c$ for a.e. $t\geq 0$, we have $y(0)\leq L\theta_c^2$. Let us now choose $\tau$ such that $L\theta_c^2e^{-c_0 \tau}\leq \eps$, and such that $\bart(u(t_0+\tau),t_0+\tau) \geq \frac{3\theta_c}{4}$. This is possible because, as $u\in(0,L)$ for each $t\geq0$, $\dot u=0$ if and only if $\bart(u(t),t) = \theta_c$, and because $v(s)>0$ if $0\leq s<\theta_c$ (cf. \eqref{cond v}). Indeed
$$
\eps \geq \int_{t_0}^{\infty}\dot{u}(\tau)\,\mathrm{d}\tau
\geq \min\Bigl\{v(s)\colon s\in\Bigl[0,\frac{3\theta_c}{4}\Bigr]\Bigr\}\mathscr{L}\Bigl(\Bigl\{t\in[t_0,\infty)\colon \bart(u(t),t)\in \Bigl[0,\frac{3\theta_c}{4}\Bigr] \Bigr\}\Bigr),
$$
that is $\bart(u(t),t)\notin \Bigl[\frac{3\theta_c}{4},\theta_c\Bigr]$ only for a finite amount of time. Thus, as we are assuming that the interface position stays in $(0,L)$ for infinite time, such a value of $\tau$ exists. %Furthermore, as $\bart\in L^\infty(0,T;H^1_0(\Omega))$ we can also assume without loss of generality that $\|\bart(\cdot,\tau)\|_{H^1}\leq c$. 
We obtain
\beq
\label{epssmall}
y(\tau)\leq \eps + \hat c\int_0^\tau |\dot{u}|(t_0+\hat \tau)\,\mathrm{d}\hat\tau \leq (1+\hat c) \eps. 
\eeq
On the other hand, there exists $\tilde{c}>0$ such that for almost every $t\geq 0$ we have
\beq
\label{holderiano}
|\bart(u(t),t)-\bart(x(t),t)|\leq \tilde c\|\bart_s\|_2(t) |u(t)-x(t)|^{\frac12}. 
\eeq
Proposition \ref{regolare} also implies that $\bart\in L^\infty(0,T;H^1_0(\Omega))$, and we can therefore assume without loss of generality that $\|\bart_s\|_2(t_0+\tau)\leq \hat \alpha$, for some constant $\hat\alpha>0$. 
Let us now consider
$$\mathcal{D}_\tau := \Bigl\{s\in (0,u(t_0+\tau))\colon \bart(s,t_0+\tau)=\frac{\theta_c}4 \Bigr\}.$$
This set is closed, and non-empty given the fact $\bart(\cdot,t_0+\tau)$ is continuous, that $\bart(0,t_0+\tau)=0$ and that $\bart(u(t_0+\tau),t_0+\tau)>\frac{3\theta_c}{4}$. Let us hence choose $x(\tau)=\max{\mathcal D_\tau}$ so that \eqref{holderiano} becomes
$$
\frac{\theta_c}2 = |\bart(u(t_0+\tau),t_0+\tau)-\bart(x(\tau),t_0+\tau)|\leq \tilde c\hat\alpha |u(t_0+\tau)-x(\tau)|^{\frac12}. 
$$
This yields
$$
\|\bart\|^2_2(t_0+\tau)= \mint |\bart(s,t_0+\tau)|^2\,\mathrm{d}s\geq \frac{\theta^2_c}{16}|u(t_0+\tau)-x(\tau)|\geq \frac{\theta^4_c}{64\tilde c\hat\alpha}.
$$
Combining this inequality with \eqref{epssmall}, by choosing $\eps$ small enough, we are led to a contradiction. Therefore, for every $t\geq t^*$ \eqref{heat eq} simplifies to a standard homogeneous heat equation with initial datum in $H^1_0(\Omega)$, for which convergence to the only equilibrium, namely $0$, is a well-known result (see e.g., \cite{Evans}). 
\end{proof}

\textbf{Acknowledgements:} {This work was supported by the Engineering and Physical Sciences Research Council [EP/L015811/1]. The author would like to thank John Ball for his helpful suggestions and feedback which greatly improved this work. The author would like to acknowledge the anonymous reviewer for improving this paper with his comments.}
\bibliographystyle{plain}
\footnotesize
\bibliography{biblio}

\end{document}